\documentclass[11pt]{amsart}
\usepackage{enumitem}
\usepackage{color}
\usepackage{amssymb,verbatim}
\usepackage{mathrsfs}
\usepackage{dsfont}
\usepackage[colorlinks,citecolor=blue,urlcolor=black,linkcolor=black]{hyperref}
\usepackage[mathcal]{eucal}

\newtheorem*{MAIN}{Main Theorem}
\newtheorem*{conjecture*}{Conjecture}
\newtheorem{theorem}{Theorem}[section]
\newtheorem*{theorem*}{Theorem}
\newtheorem*{definition*}{Definition}
\newtheorem{prop}[theorem]{Proposition}
\newtheorem{conjecture}[theorem]{Conjecture}
\newtheorem{claim}{Claim}[theorem]

\newtheorem{lemma}[theorem]{Lemma}
\newtheorem{cor}[theorem]{Corollary}

\newtheorem{question}{Question}
\newtheorem*{question*}{Question}
\newtheorem{fact}[theorem]{Fact}

\makeatother

\theoremstyle{definition}
\newtheorem{definition}[theorem]{Definition}
\newtheorem{notation}[theorem]{Notation}

\newtheorem*{IH}{Induction Hypothesis}

\theoremstyle{remark}
\newtheorem{remark}[theorem]{Remark}

\hyphenation{counter-examples}

\def\s{\subseteq}

\def\forces{\Vdash}
\def\br{\blacktriangleright}


\newcommand{\one}{\mathop{1\hskip-3pt {\rm l}}} 
\newcommand{\Ult}{\mathrm{Ult}}
\renewcommand{\restriction}{\mathbin\upharpoonright}    	
\renewcommand{\mid}{\mathrel{|}\allowbreak}

\newcommand{\Col}{\mathop{\mathrm{Col}}}
\newcommand{\Cn}{\mathop{C^{(n)}}}

\newcommand\old[1]{}

\DeclareMathOperator{\rank}{rank}

\DeclareMathOperator{\supp}{supp}

\DeclareMathOperator{\crit}{crit}

\DeclareMathOperator{\dom}{dom}
\DeclareMathOperator{\ran}{ran}

\DeclareMathOperator{\otp}{otp}

\DeclareMathOperator{\acc}{acc}

\DeclareMathOperator{\cf}{cf}

\DeclareMathOperator{\ord}{Ord}

\DeclareMathOperator{\hod}{HOD}
\DeclareMathOperator{\id}{id}

\newcommand{\ZFC}{\mathrm{ZFC}}

\newcommand{\Ord}{\mathrm{Ord}}

\newcommand\ale[1]{\marginpar{Alejandro: #1}}

\title[Axiom $\mathcal{A}$ and supercompactness]{Axiom $\mathcal{A}$ and supercompactness}

\subjclass[2020]{Primary 03E35; Secondary 03E04}
\keywords{Supercompactness, $C^{(1)}$-supercompactness, Identity crises phenomena in the large cardinal hierarchy, Prikry-type forcings.}
\thanks{The author acknowledges support from the Department of Mathematics at Harvard University as well as from the Harvard Center of Mathematical Sciences and Applications. This version is from June 6, 2024. Updates may be posted in the authors's webpage \url{www.alejandropovedaruzafa.com}}

\author[Poveda]{Alejandro Poveda}
\address{Harvard University, Department of Mathematics and Center of Mathematical Sciences and Applications, Cambridge (MA), 02138, USA}
\email{alejandro@cmsa.fas.harvard.edu}
\urladdr{https://scholar.harvard.edu/apoveda/home}
\urladdr{www.alejandropovedaruzafa.com}

\begin{document}

\begin{abstract}
   We produce a model where every supercompact cardinal is $C^{(1)}$-supercompact with inaccessible targets. This is a significant improvement of the 
   main identity-crises configuration obtained in \cite{HMP} and provides a definitive answer to a question of Bagaria \cite[p.19]{Bag}. This configuration is a consequence of a new axiom we introduce --called  $\mathcal{A}$-- which is showed to be compatible with Woodin's $I_0$ cardinals.  
  We also   answer a question of V. Gitman and G. Goldberg on the relationship between supercompactness and cardinal-preserving extendibility. As an incidental result, we prove a theorem suggesting that supercompactness is the strongest large-cardinal notion preserved by Radin forcing.   
\end{abstract}

\maketitle

\tableofcontents

\section{Introduction}\label{Introduction}
In his pioneering and  influential work \cite{MagSuper}, Magidor discovered the so-called \emph{identity crises phenomenon} for the classes of measurable, strong compact and supercompact cardinals. An uncountable cardinal $\kappa$ is called \emph{measurable} if there is a $\kappa$-complete ultrafilter over $\kappa$.  Similarly, $\kappa$ is called \emph{strongly compact} (\emph{supercompact}) if for each $\lambda\geq \kappa$ every $\kappa$-complete filter over $\mathcal{P}_\kappa(\lambda):=\{x\s \lambda\mid |x|<\kappa\}$ extends to a $\kappa$-complete (normal) ultrafilter. It is clear 
that every supercompact  is strongly compact and that the latter are measurable. Standard arguments also show that every supercompact is a limit of measurables. 
Way more elusive is whether a supercompact cardinal (resp. a strongly compact) has to be a limit of strongly compacts (resp. measurables). 
In  his groundbreaking work \cite{MagSuper}, Magidor proved  the consistency (relative to ZFC) of the next  configurations; namely, (1) the first strongly compact is the first supercompact; (2) the first strongly compact is the first measurable. This is known as  the \emph{identity crises phenomenon}.

Ever since the study of the identity crises phenomena  have captivated the attention of prominent set theorist, such as Apter, Cummings, Gitik, Magidor, Shelah and Woodin, among others.  Extending Magidor's original theorem from \cite{MagSuper}, Kimchi and Magidor \cite{KimMag} proved the consistency of every strongly compact cardinal being supercompact, except for certain limits of strong compacts.
Other major works in the area are due to Apter and Shelah \cite{AptSheII, AptShe}, Apter and Gitik \cite{AptGit}, Apter and Cummings \cite{AptCum, AptCum2} and Woodin \cite{CumWoo}. 
More recently, Hayut, Magidor and the author \cite{HMP} studied the identity crises phenomenon at  higher regions of the large cardinal hierarchy; specifically,  in the region comprised between the first supercompact and 
the so-called \emph{Vop\v{e}nka's Principle}. 

\smallskip

Recall that for  $n<\omega$, $C^{(n)}$ denotes the club class of all ordinals $\theta$ such that $V_\theta$ is a $\Sigma_n$-elementary substructure of $V$ (in symbols,  $V_\theta\prec_{\Sigma_n}\mathrm{V}$). 
A cardinal $\kappa$ is  
\emph{$C^{(n)}$-supercompact} if for all $\lambda>\kappa$ there is an elementary embedding $j\colon V\rightarrow M$ with $\crit(j)=\kappa$, $\lambda<j(\kappa)$, $M^\lambda\s M$ and $j(\kappa)\in C^{(n)}$ \cite{Bag}.   $C^{(0)}$-supercompact cardinals are exactly the usual \emph{supercompact} cardinals. 

Answering a question of Bagaria from \cite[\S5]{Bag},  in \cite{HMP} it was showed that, for each $n<\omega$, the first supercompact cardinal can be the first $C^{(n)}$-supercompact. 
A key component of the proof in \cite[\S3]{HMP} 
is that $C^{(n)}$-supercompat cardinals are preserved by certain products of Prikry-type forcings.  
Unlike with supercompacts or strongly compacts (see \cite[\S6]{Gitik-handbook}), 
$C^{(n)}$-supercompacts are substantially much harder to preserve. 
We refer  to the last section of \cite{HMP} for a thorough discussion on this matter. 

\smallskip

The preservation argument provided in \cite[\S3]{HMP} depends upon  a fairly peculiar feature of Prikry forcing; namely, that generic filters can be constructed  explicitly  via iterated ultrapowers. In fact, no other method is known to preserve $C^{(1)}$-supercompacts.  The downside of this is that the analysis of \cite[\S3]{HMP} is not exportable to asses other plausible configurations for the class of $C^{(n)}$-supercompacts. 
For instance, the strategy of  \cite[\S3]{HMP} fails (dramatically) to make the first two supercompact cardinals be $C^{(1)}$-supercompact.  This is a consequence of a well-known fact  about  Prikry forcing -- namely, that this poset kills all the strongly compacts below the cardinal where the Prikry sequence has been added. As a result neither \cite{HMP} nor the technologies developed so far (e.g. \cite{AptShe, AptCum, AptCum2}) were  amenable to produce the equivalence between supercompactness and $C^{(1)}$-supercompactness. In this paper we pursue a completely different approach. Instead of preserving $C^{(1)}$-supercompact cardinals, first,  we isolate a new axiom --called \emph{$\mathcal{A}$}-- and show that  
it provides the sought one-to-one equiva\-lence. Second, 
 we prove  that $\mathcal{A}$ 
is compatible with  very large cardinals -- specifically, with a proper class of supercompact cardinals. 

\begin{MAIN}\label{MainTheorem}
Suppose the $\rm{GCH}$ holds and that suitable large cardinals exist. Then there is a model of $\mathrm{ZFC}+\mathcal{A}$ with a stationary class of supercompacts. Thus, in this model every supercompact cardinal is $C^{(1)}$-supercompact.
\end{MAIN}

In fact we show that axiom $\mathcal{A}$  is compatible with \emph{Woodin's axiom $I_0$} -- this  entails the existence of much stronger large cardinals than those claimed above  \cite[\S24]{Kan}. 
One  thus infers that  the equivalence between supercompactness and $C^{(1)}$-supercompactness is not precluded  by  the strongest known large cardinal axiom (compatible with the Axiom of Choice). 

\smallskip

The study of axiom $\mathcal{A}$ as well as other  proposed strengthenings (see \S\ref{SectionOpenquestionsaboutW})  has set-theoretic interests on its own. For instance,  $\mathcal{A}$ is intimately connected to the following natural inner-model-theoretic question: 

\begin{question*}
    Suppose $j\colon V\rightarrow M$ is an elementary embedding with $\crit(j)=\kappa$, $M^\kappa\s M$ and $j(\kappa)$ a strong limit  cardinal. Must it be that $\kappa$ is superstrong with target $j(\kappa)$ (possibly witnessed by a different embedding)?
\end{question*}
The above inquires whether the class of \emph{superstrong cardinals} coincides with the \emph{tall cardinals} with strong limit  targets.  This in turn is inspired by the known  equivalence between \emph{strong} and \emph{tall cardinals} in iterable inner models  $L[E]$  proved by Fernandes and Schindler in \cite{FernandesSchidler}. The \emph{Extender Embedding Axiom} (EEA) --due to Woodin (2018)-- postulates an affirmative answer to the above-mentioned question.  
Moreover, Woodin has conjectured that EEA holds in his canonical inner model  for supercompactness \cite{WooBookSuperscompacts}. 
\begin{conjecture*}[Woodin]
$``\mathrm{ZFC}+V=\mathrm{Ultimate-}L$'' proves $\mathrm{EEA}$.
\end{conjecture*}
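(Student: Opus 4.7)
The plan is to derive EEA from three features expected of Ultimate-$L$: a canonical fine-structural extender sequence $\vec{E}$, full iterability of the resulting mice, and a Universality Theorem asserting that sufficiently closed extenders derivable from ambient embeddings are coherent with $\vec{E}$. This would lift the Fernandes--Schindler theorem --- that in $L[E]$ tall cardinals with strong limit targets are strong --- up to the supercompact level. Fix $j\colon V\to M$ with $\crit(j)=\kappa$, $M^\kappa\s M$, and $\lambda:=j(\kappa)$ a strong limit cardinal; the goal is to produce $j'\colon V\to M'$ with $\crit(j')=\kappa$, $j'(\kappa)=\lambda$, and $V_\lambda\s M'$.

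First I would form the derived $(\kappa,\lambda)$-extender
\[
F=\langle F_a:a\in[\lambda]^{<\omega}\rangle,\qquad F_a=\{X\s[\kappa]^{|a|}: a\in j(X)\},
\]
and let $k\colon V\to N:=\ult(V,F)$ be its ultrapower, so $k(\kappa)=\lambda$. The $\kappa$-closure of $M$ transfers to $N$, but one does not automatically have $V_\lambda\s N$, so $F$ need not yet witness superstrength. The crucial move is to invoke the structure of Ultimate-$L$: under $V=\mathrm{Ultimate\text{-}}L$, the extender sequence $\vec{E}$ is rich and coherent enough that $F$ should, after comparison, be certified by an extender $E_\alpha$ on $\vec{E}$ with critical point $\kappa$. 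The strong limit hypothesis on $\lambda$ is precisely what guarantees that no fine-structural obstruction (such as a partial measure indexed at $\lambda$) blocks the appearance of a full background extender at the required index.

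Second, I would run a coiteration between $V$ and $N$ using $\vec{E}$. Iterability, conjecturally available in Ultimate-$L$, guarantees termination, while the $\kappa$-closure on the $N$-side together with the strong-limit nature of $\lambda$ force the critical points used above $\kappa$ to overshoot $\lambda$. Hence the final comparison map realizes all of $V_\lambda$ inside an iterate of $N$, and extracting the derived $\lambda$-extender from this composite embedding yields the desired $j'$ witnessing the superstrength of $\kappa$ with target $\lambda$.

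The main obstacle is foundational rather than computational: Ultimate-$L$ remains conjectural, and the required Universality Theorem for $\vec{E}$ is precisely the kind of statement one expects only from the full development of the \emph{Ultimate-$L$ programme}. Concretely, the sharpest technical difficulty is the \emph{absorption step} --- arguing that an externally given $\kappa$-closed extender with strong-limit target must, after comparison, coincide with an extender on $\vec{E}$. This is the supercompact-level analogue of the key lemma of Fernandes--Schindler, and its verification appears to require both a satisfactory construction of Ultimate-$L$ in the presence of a proper class of supercompacts and a proof of its iterability at that level; short of these, the outline above can at best be promoted to a conditional theorem.
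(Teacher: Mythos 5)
There is a fundamental mismatch here: the statement you are trying to prove is not a theorem of the paper at all, but an open conjecture of Woodin, which the paper merely records and in fact lists again verbatim among its open questions (``Does $V=\mathrm{Ultimate-}L$ imply $\mathrm{EEA}$?''). The paper offers no proof, and the author explicitly states that it is unclear whether EEA is even consistent with a supercompact cardinal. Your write-up is candid that it is only a conditional outline, and that is exactly the problem: every load-bearing step --- the existence of a canonical iterable extender sequence $\vec{E}$ for Ultimate-$L$ at the supercompact level, the comparison/coiteration machinery, and above all the ``absorption'' step certifying that the derived $(\kappa,\lambda)$-extender $F$ is (after comparison) an extender on $\vec{E}$ with full $V_\lambda$ in its ultrapower --- is precisely the unproven content of the Ultimate-$L$ programme. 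Nothing in the proposal reduces the conjecture; it restates it in the language of a hoped-for Fernandes--Schindler-style argument one level up, where no fine-structure theory currently exists.

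A further point shows the absorption step cannot be soft. The paper proves (Theorem~\ref{AprecludesEEA} together with Theorem~\ref{ConsistencyofA}) that axiom $\mathcal{A}$ plus a supercompact is consistent relative to large cardinals and refutes EEA; in those models one has $\kappa$-closed tall embeddings with strong limit (indeed inaccessible) targets $\lambda$ for which $\kappa$ is \emph{not} superstrong with target $\lambda$ witnessed in the way you hope to extract. So your claim that ``the strong limit hypothesis on $\lambda$ is precisely what guarantees that no fine-structural obstruction blocks the appearance of a full background extender'' cannot follow from the closure and cardinal-arithmetic features you invoke; it would have to be extracted from the specific fine structure of Ultimate-$L$, which is exactly what is conjectural. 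In short, the statement remains open, the paper does not prove it, and your proposal, while a reasonable sketch of how one might eventually attack it, contains no step that is currently provable and so does not constitute a proof.
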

In contrast, in this paper we  prove two things; namely, (1) axiom $\mathcal{A}$ is consistent with Woodin's axiom $I_0$; (2) the theory $``\mathcal{A}+$There is a supercompact cardinal''  disproves EEA (modulo ZFC). If Woodin's prospects are true, our \textbf{Main Theorem}  provides  
 a fairly exotic model for supercompactness. In \S\ref{sec: ultimate L} we briefly discuss the divergences between $\mathcal{A}$ and $``V=\mathrm{Ultimate-}L$''.




\medskip

In the second part of the manuscript we analyze the relationship between supercompactness and cardinal-preserving extendibility. A cardinal $\kappa$ is said to be \emph{cardinal-preserving extendible}  if for all $\lambda\geq \kappa$ there is an elementary embedding $j\colon V_\lambda\rightarrow M$ with $\crit(j)=\kappa$,  $j(\kappa)>\lambda$ and $\mathrm{Card}^M=\mathrm{Card}\cap M.$  This  notion is due to V. Gitman and J. Osinski. If $j\colon V_\lambda\rightarrow M$ is as above, clearly,  $j(\kappa)$ is a limit cardinal in  $V$. Also, 
every extendible cardinal (see \cite[\S23]{Kan}) is cardinal-preserving extendible. 

\smallskip

In private communication \cite{GG} V. Gitman and G. Goldberg posed us  the following question:
\begin{question*}
Is every supercompact  a  cardinal-preserving extendible?  
\end{question*}
We answer the above  in the negative. Specifically the following is proved: 
\begin{theorem*}
If there is a supercompact cardinal then there is a model  where the first supercompact is not cardinal-preserving extendible. 
\end{theorem*}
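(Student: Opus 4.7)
The plan is to extract, via forcing, a model in which the first supercompact cardinal $\kappa$ witnesses a failure of cardinal-preserving extendibility. The crux is that cardinal-preserving extendibility of $\kappa$, paired with sufficient control over cardinal arithmetic, reflects upward the statement ``$\kappa$ is supercompact'', producing a supercompact strictly above $\kappa$ and contradicting minimality.

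First I would replace the ambient universe by a model $V$ in which $\kappa$ is the first supercompact and $\mathrm{GCH}$ holds at and above $\kappa$. Starting from a single supercompact, this is achieved by taking the first supercompact and then performing Laver's preparation followed by a standard $\kappa$-directed-closed Easton-style iteration forcing $\mathrm{GCH}$ above $\kappa$; these preparations preserve supercompactness of $\kappa$ and do not introduce new supercompacts below $\kappa$.

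Next I would assume toward contradiction that $\kappa$ is cardinal-preserving extendible in $V$. Choose $\lambda\in C^{(2)}$ large enough that $V_\lambda\models\mathrm{ZFC}$, $V_\lambda\models$ ``$\kappa$ is the first supercompact'', and $\mathrm{GCH}$ holds throughout $V_\lambda$. Fix a cardinal-preserving extendibility embedding $j\colon V_\lambda\to M$ with $\crit(j)=\kappa$, $j(\kappa)>\lambda$, and $\mathrm{Card}^M=\mathrm{Card}\cap M$. Elementarity yields $M\models$ ``$j(\kappa)$ is supercompact''. The heart of the argument is then to transfer supercompactness of $j(\kappa)$ from $M$ to $V$: using $\mathrm{GCH}$ and the coincidence of cardinals, one shows that $|\mathcal{P}_{j(\kappa)}(\mu)^M|=|\mathcal{P}_{j(\kappa)}(\mu)^V|=\mu^+$ for each relevant $V$-cardinal $\mu\geq j(\kappa)$, and that normal fine $M$-measures witnessing supercompactness of $j(\kappa)$ remain normal and fine in $V$, since $\mathrm{GCH}$ plus cardinal-preservation supplies enough absoluteness of covering-style statements between $M$ and $V$. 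This yields a supercompact $j(\kappa)>\kappa$ in $V$, contradicting the minimality of $\kappa$.

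The main obstacle is the $M$-to-$V$ descent of supercompactness in the third step: cardinal-preservation is a cardinality statement, and converting it into the set-theoretic identification of the relevant powersets --- enough to reinterpret an $M$-ultrafilter as a $V$-ultrafilter --- is delicate and likely requires the preparation forcing to be augmented to secure a diamond-like absoluteness principle above $\kappa$. A cleaner alternative is to argue abstractly that cardinal-preserving extendibility of $\kappa$ under $\mathrm{GCH}$ entails a weak form of $C^{(1)}$-extendibility (by showing that $j(\kappa)$ is a strong limit cardinal in $V$ and that the target $M$ computes $V$-level supercompactness correctly on a cofinal class of targets), whence the classical fact that $C^{(1)}$-extendibles are limits of supercompacts produces the same contradiction.
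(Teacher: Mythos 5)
Your argument has a structural flaw that precedes the technical difficulties you acknowledge: producing a supercompact cardinal $j(\kappa)>\kappa$ in $V$ does \emph{not} contradict the minimality of $\kappa$. ``First supercompact'' means \emph{least} supercompact; supercompacts above it are entirely compatible with that, and your preparation only rules out new supercompacts \emph{below} $\kappa$, which is irrelevant. So even if every transfer step worked, the intended contradiction never materializes. Beyond this, the $M$-to-$V$ descent of supercompactness is a genuine gap, as you concede: $\mathrm{Card}^M=\mathrm{Card}\cap M$ is a statement about which ordinals are cardinals, and no amount of $\mathrm{GCH}$ above $\kappa$ converts that cardinality coincidence into the set-level agreement on $\mathcal{P}_{j(\kappa)}(\mu)$ needed to export an $M$-ultrafilter to a genuine $V$-ultrafilter. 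The ``cleaner alternative'' fares no better: the assertion that cardinal-preserving extendibility of $\kappa$ under $\mathrm{GCH}$ yields enough $C^{(1)}$-extendibility to make $\kappa$ a limit of supercompacts is precisely the kind of strong reflection consequence whose status is the substance of the Gitman--Goldberg question; the only known consequence in this direction (Goldberg) is strong compactness.

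The paper's proof runs along entirely different lines: it \emph{manufactures} a combinatorial obstruction by forcing and then lets cardinal-preservation reflect that obstruction into the region above $\kappa$ where supercompactness forbids it. Concretely, one forces with Radin forcing $\mathbb{R}_u$ for a measure sequence preserving the supercompactness of $\kappa$ (Cummings--Woodin), below a condition whose measure-one set consists of measurable cardinals, so the Radin club $C\subseteq\kappa$ consists of $V$-measurables. By D\v{z}amonja--Shelah, every $\lambda\in C$ of countable cofinality in $V[G]$ carries a $\square^*_\lambda$-sequence, and $\square^*_\lambda$ is absolute to inner models computing $\lambda^+$ correctly. If $j\colon V[G]_{\kappa+1}\rightarrow M$ were a cardinal-preserving extendibility embedding, pick $\lambda\in j(C)$ above $\kappa$ of countable cofinality: elementarity gives $M\models\square^*_\lambda$, cardinal-preservation gives $(\lambda^+)^M=(\lambda^+)^{V[G]}$, hence $\square^*_\lambda$ holds in $V[G]$ at a singular cardinal of cofinality $\omega<\kappa$ --- contradicting the supercompactness of $\kappa$ by Shelah's theorem. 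That use of a forcing-introduced weak square on the Radin club, rather than any upward transfer of supercompactness, is the idea your proposal is missing.
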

The proof of the above  suggests that the strongest large-cardinal notion preserved by Radin forcing is supercompactness (Theorem~\ref{ConsistencyofAdoesnotHold}). Specifically, we show that in the typical Radin extension where a cardinal remains supercompact there are no tall embeddings with a limit cardinal as a target. 

\smallskip

 In \S\ref{Preliminaries} we set notations and provide the reader with some  forcing preliminaries. In \S\ref{WoodinsAxiom} we introduce axiom $\mathcal{A}$ and show that it yields the equivalence between supercompacts and $C^{(1)}$-supercompacts (with inaccessible target). We also show that $\mathcal{A}$ disproves the equivalence betwen superstrong and tall cardinals with strong limit targets postulated by $\mathrm{EEA}$.  The main result of the section is Theorem~\ref{ConsistencyofA} 
which yields the consistency of $\mathcal{A}$ with \emph{Vop\v{e}nka's Principle}. In \S\ref{AandI0} we show that $\mathcal{A}$ is  in fact  compatible with  $I_0$ cardinals. In \S\ref{SectionOpenquestionsaboutW} we  propose a few questions in regards to strengthenings of $\mathcal{A}$.  
Finally, in \S\ref{GitmanGoldbergQ} we answer the  question proposed by Gitman and Goldberg. 

\section{Prelimminaries}\label{Preliminaries}

In this section we garner some definitions and preliminary results that will be used in the proof of our main theorem. We begin with  \S\ref{PreliminariesLargeCardinals} by presenting  a few relevant large cardinal notions.   \S\S\ref{PrelimminariesForcingKurepa} and \ref{SectionCodingPoset} revolve around  two posets instrumental   for the proof of  our main theorem.

\subsection{Some large cardinal notions}\label{PreliminariesLargeCardinals}

Here we collect some relevant large-cardinal notions. For further information on this issue we refer our readers to Kanamori's excellent text on large cardinals \cite{Kan}.
\begin{definition}
     We say that $\kappa$ is  \emph{tall with target $\lambda$} if there is an elementary embedding $j\colon V\rightarrow M$ with $\crit(j)=\kappa$, $M^\kappa\s M$ and $j(\kappa)=\lambda$. A cardinal $\kappa$ is said to be \emph{tall} if $\kappa$ is tall with target $\lambda$ for proper-class many $\lambda>\kappa$.
\end{definition}
 The above notion  admits a natural extender-like characterization; namely,  $\kappa$ is  tall with target $\lambda$ if and only if there is a $(\kappa,\lambda)$-extender $E$ such that $(M_E)^\kappa\s M_E$ and $j_E(\kappa)=\lambda$. For details, see \cite[\S26]{Kan}.
 
 \smallskip
 
 The notion of $C^{(n)}$-tallness is defined analogously by requiring $j(\kappa)$ be a member of the class $C^{(n)}:=\{\theta\in \ord\mid V_\theta\prec_{\Sigma_n} V\}$. Note that $C^{(0)}$ and  $C^{(1)}$ are respectively the class of all ordinals and all  strong limit cardinals. Oftentimes members in $C^{(n)}$ will be referred as \emph{$\Sigma_n$-correct cardinals}. 

 \smallskip

 In this paper we shall be interested in the following version of tallness:
\begin{definition}
	Fix $n<\omega$. A cardinal $\kappa$ is \emph{enhanced $C^{(n)}$-tall}  if for each $\lambda>\kappa$ there is $\theta\in C^{(n)}$ with $\cf(\theta)>\lambda$ such that $\kappa$ is tall with target $\theta$.
\end{definition}

Another important large-cardinal notion is superstrongness:

\begin{definition}
    A cardinal $\kappa$ is called \emph{superstrong with target $\lambda$} if there is an elementary embedding $j\colon V\rightarrow M$ with $\crit(j)=\kappa$, $j(\kappa)=\lambda$ and $V_{\lambda}\s M$. A cardinal $\kappa$ is \emph{superstrong} if it is superstrong with target $\lambda$ for some $\lambda$.
\end{definition}
As before, the superstrongness of a cardinal $\kappa$ can be   characterized via the existence of a $(\kappa,\lambda)$-extender $E$ such that $j_{E}(\kappa)=\theta$ and $V_{\theta}\s M_E$.  

In general we do not require the target model  of a superstrong embedding be closed under $\kappa$-sequences in $V$. Whenever this happens we will say that $\kappa$ is an \emph{enhanced superstrong} cardinal. A key fact about superstrong cardinals with inaccessible targets is the following:
\begin{fact}\label{KeyFactAboutSuperstrong}
    Suppose that $\kappa$ is superstrong with inaccessible target $\lambda$. Then, $\{\tau<\lambda\mid \text{$\kappa$ is superstrong with target $\tau$}\}$ contains a club in $\lambda. $ 
Moreover, $\{\tau<\lambda\mid \text{$\kappa$ is an enhanced superstrong with target $\tau$}\}$ is a ${\geq}\kappa^+$-club.\footnote{A set $C\s \lambda$ is called a \emph{${\geq}\kappa^+$-club} if it is unbounded in $\lambda$ and for every increasing sequence  $\langle \alpha_\xi\mid \xi<\theta\rangle\s C$ with $\theta\in [\kappa^+,\lambda)$ regular, $\sup_{\xi<\theta}\alpha_\xi\in C.$ }
\end{fact}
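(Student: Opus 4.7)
The plan is to derive the superstrong extender $E=(E_a)_{a\in [\lambda]^{<\omega}}$ from a fixed witnessing embedding $j\colon V\to M$ and to reflect superstrongness down to shorter lengths by truncating $E$. For each $\tau\leq\lambda$, write $j_\tau\colon V\to M_\tau=\Ult(V,E\restriction\tau)$ and let $k_\tau\colon M_\tau\to M$ be the canonical factor, so that $k_\tau\circ j_\tau=j$ and $k_\tau\restriction\tau=\id$. I aim to show that for a club of $\tau<\lambda$ the sub-extender $E\restriction\tau$ already witnesses that $\kappa$ is superstrong with target $\tau$; and then to sharpen this on the sub-${\geq}\kappa^+$-cofinality points to obtain the enhanced version.

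For the club in the first part, set $C_0:=\{\tau<\lambda\colon j_\tau(\kappa)=\tau\text{ and }V_\tau\s M_\tau\}$. The first condition amounts to $\tau$ being a \emph{generator closure point} of $E$, namely $j(f)(a)<\tau$ for all $a\in[\tau]^{<\omega}$ and $f\in V$. Because $V_\lambda\s M$ and $\lambda>2^\kappa$, the generators of $E$ are cofinal in $\lambda$ and their closure points form a club in $\lambda$. For the second condition, fix a representation $x\mapsto(a_x,f_x)$ with $j(f_x)(a_x)=x$ for each $x\in V_\lambda$, and define $h\colon\lambda\to\lambda$ by $h(\beta):=\sup\{\max a_x\colon x\in V_{\beta+1}\}$ (this is well defined and bounded below $\lambda$ since $\lambda$ is regular and $|V_{\beta+1}|<\lambda$). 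The closure points of $h$ form a club, and any such $\tau$ forces every $x\in V_\tau$ to have its representing tuple in $[\tau]^{<\omega}\times V$, whence $x\in M_\tau$. The intersection of the two clubs is therefore contained in $C_0$, and on each of these $\tau$'s the sub-extender $E\restriction\tau$ witnesses superstrongness of $\kappa$ with target $\tau$.

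For the second part, I claim that any $\tau\in C_0$ with $\cf(\tau)\geq\kappa^+$ additionally satisfies $M_\tau^\kappa\s M_\tau$, so $E\restriction\tau$ witnesses enhanced superstrongness. Given $\vec x=(x_\alpha\colon\alpha<\kappa)\in V$ with each $x_\alpha\in M_\tau$, write $x_\alpha=j_\tau(f_\alpha)(a_\alpha)$ for some $a_\alpha\in[\tau]^{<\omega}$ and $f_\alpha\in V$. Since $\cf(\tau)>\kappa$, the ordinal $\beta:=\sup\bigcup_{\alpha<\kappa}a_\alpha$ is strictly below $\tau$, so $(a_\alpha\colon\alpha<\kappa)\in V_{\beta+\omega}\s V_\tau\s M_\tau$. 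Simultaneously, $(j_\tau(f_\alpha)\colon\alpha<\kappa)=j_\tau(\alpha\mapsto f_\alpha)\restriction\kappa$ belongs to $M_\tau$. Reassembling the two sequences inside $M_\tau$ produces $\vec x$, as desired. Consequently $C_0\cap\{\tau<\lambda\colon\cf(\tau)\geq\kappa^+\}$ sits inside the enhanced superstrong set; this intersection is unbounded in $\lambda$ (by inaccessibility) and closed under $\theta$-increasing limits with $\theta\in[\kappa^+,\lambda)$ regular, since $C_0$ is a club and the cofinality condition is preserved by such limits, yielding the required ${\geq}\kappa^+$-club.

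The most delicate point is the representation/closure function $h$: phrasing it as the cumulative ordinal map $h(\beta):=\sup\{\max a_x\colon x\in V_{\beta+1}\}$ sidesteps the complication that $V_\tau$ may have cardinality strictly greater than $\tau$ at non-inaccessible $\tau$, because only $\max a_x$ matters and this ordinal always sits below $\lambda$. The cofinality assumption $\cf(\tau)\geq\kappa^+$ in the second part is equally essential: it is what forces the $\kappa$-sequence $(a_\alpha)$ of finite subsets of $\tau$ to have bounded union below $\tau$, and without it the sequence may genuinely escape $V_\tau$ and the $\kappa$-closure of $M_\tau$ could fail.
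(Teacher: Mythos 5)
The paper itself does not prove this fact (it defers to Tsaprounis, \cite[Proposition~2.6 and Corollary~2.10]{TsaPhD}), and your argument is the standard one behind that reference: derive the $(\kappa,\lambda)$-extender $E$ from a witnessing embedding, truncate at $\tau$, and intersect the club of generator-closure points (where $j_\tau(\kappa)=\tau$) with the club of closure points of the representation function (where $V_\tau\subseteq\ran(k_\tau)$); the cofinality computation giving $M_\tau^\kappa\subseteq M_\tau$ when $\cf(\tau)\geq\kappa^+$ is exactly right. Two points are worth making explicit. First, from $V_\tau\subseteq\ran(k_\tau)$ you still owe the (routine) observation that the inverse collapse $k_\tau^{-1}$ is the identity on the transitive set $V_\tau$, which is what actually places $V_\tau$ inside $M_\tau$; likewise, the existence of a representation $x\mapsto(a_x,f_x)$ for \emph{every} $x\in V_\lambda$ should be justified, e.g.\ by fixing a surjection $e\colon\kappa\rightarrow V_\kappa$ so that $j(e)$ maps $\lambda$ onto $V_{j(\kappa)}^{M}=V_\lambda$. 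Second, what your argument establishes in the ``moreover'' part is that the set of enhanced superstrong targets \emph{contains} a ${\geq}\kappa^+$-club, namely (a club subset of) $C_0\cap\{\tau<\lambda\mid\cf(\tau)\geq\kappa^+\}$; it does not show that the full target set is itself closed under increasing $\theta$-limits for regular $\theta\in[\kappa^+,\lambda)$, and no such closure is evident, since the limit of targets witnessed by unrelated extenders need not be a target. This parallels the first half of the statement (which says ``contains'') and is all that is used later (see the proof of Lemma~\ref{prelimminarylemma}), so the discrepancy lies in the literal wording of the Fact rather than in your proof.
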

We refrain to prove the previous fact. For details we invite our readers to consult \cite[Proposition~2.6 and Corollary~2.10]{TsaPhD}.

\smallskip

Other large cardinals considered in the manuscript are \emph{supercompacts}, \emph{cardinal-preserving extendibles},  \emph{$C^{(n)}$-extendibles}, \emph{Vop\v{e}nka cardinals} and \emph{$I_0$ cardinals}. The first two were presented in \S\ref{Introduction} so we are left with the others.

\begin{definition}[Bagaria {\cite{Bag}}]
    A cardinal $\delta$ is called \emph{$\lambda$-$C^{(n)}$-extendible}  for  $\lambda>\delta$ if there is $\theta\in\mathrm{Ord}$ and an elementary embedding $j\colon V_\lambda\rightarrow V_\theta$ such that $\crit(j)=\delta$, $j(\delta)>\lambda$ and $j(\delta)\in C^{(n)}$. A cardinal $\delta$ is called \emph{$C^{(n)}$-extendible} if it is $\lambda$-$C^{(n)}$-extendible for a proper class of $\lambda$.
    \end{definition}
    A cardinal $\delta$ extendible in the classical sense (see \cite{Kan}) if and only if it is $C^{(1)}$-extendible. The next easy observation shows that mild versions of extendibility yield rich superstrong embeddings:
    \begin{prop}\label{Prop1extendibility}
        Suppose that $j\colon V_{\delta+1}\rightarrow V_{j(\delta)+1}$ witnesses $(\delta+1)$-extendi\-bility of $\delta$. Then there is an  embedding $\iota\colon V\rightarrow M$ witnessing that $\delta$ is superstrong with target $j(\delta)$ and $M$ is correct about stationaries of $j(\delta)$.
    \end{prop}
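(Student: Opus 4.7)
My plan is to let $\iota\colon V\to M$ be the ultrapower embedding associated with the $(\delta,j(\delta))$-extender derived from $j$, namely
\[
E=\langle E_a\mid a\in [j(\delta)]^{<\omega}\rangle,\qquad E_a=\{X\subseteq [\delta]^{|a|}\mid a\in j(X)\},
\]
which is well-defined since each $X$ lies in $\dom(j)=V_{\delta+1}$ and each $j(X)$ lies in $V_{j(\delta)+1}$. Set $M=\Ult(V,E)$ and $\iota=j_E$, and let $k\colon M\to V_{j(\delta)+1}$ be the usual factor map $k([a,f]_E)=j(f)(a)$, so that $j\restriction V_{\delta+1}=k\circ (\iota\restriction V_{\delta+1})$.

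The first step is to record the standard parameters of this construction: $\crit(\iota)=\delta$, $\iota(\delta)=j(\delta)$, and $\crit(k)>j(\delta)$. The inaccessibility of $\delta$ (forced by $(\delta+1)$-extendibility via elementarity of $j$) makes $j(\delta)$ a cardinal in $M$; together with the identities $k([\{\alpha\},\id_\delta]_E)=\alpha$ for $\alpha<j(\delta)$ and $k(\iota(\delta))=j(\delta)$, this forces $k$ to move no ordinal below $j(\delta)+1$, hence to be the identity on $V_{j(\delta)+1}^M$ by $\in$-induction.

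Superstrongness then follows from a one-line coherence computation. Because $k$ is the identity on $V_{j(\delta)+1}^M$, the factorisation $j=k\circ \iota$ gives $\iota\restriction V_{\delta+1}=j\restriction V_{\delta+1}$. Applying this to $V_\delta\in V_{\delta+1}$ and using the elementarity of both maps,
\[
V_{j(\delta)}^{M}=V_{\iota(\delta)}^{M}=\iota(V_\delta)=j(V_\delta)=V_{j(\delta)},
\]
so $V_{j(\delta)}\subseteq M$ and $\iota$ witnesses superstrongness with target $j(\delta)$.

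For correctness about stationaries I apply the same principle one level higher: since $\mathcal{P}(\delta)\in V_{\delta+1}$,
\[
\mathcal{P}(j(\delta))^M=\iota(\mathcal{P}(\delta))=j(\mathcal{P}(\delta))=\mathcal{P}(j(\delta))^{V_{j(\delta)+1}}=\mathcal{P}(j(\delta))^V,
\]
the last equality because $V_{j(\delta)+1}$ computes the power set of $j(\delta)$ correctly. Thus $M$ contains every subset of $j(\delta)$ that exists in $V$. Since ``being a club of $j(\delta)$'' is a $\Pi_1$ property absolute between transitive models, the $V$-clubs and the $M$-clubs of $j(\delta)$ coincide, and stationarity of any $S\in \mathcal{P}(j(\delta))$ is therefore absolute between $M$ and $V$. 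The most delicate point I anticipate is the bookkeeping around the factor map $k$; once $\crit(k)>j(\delta)$ is in hand, everything reduces to pushing $\iota$ through sets in $V_{\delta+1}$, which is precisely where the hypothesis that the codomain of the extendibility embedding is exactly $V_{j(\delta)+1}$ is used.
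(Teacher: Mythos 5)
The first half of your argument --- deriving the short $(\delta,j(\delta))$-extender from $j$, computing $\crit(\iota)=\delta$, $\iota(\delta)=j(\delta)$, and $V_{j(\delta)}\subseteq M$ via the factor map --- is correct and standard. The gap is in the stationary-correctness step. You write ``since $\mathcal{P}(\delta)\in V_{\delta+1}$'', but $\mathcal{P}(\delta)$ has rank $\delta+1$, so it lies in $V_{\delta+2}\setminus V_{\delta+1}$ and is \emph{not} in the domain of $j$; the expression $j(\mathcal{P}(\delta))$ is undefined, and the agreement $\iota\restriction V_{\delta+1}=j\restriction V_{\delta+1}$ (which concerns only objects of rank $\le\delta$) gives you no purchase on it. Worse, the conclusion $\mathcal{P}(j(\delta))^M=\mathcal{P}(j(\delta))^V$ is provably false for your $M$: every element of $\mathcal{P}(j(\delta))^M=\iota(\mathcal{P}(\delta))$ has the form $[a,f]_E$ with $a\in[j(\delta)]^{<\omega}$ and $f\colon[\delta]^{|a|}\to\mathcal{P}(\delta)$, so externally $|\mathcal{P}(j(\delta))^M|\le j(\delta)\cdot(2^{\delta})^{\delta}=j(\delta)<2^{j(\delta)}=|\mathcal{P}(j(\delta))^V|$, using that $j(\delta)$ is inaccessible. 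Thus $M$ misses almost all subsets of $j(\delta)$, and the direction that actually matters --- if $M\models$ ``$S$ is stationary'' then $S$ really is stationary in $V$ --- is unsupported, since the $V$-club witnessing non-stationarity of $S$ need not belong to $M$.

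The paper avoids this by enlarging the extender rather than using the short one: its generators range over $\mathcal{I}:=V_{j(\delta)}\cup\mathrm{Cub}_{j(\delta)}$, with measures concentrating on $[V_{\delta+1}]^{|a|}$, so that every club subset of $j(\delta)$ is itself a generator and therefore lands in $M_{\mathcal{E}}$. Having $\mathrm{Cub}_{j(\delta)}\subseteq M_{\mathcal{E}}$ is exactly what stationary correctness requires: any $V$-club disjoint from a given $S\in M_{\mathcal{E}}$ is available inside $M_{\mathcal{E}}$, and conversely every $M_{\mathcal{E}}$-club is a genuine club. To repair your write-up, keep your first two paragraphs but replace the last one by this enlargement of the support of $E$.
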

    \begin{proof}
     Let $\mathcal{I}:=V_{j(\delta)}\cup\mathrm{Cub}_{j(\delta)}$ where $\mathrm{Cub}_{j(\delta)}$ denotes the club filter at $j(\delta)$. 
     
     Let us consider the extender $\mathcal{E}:=\langle E_a\mid a\in [\mathcal{I}]^{<\omega}\rangle$ 
where
$$X\in E_a\;\Longleftrightarrow\; X\s [V_{\delta+1}]^{|a|}\,\wedge\, a\in j(X).$$
As usual, there is a natural projection map $\pi_{ab}$ between $E_a$ and $E_b$ whenever $a\s b$ (see \cite[\S23]{Kan}) so we may form the corresponding extender ultrapower $\iota:=j_{\mathcal{E}}\colon V\rightarrow M_{\mathcal{E}}$. It is routine to check that 
$\iota(\delta)=j(\delta)$ and
 $V_{\iota(\delta)}\cup \mathrm{Cub}_{\iota(\delta)}\s M_\mathcal{E}.$ In particular, $M_{\mathcal{E}}$ is stationary correct at $j(\delta)$.
    \end{proof}
    \begin{definition}\label{CorrectSuperstrong}
Superstrong cardinals $\delta$  as those in Proposition~\ref{Prop1extendibility} will be referred as \emph{stationary-correct superstrong (with target $j(\delta)$)}.
\end{definition}
    We will also consider a strengthening of $\Cn$-extendibility: 
\begin{definition}\label{def: aextendible}
  Fix proper classes $A$ and  $\mathbb{P}$. A cardinal $\delta$ is \emph{$C^{(n)}_{\mathbb{P}}$-$A$-extendible} if for all $\lambda>\delta$ there are $\theta\in\mathrm{Ord}$ and an elementary embedding $$j\colon \langle V_\lambda,\in, A\cap \lambda\rangle\rightarrow \langle V_\theta,\in, A\cap \theta\rangle$$ such that $\crit(j)=\delta$, $j(\delta)>\lambda$ and $j(\delta)\in C^{(n)}_{\mathbb{P}}$ where $C^{(n)}_{\mathbb{P}}$ is the club class
$$\{\mu\in\ord\mid \langle V_\mu,\in,\mathbb{P}\cap V_\mu\rangle\prec_{\Sigma_n}\langle V,\in,\mathbb{P}\rangle\}.$$
\end{definition}

\begin{fact}[{\cite[Lemma~3.16]{BP}}]\label{fact: preserving Cns}
    If $\mathbb{P}$ is an adequate class forcing iteration, $j(\delta)\in C^{(n)}_{\mathbb{P}}$ and the trivial condition of $\mathbb{P}$ forces $``V_{j(\delta)}[\dot{G}_{j(\delta)}]=V[\dot{G}]_{j(\delta)}$'' then   $\text{$\one\forces_{\mathbb{P}} ``j(\delta)\in \dot{C}^{(n)}$''}.$
\end{fact}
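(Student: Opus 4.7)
The plan is to fix an arbitrary $\mathbb{P}$-generic filter $G$ and prove $V[G]_{j(\delta)} \prec_{\Sigma_n} V[G]$, which is exactly the assertion that $j(\delta)\in \dot{C}^{(n)}[G]$. The argument would rest on three ingredients: (i) the definability-of-forcing theorem for adequate class-forcing iterations, which supplies a \emph{uniform} $\Sigma_n$-definition of the relation $p\forces_{\mathbb{P}}\varphi(\vec\tau)$ over $\langle V,\in,\mathbb{P}\rangle$ for each $\Sigma_n$-formula $\varphi$; (ii) the hypothesis $V[G]_{j(\delta)} = V_{j(\delta)}[G_{j(\delta)}]$, which guarantees that every parameter from $V[G]_{j(\delta)}$ admits a name lying in $V_{j(\delta)}$; and (iii) the hypothesis $j(\delta)\in C^{(n)}_{\mathbb{P}}$, used to reflect the forcing relation from $V$ down to $V_{j(\delta)}$.

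Concretely, given a $\Sigma_n$-formula $\varphi(\vec x)$ and parameters $\vec a\in V[G]_{j(\delta)}$, ingredient (ii) supplies names $\vec\tau\in V_{j(\delta)}$ for $\vec a$. By the truth lemma,
$$V[G]\models \varphi(\vec a)\;\Longleftrightarrow\; \exists p\in G\;\; p\forces_{\mathbb{P}}\varphi(\vec\tau).$$
Applying (i), this forcing relation is $\Sigma_n$ over $\langle V,\in,\mathbb{P}\rangle$ with parameters $p,\vec\tau\in V_{j(\delta)}$; combining this with the definition of $C^{(n)}_{\mathbb{P}}$ and invoking (iii), it is equivalent to
$$p\forces^{V_{j(\delta)}}_{\mathbb{P}\cap V_{j(\delta)}}\varphi(\vec\tau).$$
Re-running the truth lemma inside $V_{j(\delta)}$ then yields $V_{j(\delta)}[G_{j(\delta)}]\models \varphi(\vec a)$, and a further appeal to (ii) closes the biconditional. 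This gives the desired $\Sigma_n$-elementarity, and since $G$ was arbitrary, the conclusion that $\one\forces_{\mathbb{P}} ``j(\delta)\in\dot C^{(n)}$''.

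The main obstacle I anticipate is verifying ingredient (i): one must check that the definability of the forcing relation for the class iteration $\mathbb{P}$ yields a $\Sigma_n$-relation \emph{uniformly in $\varphi$}, with no additional class- or set-sized parameters that might fail to reflect to $V_{j(\delta)}$. For ``adequate'' class iterations in the sense of \cite{BP} this is encoded into the definition through the $\ord$-cc together with suitable tameness and approximation properties, and it is precisely the content being quoted from \cite[Lemma~3.16]{BP}; no further bookkeeping beyond that citation should be needed.
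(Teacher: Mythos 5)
First, note that the paper itself offers no proof of this statement: it is imported verbatim as a Fact from \cite[Lemma~3.16]{BP}, so there is no in-paper argument to compare yours against. Your three ingredients are exactly the ones the cited lemma runs on, and your identification of ``adequate'' as the hypothesis securing a uniform, correctly stratified definition of the forcing relation over $\langle V,\in,\mathbb{P}\rangle$ is on target.

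There is, however, one genuine gap in the displayed chain of equivalences. The truth lemma gives you \emph{some} $p\in G$ with $p\forces_{\mathbb{P}}\varphi(\vec\tau)$, but $G$ is generic for the whole class forcing, so this $p$ need not lie in $V_{j(\delta)}$; your next step treats ``$p\forces_{\mathbb{P}}\varphi(\vec\tau)$'' as a statement with parameters $p,\vec\tau\in V_{j(\delta)}$ and reflects it to $\langle V_{j(\delta)},\in,\mathbb{P}\cap V_{j(\delta)}\rangle$, which is not justified as written. What is missing is the argument that the witnessing condition can be taken in $G_{j(\delta)}=G\cap V_{j(\delta)}$ and, for the outer existential quantifier of $\varphi$, that a witnessing \emph{name} can be found in $V_{j(\delta)}$. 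The standard repair is an induction on $n$ via Tarski--Vaught: write $\varphi\equiv\exists y\,\psi(y,\vec{x})$ with $\psi\in\Pi_{n-1}$; observe that for every $p\in G_{j(\delta)}$ the $\Sigma_n$ statement ``$\exists q\leq p\,\exists\sigma\,(q\forces_{\mathbb{P}}\psi(\sigma,\vec\tau))$'' holds in $\langle V,\in,\mathbb{P}\rangle$ and hence reflects to $\langle V_{j(\delta)},\in,\mathbb{P}\cap V_{j(\delta)}\rangle$; deduce that the set of $q\in\mathbb{P}\cap V_{j(\delta)}$ which either force $\psi(\sigma,\vec\tau)$ for some name $\sigma\in V_{j(\delta)}$ or fail the reflected existential is dense in $\mathbb{P}\cap V_{j(\delta)}$ and is an element of $V$, so $G_{j(\delta)}$ meets it; the second alternative is impossible for members of $G_{j(\delta)}$, so you obtain $q\in G_{j(\delta)}$ and $\sigma\in V_{j(\delta)}$ with $q\forces_{\mathbb{P}}\psi(\sigma,\vec\tau)$, hence a witness $\sigma_G\in V_{j(\delta)}[G_{j(\delta)}]=V[G]_{j(\delta)}$; the inductive hypothesis $V[G]_{j(\delta)}\prec_{\Sigma_{n-1}}V[G]$ then closes the Tarski--Vaught criterion. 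With that insertion your outline becomes the proof of \cite[Lemma~3.16]{BP}.
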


\smallskip

In \cite{PovOmega} the author introduced the notion of \emph{almost-$C^{(n)}$-extendibility} aiming to asses whether every $C^{(n)}$-extendible is a limit of $C^{(n)}$-supercompact cardinals. This new large-cardinal concept is defined as follows:
\begin{definition}
  A cardinal $\delta$ is called \emph{almost-$C^{(n)}$-extendible} if for all $\lambda>\delta$, $\delta$ is $\lambda$-supercompact and there is $\mu\in C^{(n)}$ with $\cf(\mu)>\lambda$ such that $\delta$ is superstrong with target $\mu.$
\end{definition}
In \cite[\S2]{PovOmega} it is showed that every almost-$C^{(n)}$-extendible cardinal is $\Sigma_{n+2}$-correct. Its important to observe that, for a given almost-$C^{(n)}$-extendible cardinal, its supercompactness and superstrongness are witnessed by (possibly) different elementary embeddings. In the cases where both properties are witnessed by the same embedding, Tsaprounis \cite{Tsan} has showed that this is equivalent to $C^{(n)}$-extendibility. Exploiting this technical nuance the following were proved in \cite[\S2]{PovOmega}:
\begin{enumerate}
    \item Every almost-$C^{(n)}$-extendible  is $C^{(n)}$-supercompact;
    \item Every almost-$C^{(1)}$-extendible is a limit of supercompacts;
    \item Every $C^{(n)}$-extendible is a limit of almost-$C^{(n)}$-extendibles. 
\end{enumerate}
In particular, every $C^{(n)}$-extendible cardinal is a limit of $C^{(n)}$-supercompacts.

\smallskip

Next we discuss Vop\v{e}nka and almost huge cardinals. A set $X\s V_\kappa$ is called \emph{Vop\v{e}nka in $\kappa$}\label{Vopenka} if for every natural sequence of structures\footnote{A natural sequence of structures is a sequence $\langle \mathcal{M}_\alpha\mid \alpha<\kappa\rangle$  where each $\mathcal{M}_\alpha $ takes the form $\langle V_{f(\alpha)},\in,\{\alpha\},R_\alpha\rangle$ with $\alpha<f(\alpha)\leq f(\beta)<\kappa$ and $R_\alpha\s V_{f(\alpha)}$ \cite[p.336]{Kan}.} $\langle\mathcal{M}_\alpha\mid \alpha<\kappa\rangle$  there are ordinals $\alpha<\beta<\kappa$ and an elementary embedding $j\colon \mathcal{M}_\alpha\rightarrow \mathcal{M}_\beta$ with $\crit(j)\in X$.  An inaccessible cardinal $\kappa$ is called \emph{Vop\v{e}nka} if $\kappa$ is Vop\v{e}nka in $\kappa$. The \emph{Vop\v{e}nka filter} on $\kappa$ is $\mathcal{F}:=\{X\s \kappa\mid \text{$\kappa-X$ is not Vop\v{e}nka in $\kappa$}\}.$ It is not hard to show that  $\kappa$ is Vop\v{e}nka if and only if $\kappa$ is inaccessible and $\mathcal{F}$ is a proper filter. Whenever we refer to Vop\v{e}nka's filter we will always be assuming that it is proper. A classical result in set theory says that $\mathcal{F}$ is a normal filter \cite[Proposition~24.14]{Kan} and in particular it contains the club filter on $\kappa$. In special circumstances $\mathcal{F}$ can be extended to a normal (uniform) measure on $\kappa$. The next lemma pinpoints one of those:
\begin{lemma}\label{lemmaalmoshuge}
    If there is an elementary embedding $j\colon V\rightarrow M$ such that $\crit(j)=\kappa$ and $M^{<j(\kappa)}\s M$ then $\mathcal{F}\s\mathcal{U}$ where $\mathcal{U}:=\{X\s\kappa\mid \kappa\in j(X)\}.$
\end{lemma}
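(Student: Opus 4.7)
The plan is to establish the contrapositive: if $Y\s\kappa$ satisfies $\kappa\in j(Y)$, then $Y$ is Vop\v{e}nka in $\kappa$, which is equivalent to $\mathcal{F}\s\mathcal{U}$ by unwinding the definition of $\mathcal{F}$. So fix $Y$ with $\kappa\in j(Y)$ and a natural sequence $\vec{\mathcal{M}}=\langle\mathcal{M}_\alpha\mid\alpha<\kappa\rangle\in V_\kappa$. We must produce $\alpha<\beta<\kappa$ and an elementary $h\colon \mathcal{M}_\alpha\to\mathcal{M}_\beta$ with $\crit(h)\in Y$. By elementarity of $j$, this reduces to finding, inside $M$, indices $\alpha<\beta<j(\kappa)$ and an elementary $h\colon (j\vec{\mathcal{M}})_\alpha\to(j\vec{\mathcal{M}})_\beta$ with $\crit(h)\in j(Y)$, since then such a configuration reflects down below $\kappa$.

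For this, let $\mathcal{N}_\kappa:=(j\vec{\mathcal{M}})_\kappa$, which belongs to $M$ because $j\vec{\mathcal{M}}\in M$. Recall $\mathcal{N}_\kappa$ has universe $V^M_{j(f)(\kappa)}$ and distinguished constant $\{\kappa\}$, where $j(f)(\kappa)<j(\kappa)$. The idea is to choose $\beta=\kappa$ (or $\mathcal{N}_\kappa$ as the target) and produce an embedding with critical point in $j(Y)$, exploiting that $\kappa\in j(Y)$. The natural candidate for such an embedding originates from $j\restriction\mathcal{N}_\kappa\colon\mathcal{N}_\kappa\to j(\mathcal{N}_\kappa)$, which is elementary with critical point $\kappa$; since $|\mathcal{N}_\kappa|<j(\kappa)$ and each pair $(x,j(x))$ lies in $M$, the closure $M^{<j(\kappa)}\s M$ puts $j\restriction\mathcal{N}_\kappa$ itself into $M$.

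The principal obstacle is that $j(\mathcal{N}_\kappa)$ is \emph{not} literally of the form $(j\vec{\mathcal{M}})_\beta$ for any $\beta<j(\kappa)$, for it occupies index $j(\kappa)$ in the doubly iterated sequence $j^2\vec{\mathcal{M}}$. To bridge this, I would work inside $M$ and run a Löwenheim--Skolem argument on $j(\mathcal{N}_\kappa)$: pick an elementary substructure containing the range of $j\restriction\mathcal{N}_\kappa$ and whose intersection with the ordinals is controlled so that the Mostowski collapse lands on a structure with distinguished constant matching one of the indices $\beta<j(\kappa)$. Matching the rank profile of the collapsed universe against $V^M_{j(f)(\beta)}$ for the appropriate $\beta$, together with the monotonicity of $j(f)$, should identify this collapse with $\mathcal{N}_\beta$. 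Composing the collapse inverse with $j\restriction\mathcal{N}_\kappa$ produces the sought elementary embedding inside $M$ whose critical point is $\kappa\in j(Y)$.

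Having obtained in $M$ an elementary $h\colon\mathcal{N}_\kappa\to\mathcal{N}_\beta$ with $\kappa<\beta<j(\kappa)$ and $\crit(h)=\kappa\in j(Y)$, elementarity of $j$ yields in $V$ an analogous embedding $h'\colon\mathcal{M}_{\alpha'}\to\mathcal{M}_{\beta'}$ with $\alpha'<\beta'<\kappa$ and $\crit(h')\in Y$, completing the argument. Since $\vec{\mathcal{M}}$ was arbitrary, $Y$ is Vop\v{e}nka in $\kappa$, as required. The technical heart of the argument, and thus the main step that warrants careful execution, is the internal Löwenheim--Skolem-and-collapse step that realigns $j(\mathcal{N}_\kappa)$ with a genuine member of the natural sequence $j\vec{\mathcal{M}}$.
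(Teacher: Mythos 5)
Your setup is fine: the contrapositive reformulation is correct, the reduction via elementarity of $j$ to finding an embedding between two members of $j(\vec{\mathcal{M}})$ inside $M$ is correct, and the observation that $j\restriction\mathcal{N}_\kappa$ lies in $M$ by the closure $M^{<j(\kappa)}\s M$ is the right use of almost hugeness. You have also correctly located the obstacle: $j(\mathcal{N}_\kappa)$ sits at index $j(\kappa)$ of $j^2(\vec{\mathcal{M}})$ and is not a member of $j(\vec{\mathcal{M}})$.

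The gap is that the L\"owenheim--Skolem-and-collapse step you propose to bridge this cannot work, and it is precisely where all the difficulty of the lemma lives. You need the target of your embedding to be the \emph{specific} structure $\mathcal{N}_\beta=\langle V_{j(f)(\beta)},\in,\{\beta\},(j\vec{R})_\beta\rangle$ for some $\beta\in(\kappa,j(\kappa))$. Even granting that a hull of $j(\mathcal{N}_\kappa)$ can be arranged to collapse to a rank-initial-segment structure with distinguished constant $\beta$ (already delicate), there is no mechanism forcing the collapsed predicate to equal $(j\vec{R})_\beta$: the predicates attached to the members of a natural sequence are completely arbitrary and bear no relation to $j\bigl((j\vec{R})_\kappa\bigr)$, so a natural sequence can be chosen (e.g.\ with each $R_\alpha$ rigidly coding its own index or a wildly varying well-order of $V_{f(\alpha)}$) that defeats any hull-and-collapse construction. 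Indeed, if such an argument worked, Vop\v{e}nka-ness would follow from hypotheses far weaker than almost hugeness. The paper does not attempt this step at all: it invokes Powell's theorem \cite[Theorem~24.18]{Kan} as a black box, which supplies a set $X\in\mathcal{U}$ such that for \emph{every} natural sequence and all $\alpha<\beta$ in $X$ there is an elementary embedding $\mathcal{M}_\alpha\rightarrow\mathcal{M}_\beta$ with critical point $\alpha$; applying $j$ to this statement yields, inside $M$, a genuine embedding $\mathcal{N}_\kappa\rightarrow\mathcal{N}_\beta$ for $\beta\in j(X)$ above $\kappa$, which is then played against the $M$-natural sequence witnessing $j(F)\in j(\mathcal{F})$. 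To repair your proof, replace the hull-and-collapse step by an appeal to Powell's theorem (or reproduce its proof, which is genuinely more involved than a Skolem-hull argument).
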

\begin{proof}
    By Powell's theorem \cite[Theorem~24.18]{Kan} there is $X\in\mathcal{U}$ such that if $\langle\mathcal{M}_\alpha\mid \alpha<\kappa\rangle$ is a natural sequence of structures and $\alpha<\beta$ are members of $X$ then there is $\iota\colon \mathcal{M}_\alpha\rightarrow\mathcal{M}_\beta$ with $\crit(\iota)=\alpha.$ By elementarity, for each $M$-natural sequence $\langle \mathcal{M}_\alpha\mid \alpha<j(\kappa)\rangle$ and an ordinal $\beta\in j(X)$ with $\kappa<\beta$ there is  an $M$-elementary embedding $\iota\colon \mathcal{M}_\kappa\rightarrow\mathcal{M}_\beta$ with $\crit(\iota)=\kappa$.

    Let $F\in\mathcal{F}$. Then $j(F)$ belongs to the $M$-Vop\v{e}nka filter $j(\mathcal{F})$. That means that there is $\langle \mathcal{N}_\alpha\mid \alpha<j(\kappa)\rangle$ an $M$-natural sequence such that if $\iota\colon \mathcal{N}_\alpha\rightarrow \mathcal{N}_\beta$ is an elementary embedding then $\crit(\iota)\in j(F)$.  Applying the previous observation we conclude that there is an elementary embedding $\iota\colon \mathcal{N}_\kappa\rightarrow\mathcal{N}_\beta$ whose  critical point is $\kappa\in j(F)$. Therefore, $F\in\mathcal{U}$ as needed.
\end{proof}
The assumption employed in the previous lemma is referred in the large-cardinal literature as \emph{almost hugeness.} To grant the above hypothesis as well as some extra properties we will demand a bit more about our $\kappa$:

\begin{definition}\label{def: hugeness}
   A cardinal $\kappa$ is called \emph{huge} if there is an elementary embedding $j\colon V\rightarrow M$ such that $\crit(j)=\kappa$ and $M^{j(\kappa)}\s M$.  
\end{definition}

\smallskip

One of the strongest known large cardinals is \emph{Woodin's axiom $I_0$}:
\begin{definition}[Woodin]\label{DefiningI0}
 $I_0(\lambda)$ denotes the  assertion  ``There is an elementary embedding $j\colon L(V_{\lambda+1})\rightarrow L(V_{\lambda+1})$ with $\crit(j)<\lambda.$''
\end{definition}
To make explicit the critical point of an  $I_0(\lambda)$ embedding $$j\colon L(V_{\lambda+1})\rightarrow L(V_{\lambda+1})$$ we will write $I_0(\delta,\lambda)$ being $\delta:=\crit(j)$. If $I_0(\delta,\lambda)$ holds as witnessed by $j$ we shall denote by $\langle\delta_n\mid n<\omega\rangle$ the \emph{critical sequence of $j$}; namely, the sequence defined by $\delta_0:=\delta$ and $\delta_{n+1}:=j(\delta_n)$ for each $n<\omega$.
\begin{definition}[Woodin]
  For a class $X$ one  says that \emph{$I_0(\delta, \lambda, X)$  holds} if there is an elementary embedding $j\colon L(V_{\lambda+1})\rightarrow L(V_{\lambda+1})$ with $\crit(j)=\delta$ below $\lambda$ and such that $j(X\cap \lambda)=X\cap \lambda$.
\end{definition}


\subsection{A forcing adding a $\kappa$-regressive $\kappa$-Kurepa tree}\label{PrelimminariesForcingKurepa}
Recall that a \emph{tree} is a partially ordered set $\mathbb{T}=(T,<)$ such that for each $t\in T$ the set of predecessors $\mathrm{pred}_{\mathbb{T}}(t):=\{s\in T\mid s<t\}$ is well-ordered by $<$. In a harmless abuse of notation one identifies the tree $\mathbb{T}$ with its underlying universe $T$. For an ordinal $\alpha$, the \emph{$\alpha$th-level level of $T$} is defined to be $$T_\alpha:=\{t\in T\mid \otp(\mathrm{pred}_{\mathbb{T}}(t))=\alpha\}.$$
The collection of  first $\alpha$-many levels $T_{<\alpha}$ is $\{t\in T\mid \otp(\mathrm{pred}_{\mathbb{T}}(t))<\alpha\}$.

The \emph{height of $T$} (denoted $\mathrm{ht}(T)$) is the first ordinal $\alpha$ such that $T_\alpha=\varnothing.$ Finally, a \emph{branch for $T$} is a $\s$-maximal $<$-chain $b\s T$. 

\smallskip

 For the scope of this section let us fix a Mahlo cardinal $\kappa$.\footnote{An inaccessible cardinal that is limit of inaccessibles suffices.} We shall consider $\kappa$-trees on $\kappa$; namely, trees $T\s\kappa$ such that $\mathrm{ht}(T)=\kappa$ and $|T_\alpha|<\kappa$ for all $\alpha<\mathrm{ht}(T)$. The following definition --modulo a slight tweak-- is borrowed from  K\"onig and Yoshinobu's paper \cite{KY}:
 \begin{definition}\label{kurepaforcing}
     A $\kappa$-tree $T$ is said to be a \emph{$\kappa$-regressive $\kappa$-Kurepa tree} if it has $\kappa^+$-many branches and for each inaccessible cardinal $\delta<\kappa$ there is a regressive map $f\colon T_\delta\rightarrow T_{<\delta}$; namely, a map satisfying the following clauses:
     \begin{enumerate}
         \item   $f(x)<_Tx$ for all $x\in T_\delta$;
         \item if $x,y\in T_\delta$ are $<_T$-incompatible then $x\wedge y<_T f(x), f(y)$ where 
         $$x\wedge y:=\sup_{<_T}\{z\in T\mid z<_T x\,\wedge\, z<_T y\}.$$
     \end{enumerate}
 \end{definition}
 \begin{remark}
     The only (yet crucial) difference with the notion in \cite{KY} is that here we require the regressive maps to exist only at inaccessible levels of  $T$. This will be important to ensure that the natural forcing  introducing a $\kappa$-regressive $\kappa$-Kurepa tree is \emph{almost $\kappa$-directed closed}; to wit, for each $\gamma<\kappa$, the poset contains a dense $|\gamma|$-directed-closed subforcing.  As shown in \cite{KY}, the former notion yields a poset which is not almost $\omega_2$-directed-closed for it contradicts $\mathrm{MM}$.  On the other hand, notice that we do not assume anything upon the cardinality of the level sets of $T$. This contrast with the usual demand on $\kappa$-Kurepa trees that $|T_\alpha|\leq|\alpha|+\aleph_0$ for all $\alpha<\kappa.$
 
 \end{remark}
 We wish to define a poset $\mathbb{K}_\kappa$ that kills the measurability of $\kappa$ and it is both $\kappa$-closed and almost $\kappa$-directed-closed. The usual forcing to get a $\kappa$-Kurepa tree (see \cite[\S6]{CumHandBook}) has the first two of these properties but fails to have the third one. It turns out that   this is tightly connected with the requirement that the $\alpha$th-level set of the tree is not too big relative to $|\alpha|$. We do not require this in purpose,  aiming to have some wiggle room for our poset to fulfil the third requirement.  The forcing we shall use is a variation of the one considered by K\"onig and Yoshinobu in \cite[\S3]{KY}. Namely, 
 \begin{definition}\label{Konigposet}
    Let $\mathbb{K}_\kappa$ denote the poset consisting of $p=(t^p, h^p)$ where:
    \begin{enumerate}
        \item $t^p$ is a normal tree on $\kappa$, the levels of $t^p$ have  cardinality ${<}\kappa$ and its height $\mathrm{ht}(t^p)$ is $\alpha^p+1$ for  some $\alpha^p<\kappa$;
        \item for each inaccessible $\delta\leq \alpha^p$  there is a regressive map $f\colon t^p_\delta\rightarrow t^p_{<\delta}$;
        \item $h^p\colon t^p_{\alpha^p}\rightarrow \kappa^+$ is a one-to-one function.
    \end{enumerate}
Given conditions $(t,h), (s,g)$ in $\mathbb{K}_\kappa$   we write $(t, h)\leq (s, g)$ if  $s=t\restriction \mathrm{ht}(s)$, $\ran(g)\s \ran(h)$ and $g^{-1}(\xi)<_t h^{-1}(\xi)$ for each $\xi\in \ran(g)$.
 \end{definition}

 \begin{prop}[Main properties of $\mathbb{K}_\kappa$]\label{PropertiesKonigposet}\hfill
 \begin{enumerate}
     \item $\mathbb{K}_\kappa$ is $\kappa^{+}$-cc and $\kappa$-closed;
      \item $\mathbb{K}_\kappa$ adds a $\kappa$-regressive $\kappa$-Kurepa tree.
     \item $\mathbb{K}_\kappa$ is almost $\kappa$-directed-closed; 
     \item  $\one\forces_{\mathbb{K}_\kappa}``\kappa$ is not measurable''.
 \end{enumerate}
 \end{prop}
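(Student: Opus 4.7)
The plan is to dispatch (1)--(3) by standard tree-forcing arguments and to prove (4) by an ultrapower argument that exploits the bespoke regressive structure of the generic tree.

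For (1), the $\kappa^+$-cc will follow from a $\Delta$-system argument: by inaccessibility of $\kappa$ there are at most $\kappa$-many possible underlying trees $t^p$, so one can first thin $\kappa^+$-many conditions down to a common tree $t^*$, and then $\Delta$-systematize the ranges of the $h^p$'s while arranging that all such functions agree on the common root. For $\kappa$-closure, given a decreasing $\langle p_\alpha\mid\alpha<\gamma\rangle$ with $\gamma<\kappa$, I would form $t^*:=\bigcup_{\alpha<\gamma}t^{p_\alpha}$ of height $\alpha^*:=\sup_{\alpha<\gamma}(\alpha^{p_\alpha}+1)<\kappa$, and for each $\xi\in\bigcup_\alpha\ran(h^{p_\alpha})$ cap the $<_{t^*}$-chain $\langle (h^{p_\alpha})^{-1}(\xi)\rangle_\alpha$ with a fresh top node $b_\xi$; the new top level has size ${<}\kappa$ by regularity of $\kappa$. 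In the exceptional case that $\alpha^*$ is inaccessible, a regressive map is supplied by letting $f(b_\xi)$ be the member of $b_\xi$ at level $\gamma_\xi+1$, where $\gamma_\xi<\alpha^*$ is the supremum of the levels at which $b_\xi$ splits from the other top-level branches (bounded below $\alpha^*$ by inaccessibility). Clause (2) then drops out from density: $T^G:=\bigcup_{p\in G}t^p$ has height $\kappa$; each $\xi<\kappa^+$ eventually joins some $\ran(h^p)$, producing pairwise distinct $\kappa$-branches $b_\xi$; levels stay ${<}\kappa$ by the $\kappa^+$-cc; and regressivity at each inaccessible $\delta<\kappa$ is inherited from any condition with $\alpha^p\GE\delta$.

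For (3), set $D_\gamma:=\{p\in\mathbb{K}_\kappa\mid \alpha^p>|\gamma|\}$, which is dense by $\kappa$-closure. Given a directed $E\s D_\gamma$ with $|E|\LE|\gamma|$, the $t^p$'s for $p\in E$ amalgamate into $t^*:=\bigcup_{p\in E}t^p$, and $\alpha^*:=\sup_{p\in E}(\alpha^p+1)$ satisfies $\alpha^*>|\gamma|$ and $\cf(\alpha^*)\LE|\gamma|<\alpha^*$; hence $\alpha^*$ is singular, in particular not inaccessible. One then adds a top level exactly as in (1), with \emph{no} regressive map required at $\alpha^*$ itself, and regressivity at lower inaccessible levels is inherited from elements of $E$. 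This produces a lower bound for $E$ inside $D_\gamma$.

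The main step is (4). Working in $V[G]$, suppose toward a contradiction that $U$ is a $\kappa$-complete normal ultrafilter on $\kappa$, inducing $j\colon V[G]\to M$ with $M^\kappa\s M$. By elementarity, $j(T^G)$ is a $j(\kappa)$-regressive $j(\kappa)$-Kurepa tree in $M$, and $j(T^G)\restriction\kappa=T^G$ since $T^G\s V_\kappa$ is pointwise fixed by $j$. For each $\xi<\kappa^+$, $j(b_\xi)$ is a branch of $j(T^G)$ with a unique node $x_\xi$ at level $\kappa$; distinct $\xi$ yield distinct $x_\xi$ because the predecessor chain of $x_\xi$ within $j(T^G)\restriction\kappa$ is precisely $b_\xi$. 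As $\kappa$ is inaccessible in $M$ (inaccessibility being absolute between $V[G]$ and $M$), Definition~\ref{kurepaforcing} provides a regressive map $f\colon j(T^G)_\kappa\to T^G$ inside $M$. Pigeonholing $\kappa^+$ values $x_\xi$ into the set $T^G$ of cardinality $\LE\kappa$ yields $\xi\neq\xi'$ and $t\in T^G$ with $f(x_\xi)=f(x_{\xi'})=t$. Since $x_\xi, x_{\xi'}$ sit above distinct branches they are $<_T$-incompatible, and normality of $T^G$ ensures that $x_\xi\wedge x_{\xi'}$ is an actual node; clause~(2) of Definition~\ref{kurepaforcing} then gives $x_\xi\wedge x_{\xi'}<_T t$, but $t<_T x_\xi, x_{\xi'}$ also makes $t$ a common predecessor, forcing $t\LE_T x_\xi\wedge x_{\xi'}$---a contradiction. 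Hence $\kappa$ admits no normal measure in $V[G]$ and is therefore not measurable.

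The hard part will be (4): the idiosyncratic clause~(2) of Definition~\ref{kurepaforcing} has been engineered precisely so that this ultrapower analysis collapses into a pigeonhole contradiction at level $\kappa$ of $j(T^G)$. A secondary subtlety in (3) is recognizing that passing to $D_\gamma$ to \emph{singularize} $\alpha^*$ is the entire content of the word ``almost''; without this move, the regressive-map requirement at inaccessible top levels would obstruct honest $\kappa$-directed-closedness, as occurs in \cite{KY}.
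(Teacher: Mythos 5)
Your treatments of (2), (3) and (4), and of the chain condition in (1), are correct and essentially the paper's own arguments: the $\Delta$-system thinning is the ``standard argument'' the paper invokes from $2^{<\kappa}=\kappa$; your (3) is the paper's density-plus-amalgamation proof (and you correctly identify that singularizing $\alpha^*$ is the whole point of ``almost''); and your (4) is the paper's argument for non-measurability, merely reorganized --- the paper shows $b\mapsto b_{\alpha_b}$ is injective into $T$ using clause (2) of Definition~\ref{kurepaforcing}, while you pigeonhole $f$ directly and derive the same contradiction $x_\xi\wedge x_{\xi'}<_T t\LE_T x_\xi\wedge x_{\xi'}$; these are the same use of regressivity.

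There is, however, a genuine gap in your argument for $\kappa$-closure, at the parenthetical ``(bounded below $\alpha^*$ by inaccessibility).'' The top level you are adding is indexed by $\bigcup_{\alpha<\gamma}\ran(h^{p_\alpha})$, and the only bound on its size is $<\kappa$: the paper deliberately places no bound of the form $|T_\beta|\LE|\beta|$ on levels, so the number of top-level branches can vastly exceed $\alpha^*$, and in particular exceed $\cf(\alpha^*)$. Hence for a fixed $b_\xi$ the splitting levels from the other capped branches need not be bounded below $\alpha^*$, and when they are cofinal in $\alpha^*$ clause (2) of Definition~\ref{kurepaforcing} forces $f(b_\xi)$ to lie above a cofinal subset of $b_\xi$, so \emph{no} regressive map at level $\alpha^*$ exists --- not just the one you propose. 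Concretely, let $\delta<\kappa$ be the least inaccessible and build a strictly decreasing $\delta$-sequence in which at each successor stage the node carrying a fixed label $\xi_0$ splits off a fresh labelled branch: every limit stage below $\delta$ is non-inaccessible so the construction proceeds, the heights are cofinal in $\delta$, and the resulting sequence has no lower bound. So the step fails, and it cannot be repaired by choosing $f$ more cleverly. You should be aware that the paper itself elides this: it ``skips'' the closure verification by pointing to (3), but the entire mechanism of (3) is the passage to the dense set $D_\gamma$, which forces $\alpha^*$ to be singular and is unavailable when one must bound an arbitrary descending sequence. What your construction (and the paper's) does legitimately establish is that $\mathbb{K}_\kappa$ is $\kappa$-strategically closed and, via the dense $|\gamma|^+$-closed sets $D_\gamma$, ${<}\kappa$-distributive --- which is what the rest of the paper actually uses --- but literal $\kappa$-closure does not follow from the argument as written.
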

 \begin{proof}
    (1) Since $2^{<\kappa}=\kappa$ standard arguments show that $\mathbb{K}_\kappa$ is $\kappa^+$-cc. We skip the verification of $\kappa$-closure as it is similar to the argument for (3).

    (2) Let $G\s\mathbb{K}_\kappa$  a $V$-generic filter and define  $T:=\bigcup\{t^p\mid p\in G\}$.  It is routine to check that $T$ is a $\kappa$-regressive $\kappa$-Kurepa tree as witnessed by the collection of branches $\{b_\alpha\mid \alpha<\kappa^+\}$ defined as $$b_\alpha:=\sup_{<_{T}}\{h^{-1}(\alpha)\mid \exists p\in G\, (h^p=h\,\wedge\, \alpha\in \ran(h^p))\}.$$

    (3) Fix $\gamma<\kappa$ and define $\{(t,h)\in \mathbb{K}_\kappa\mid \text{The height of the tree $t$ is ${>}\gamma$}\}$. Clearly, this set is dense in $\mathbb{K}_\kappa$. We claim that it is also $|\gamma|^+$-directed-closed. 
    
    Let $\{(t_\xi,h_\xi)\mid \xi<|\gamma|\}$ be any directed set of conditions in the set. For each $\xi<|\gamma|$ let $\alpha_\xi+1$ be the height of $t_\xi$ and set $\alpha^*:=\sup_{\xi<\gamma}\alpha_\xi$. Define $$\textstyle t^*:=\bigcup_{\xi<|\gamma|}t_\xi\,\wedge\, <_{t^*}:=\bigcup_{\xi<|\gamma|}<_{t_\xi}.$$
    Let us add to  $(t^*,\leq_{t^*})$ one more level so that the resulting tree has successor height. Let $\mathcal{B}$ denote the collection of all branches $b\s t^*$ (i.e., $\s$-maximal $<_{t^*}$-chains) and  $\Phi\colon \mathcal{B}\rightarrow\kappa\setminus t^*$ be an injective map $b\mapsto \alpha_b$ (note that this exists as $\kappa$ is inaccessible). We declare $\alpha_b$ to be bigger than $\eta$ for all $\eta\in b$ and declare $\alpha_b$ and $\alpha_{b'}$ to be incompatible provided $b\neq b'.$ 

    \smallskip
    
    The above choice yields a new tree $t$ whose first $\alpha^*$-many levels are given by $t^*$ and the top one is given by the new ordinals $\{\alpha_b\mid b\in \mathcal{B}\}$. In symbols, 
    $$t_{{<}\alpha_*}=t^*\;\text{and}\;t_{\alpha_*}=\{\alpha_b\mid b\in\mathcal{B}\}.\footnote{Note that $t_{\alpha_*}$ can possibly have cardinality ${>}|\alpha_*|$ but in any case it has size ${<}\kappa$ and therefore it is a legitimate choice. The fact that $t_{\alpha_*}$ can have any size ${<}\kappa$ is an important difference with  the forcing adding a \emph{slim} $\kappa$-Kurepa tree.}$$

    The proposed one-to-one map   $h\colon t_{\alpha^*}\rightarrow \kappa^+$  is defined as follows. For each ordinal  $\nu\in \bigcup_{\xi<|\gamma|}\ran(h_\xi)$ let us define a $<_{t^*}$-chain through $t^*$ as $$c_\nu:=\sup_{<_{t^*}}\{h_{\xi}^{-1}(\nu)\mid \nu\in \ran(h_\xi)\}.$$
    It is conceivable that $c_\nu$ is not $\s$-maximal but  we can $\s$-extend it to some $b_\nu\in\mathcal{B}$. In that case $\alpha_{b_\nu}$ is $<_t$-bigger than all members of $c_\nu$. Let $$\textstyle \Psi\colon \{\alpha_b\mid b\in\mathcal{B}\}\setminus \{\alpha_{b_\nu}\mid \nu\in \bigcup_{\xi<|\gamma|}\ran(h_\xi)\}\rightarrow \kappa^+\setminus (\sup\bigcup_{\xi<|\gamma|}\ran(h_\xi))+1$$ be any injection. We define $h\colon t_{\alpha^*}\rightarrow\kappa^+$ as follows:
$$h(\alpha_b):=\begin{cases}
    \nu, & \text{if $\alpha_b=\alpha_{b_\nu}$ for some $\nu\in \bigcup_{\xi<|\gamma|}\ran(h_\xi)$;}\\
    \Psi(\alpha_b), & \text{otherwise.}
\end{cases}$$

    \smallskip

    We claim that $(t, h)$ is a condition in $\mathbb{K}_\kappa$. The only caveat is Clause~(2) in Definition~\ref{kurepaforcing}: If $\delta\leq \alpha^*$ is an inaccessible cardinal, since $\cf(\alpha^*)=\gamma<\alpha^*$, it must be the case that $\delta<\alpha^*$. Hence there is some $\xi$ such that $\delta\leq \alpha_\xi$. Since $t_{\delta}=(t_{\xi})_\delta$ and $t_{<\delta}=(t_{\xi})_{<\delta}$ we simply use the fact $(t_\xi, h_\xi)\in\mathbb{K}_\kappa$ to get the sought regressive function. Thus we conclude that  $(t, h)\in \mathbb{K}_\kappa$. The construction of $(t,h)$ guarantees that $(t,h)\leq (t_\xi,h_\xi)$ for all $\xi<|\gamma|.$

    \smallskip

    (4) Suppose otherwise and let $G\s\mathbb{K}_\kappa$ a $V$-generic filter such that $\kappa$ is measurable in $V[G]$. Let $j\colon V[G]\rightarrow M$ be an elementary embedding with $\crit(j)=\kappa$ and $T$ be the $\kappa$-regressive $\kappa$-Kurepa tree introduced by $G$. By elementarity, $M$ models that $j(T)$ is a $j(\kappa)$-regressive $j(\kappa)$-Kurepa tree and that $\kappa$ is inaccessible. Let $ f\colon j(T)_\kappa\rightarrow T$ be a regressive map in $M$. For a branch $b$  of $T$, the $\kappa$th-member of $j(b)$ -- denoted $j(b)_\kappa$-- belongs to $j(T)_\kappa$ and witnesses $f(j(b)_\kappa)<_T j(b)_\kappa$ (by regressiveness). Let $\alpha_b<\kappa$ be such that $$f(j(b)_\kappa)<_Tj(b)_{\alpha_b}=j(b_{\alpha_b})=b_{\alpha_b}.$$
    Let us observe that the map $ b\mapsto b_{\alpha_b}$ is one-to-one: Indeed, if $b\neq b'$ are cofinal branches of $T$ then the regressiveness of $f$ ensures that $f(j(b)_\kappa), f(j(b')_\kappa)$ are $<_T$-above  $j(b)_\kappa\wedge j(b')_\kappa=\sup_{<_T}\{z\in T\mid z\in b\cap b'\}$. So $b_{\alpha_b}\perp_{T} b_{\alpha_{b'}}.$

    This is a contradiction because $T$ has $\kappa^+$-many branches and $|T|=\kappa$.
 \end{proof}

 \subsection{The coding poset}\label{SectionCodingPoset}
Given a regular uncountable cardinal $\chi$ a stationary set $S\s \chi$ is  called \emph{fat} if for every club $C\s \chi$, $S\cap C$ contains closed sets of arbitrarily large order-type below $\chi$. Given a fat stationary set $S\s \chi$, Abraham and Shelah \cite{AbrShe} defined the poset $\mathbb{C}(S)$ which consists of closed bounded sets $c\s S$ and it is ordered by $\sqsupseteq$-extension.  Clearly $\mathbb{C}(S)$ adds a generic club $\mathcal{C}\s \chi$ contained in $S$. The poset is $\chi^+$-cc and $\chi$-distributive -- 
fatness is crucial to establish this latter fact (see \cite{AbrShe} or \cite[\S6]{CumHandBook}).

\smallskip

The following poset will be employed in the forthcoming \S\ref{SectionForcingW}:

\begin{definition}[Coding poset]
    Let $\chi$ be a Mahlo cardinal  and $\varepsilon<\chi$ be a regular cardinal. By $\mathbb{C}(\varepsilon,\chi)$ we denote the two-step iteration $$\textstyle \mathbb{C}(S_{\chi,\varepsilon})\ast(\Col(\varepsilon,{\dot{c}}_0^{++})\times \prod_{\eta<\chi} \Col(\dot{c}_\eta^{++},\dot{c}_{\eta+1}^{++})),$$
    where $S_{\chi,\varepsilon}:=\{\sigma<\chi\mid \sigma\in \mathrm{Sing}\,\wedge\,\sigma>\varepsilon\}$ and
    $\langle \dot{c}_\eta\mid \eta<\chi\rangle$ is a $\mathbb{C}(S_{\chi,\varepsilon})$-name for the corresponding  generic club. 
\end{definition}

The poset $\mathbb{C}(\varepsilon,\chi)$ makes $\chi$ non-Mahlo by introducing a club consisting of singular cardinals and then collapses the gaps in between the members of the generic club. Thus, $\mathbb{C}(\varepsilon,\chi)$ makes $\chi$ be the first inaccessible cardinal above $\varepsilon$. In addition,   $\mathbb{C}(\varepsilon,\chi)$ preserves all cardinals ${\leq}\varepsilon$ and collapses $\varepsilon^+ .$

    Forcing with $\mathbb{C}(\varepsilon,\chi)$ will be instrumental to make the stationary set $\mathcal{S}$ of Lemma~\ref{indiscernibility} definable in our final generic extension. Although $\mathbb{C}(\varepsilon,\chi)$  is not $\chi$-closed it has been posed to be  $\varepsilon$-directed-closed.

   \subsection{Further coding}
    In the proof of Theorem~\ref{ConsistencyofA} and Theorem~\ref{AandI0} we will have to begin with a model carrying an easily-definable well-ordering. This will be used to ensure that the our forcing iteration  is shifted correctly by the relevant extender embeddings $j_{E^*}$ (see Clause~\eqref{C1HI} in page~\pageref{C1HI}).

    \smallskip

    Consider the following minor variation of Hamkins-Reitz {CCA} \cite[\S3]{Reitz}: 
    \begin{definition}
       $\mathrm{CCA}^*$ is the following assertion: For each ordinal $\alpha$ and every set $a\s \alpha$ there is yet another ordinal $\theta$ below the first $\beth$-fixed point above $\alpha$ such that $a=\mathsf{Code}(\alpha,\theta):=\{\beta<\alpha\mid 2^{\aleph_{\theta+\beta+1}}=\aleph_{\theta+\beta+2}\}.$
    \end{definition}
   Under $\mathrm{CCA}^*$ the set-theoretic universe carries a natural well-ordering. Namely,  for each set $a\s \ord$ denote $\alpha_a:=\{\beta\in \ord\mid a\s \beta\}$ and $\theta_a:=\min\{\theta\in \ord\mid a=\mathsf{Code}(a_\alpha,\theta)\}$. Given $a,b\s \ord$ we write $$\text{$a\prec b$ if and only if $\alpha_a<\alpha_b$ or ($\alpha_a=\alpha_b$ and $\theta_a<\theta_b$). }$$
   By coding sets as subsets of their corresponding ranks the above  induces a well-ordering over the set-theoretic universe. This well-ordering is quite absolute; specifically, given an inaccessible cardinal $\lambda$ and sets $a,b\in V_\lambda$,
   \begin{equation*}\;\;\text{$a\prec b$ if and only if $V_\lambda\models``a\prec b$''.}    
   \end{equation*}

   In \cite[\S3]{Reitz}, it is showed that if the GCH holds then $\mathrm{CCA}^*$ holds after forcing with McAloon's class iteration $\mathbb{P}$ \cite{McAloon}. The forcing $\mathbb{P}$ is fairly well-behaved and preserves (virtually) all kinds of large cardinals. For instance forcing with $\mathbb{P}$ preserves hugeness (recall Definition~\ref{def: hugeness}).
   \begin{lemma}\label{lemma: McAloonOK}
       Assume the $\mathrm{GCH}$ holds. If $\kappa$ is a huge cardinal then $$\text{$\one\forces_{\mathbb{P}}``\kappa$ is huge $+\,\mathrm{CCA}^*$''.}$$
   \end{lemma}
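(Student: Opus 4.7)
By Reitz's analysis in~\cite{Reitz}, forcing with $\mathbb{P}$ yields $\mathrm{CCA}^*$ over any ground model satisfying $\GCH$, so it only remains to argue that $\kappa$ remains huge in $V[G]$, where $G\subseteq \mathbb{P}$ is an arbitrary $V$-generic filter. My strategy is to produce a huge embedding $\tilde{\jmath}\colon V[G]\to M[H]$ by lifting a fixed huge embedding $j\colon V\to M$ with $j(\kappa)=\lambda$ and $M^\lambda\subseteq M$. Concretely, I will build inside $V[G]$ an $M$-generic filter $H\subseteq j(\mathbb{P})$ satisfying $j\image G\subseteq H$. The closure $(M[H])^{\lambda}\subseteq M[H]$ computed in $V[G]$ will then follow from $M^\lambda\subseteq M$ together with the chain condition of $j(\mathbb{P})$ via standard preservation lemmas, so $\tilde{\jmath}$ will indeed witness hugeness in $V[G]$.

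Factor $\mathbb{P}=\mathbb{P}_\kappa\ast\dot{\mathbb{P}}^\kappa$ and $j(\mathbb{P})=\mathbb{P}_\kappa\ast\dot{\mathbb{P}}_{[\kappa,\lambda)}\ast\dot{\mathbb{Q}}$, where $\dot{\mathbb{Q}}$ is the portion of $j(\mathbb{P})$ beyond stage $\lambda$ as computed in $M$. Under $\GCH$ each factor carries the appropriate chain condition and degree of closure; moreover, because $V_\lambda\subseteq M$ (from $M^\lambda\subseteq M$) and McAloon's iteration is defined locally, the middle factor $\dot{\mathbb{P}}_{[\kappa,\lambda)}$ is computed identically in $V$ and $M$. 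Writing $G=G_\kappa\ast G_{[\kappa,\lambda)}\ast G^\lambda$, the first two blocks of $H$ come for free: (i) since $\crit(j)=\kappa$ and $\mathbb{P}_\kappa\subseteq V_\kappa$, $j\restriction\mathbb{P}_\kappa=\mathrm{id}$, so $j$ lifts trivially to $V[G_\kappa]\to M[G_\kappa]$; (ii) the $V[G_\kappa]$-generic $G_{[\kappa,\lambda)}$ is automatically $M[G_\kappa]$-generic, because $\mathbb{P}_{[\kappa,\lambda)}$ is $\lambda$-cc there and every maximal antichain belonging to $M[G_\kappa]$ lies in $V[G_\kappa]$ by the ${<}\lambda$-closure of $M[G_\kappa]$. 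Hence $j$ lifts to $V[G\restriction\lambda]\to M[G\restriction\lambda]$.

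The real work is to construct, inside $V[G]$, an $M[G\restriction\lambda]$-generic filter $H'\subseteq\dot{\mathbb{Q}}$ containing $j\image G_{[\kappa,\lambda)}\cup j\image G^\lambda$. First, $j\restriction V_\lambda\in M$ (using $|V_\lambda|=\lambda$ under $\GCH$ and $M^\lambda\subseteq M$), so $j\image G_{[\kappa,\lambda)}$ is a directed subset of $\dot{\mathbb{Q}}$ of size $\lambda$ that lies in $M[G\restriction\lambda]$; the $\lambda^+$-directed-closure of $\dot{\mathbb{Q}}$ in $M[G\restriction\lambda]$ then provides a master condition $p^{*}\in\dot{\mathbb{Q}}$ below it. For the tail image $j\image G^\lambda$, which is itself a proper class, a stage-by-stage Easton bookkeeping through the blocks of $\dot{\mathbb{Q}}$ is required: at each set-sized block the number of $M[G\restriction\lambda]$-maximal antichains is bounded by $\lambda^+$ (using $\GCH$ in $M$), and these may be diagonalized against using the high degree of closure at that block, while successive master conditions absorb $j\image G^\lambda$ stage by stage. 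This inductive lifting through a class-sized tail is the genuine obstacle; once carried out, $\tilde{\jmath}\colon V[G]\to M[H]$ with $H=G_\kappa\ast G_{[\kappa,\lambda)}\ast H'$ is the desired huge embedding, completing the proof.
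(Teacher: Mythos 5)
Your outline coincides with the paper's through the master-condition step: lift through $\mathbb{P}_{j(\kappa)}$ using the actual generic $G_{j(\kappa)}$ (the iteration being computed the same way in $V$ and $M$), find a master condition below $j\image G_{[\kappa,j(\kappa))}$ using the $j(\kappa)^+$-closure of $j(\mathbb{P}_{[\kappa,j(\kappa))})$, and then build an $M[G_{j(\kappa)}]$-generic filter for that block by meeting antichains. Two things go wrong after that.

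First, the class-length tail. You propose to absorb $j\image G^\lambda$ by a stage-by-stage diagonalization through all of $j(\mathbb{P})$ above $j(\lambda)$, and you yourself flag this as ``the genuine obstacle'' and leave it uncarried-out. This is where the proof actually lives, and the mechanism you suggest is not the right one: your claim that ``at each set-sized block the number of $M[G\restriction\lambda]$-maximal antichains is bounded by $\lambda^+$'' is false for blocks high up in the iteration (a block at stage $\gamma\gg j(\lambda)$ has on the order of $\gamma^+$ antichains in $M$), and even block-by-block diagonalization leaves you with a class-length recursion whose output must be verified generic for a \emph{class} forcing over $M$. The standard device, and the one the paper uses, is the generic transfer theorem (\cite[\S 15]{CumHandBook}): once $j$ is lifted to $V[G_{j(\kappa)}]\to M[G_{j(\kappa)}\ast H]$, it has width at most $j(\kappa)$, and the tail $\mathbb{P}_{[j(\kappa),\Ord)}$ is $j(\kappa)^+$-distributive; therefore the pointwise image $j\image G_{[j(\kappa),\Ord)}$ \emph{already generates} the required $M[G_{j(\kappa)}\ast H]$-generic filter, with no construction and no further master conditions. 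Identifying this is the missing idea.

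Second, and less seriously, your antichain count for the set-sized block $j(\mathbb{P}_{[\kappa,j(\kappa))})$ is asserted rather than proved. Naively, $M$ sees up to $(2^{|j(\mathbb{P}_{[\kappa,j(\kappa))})|})^M$ maximal antichains, whose $V$-cardinality can exceed $j(\kappa)^+$, while you only have $j(\kappa)^+$-closure to diagonalize with. The paper gets around this by fixing the representation of $M$ as $\{j(f)([\id]_U)\mid f\in V\}$ for a normal measure $U$ on $\mathcal{P}(j(\kappa))$, so that every maximal antichain in $M$ is $j(f)([\id]_U)$ for an $f\colon\mathcal{P}(j(\kappa))\to\mathcal{P}_{j(\kappa)}(\mathbb{P}_{[\kappa,j(\kappa))})$, and then using $\mathrm{GCH}_{\geq j(\kappa)}$ to bound the relevant family. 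Without some such device your diagonalization at this block does not close.
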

   \begin{proof}[Proof sketch]
       Let $j\colon V\rightarrow M$ be a huge embedding. Without loss of generality  members of $M$ take the form $j(f)([\id]_{U})$ where $\id\colon \mathcal{P}(j(\kappa))\rightarrow\mathcal{P}(j(\kappa))$ and $U$ is a normal measure on $\mathcal{P}(j(\kappa))$ (\cite[Theorem~24.8]{Kan}). Let $G\s \mathbb{P}$ a $V$-generic filter. Clearly $j$ lifts to $j\colon V[G_\kappa]\rightarrow M[G_{j(\kappa)}]$. Note that $M[G_{j(\kappa)}]$ is closed under $j(\kappa)$-sequences in $V[G_{j(\kappa)}]$ as $\mathbb{P}_{j(\kappa)}$ is $j(\kappa)$-cc. Next, note that $j``G_{[\kappa,j(\kappa))}$ is a member of $M[G_{j(\kappa)}]$ and a directed subset of $j(\mathbb{P}_{[\kappa,j(\kappa))})$, which is a  $j(\kappa)^+$-closed poset in $V[G_{j(\kappa)}]$.  In particular, there is a master condition $q$ for  $j``G_{[\kappa,j(\kappa))}$. Using the $\mathrm{GCH}_{\geq j(\kappa)}$ and the fact that each maximal antichain $A\in M$ for $j(\mathbb{P}_{[\kappa,j(\kappa))})$ can be represented as $j(f)([\id]_U)$ for $f\colon \mathcal{P}(j(\kappa))\rightarrow \mathcal{P}_{j(\kappa)}(\mathbb{P}_{[\kappa,j(\kappa))})$ we find $q\in H\in V[G_{j(\kappa)}]$ that is $M[G_{j(\kappa)}]$-generic for $j(\mathbb{P}_{[\kappa,j(\kappa))})$. Thus, inside $V[G_{j(\kappa)}]$ we lift our embedding to $j\colon V[G_{j(\kappa)}]\rightarrow M[G_{j(\kappa)}\ast H]$. Finally, we use that $j$ has width\footnote{\label{Footnote: width}The \emph{width} of an elementary embedding $j\colon M\rightarrow N$ between transitive models is said to be ${\leq}\theta$ if every member of $N$ can be expressed as $j(f)(a)$ for  $a\in N$ and a function $f\in M$  with $M\models |\dom(f)|\leq \theta$.} ${\leq}j(\kappa)$ and that $\mathbb{P}_{[j(\kappa),\ord)}$ is $j(\kappa)^+$-distributive to transfer the generic $G_{[j(\kappa),\ord)}$ (see \cite[\S15]{CumHandBook}). This yields a $V[G]$-definable embedding
       $j\colon V[G]\rightarrow M[G_{j(\kappa)}\ast H\ast j``G_{[j(\kappa),\ord)}]$ which witnesses hugeness of $\kappa$.
   \end{proof}
   Forcing with $\mathbb{P}$ preserves even stronger large cardinals; such as the existence of an $I_0(\kappa)$ (or even an $I_0^\sharp(\kappa)$) embedding (see e.g., \cite{DimonteFriedman}).

\section{Axiom $\mathcal{A}$ and its consequences}\label{WoodinsAxiom}
In this section we  introduce axiom $\mathcal{A}$ and show that it yields the sought equivalence between supercompactness and $C^{(1)}$-supercompactness. We also show that $\mathcal{A}$ disproves Woodin's EEA (see Definition~\ref{EEA}). 

\begin{definition}\label{AxiomW}
Axiom $\mathcal{A}$  is the  conjunction of the following statements:
\begin{enumerate}
    \item There is a proper class of inaccessible cardinals. 
    \item Suppose that $\delta$ is  stationary-correct superstrong with inaccessible target $\lambda$.\footnote{Recall Definition~\ref{CorrectSuperstrong}.} Then, $\delta$ is tall with target $\theta$ for all successor inaccessible  cardinals $\theta\in (\delta,\lambda)$.
\end{enumerate}
\end{definition}
\begin{remark}
  Axiom $\mathcal{A}$ has been  distilled as a result of several discussions between the author and Woodin. The author wishes to thank Prof. Woodin for the stimulating conversations on the topic and for his unvaluable insights.
\end{remark}

The next provides a characterization of $\Cn$-supercompact cardinals:

\begin{theorem}\label{CharacterizingCnsupercompacts}
	Fix $n<\omega$. The following are equivalent: 	
	\begin{enumerate}
		\item $\kappa$ is $C^{(n)}$-supercompact; 
		\item $\kappa$ is both supercompact and  enhanced $C^{(n)}$-tall.
	\end{enumerate} 
\end{theorem}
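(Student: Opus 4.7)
The direction $(1)\Rightarrow(2)$ is routine: supercompactness is the case $n=0$, and given a $C^{(n)}$-supercompactness witness $j\colon V\to M$ at $\lambda$, the very same $j$ witnesses tallness with target $\theta:=j(\kappa)\in C^{(n)}$ (since $M^\lambda\subseteq M$ implies $M^\kappa\subseteq M$). Moreover, $\cf^V(\theta)>\lambda$ because $\theta$ is regular in $M$ and any $V$-cofinal sequence in $\theta$ of length $\leq\lambda$ would lie in $M$ by $\lambda$-closure, contradicting the regularity of $\theta$ there.

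For $(2)\Rightarrow(1)$, I fix $\lambda>\kappa$. Using enhanced $C^{(n)}$-tallness, I pick $\theta\in C^{(n)}$ with $\cf(\theta)>\lambda$ and a tall embedding $h\colon V\to N$ with $\crit(h)=\kappa$, $h(\kappa)=\theta$ and $N^\kappa\subseteq N$. The plan is to combine $h$ with a $\lambda$-supercompactness measure --- available because $\kappa$ is supercompact --- to produce a $C^{(n)}$-supercompactness embedding at $\lambda$.

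Concretely, since $|h\image\lambda|=\lambda<\theta=h(\kappa)$, I define the ultrafilter $W\subseteq\mathcal{P}(\mathcal{P}_\kappa(\lambda))$ by $X\in W$ iff $h\image\lambda\in h(X)$; standard verifications give that $W$ is $\kappa$-complete, normal and fine. Letting $j_W\colon V\to M_W$ be the ultrapower, $j_W$ is a $\lambda$-supercompactness embedding, and the factor lemma produces an elementary $k\colon M_W\to N$ with $k\circ j_W=h$, $k([f]_W)=h(f)(h\image\lambda)$, and $k(j_W(\kappa))=\theta$. Representing an ordinal $\alpha\leq\lambda$ in the ultrapower by the function $x\mapsto\otp(x\cap\alpha)$ one checks $\crit(k)>\lambda$.

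The main obstacle --- and the heart of the argument --- is showing $j_W(\kappa)\in C^{(n)}$. The factor $k$ typically strictly increases $j_W(\kappa)$ up to $\theta$, so $\Sigma_n$-correctness does not transfer by bare elementarity of $k$. The plan is a reflection argument: given a $\Sigma_n$-formula $\phi(\bar a)$ with $\bar a\in V_{j_W(\kappa)}$, first trap $V$-witnesses inside $V_\theta$ via $\theta\in C^{(n)}$; then, leveraging $\crit(k)>\lambda$, the $\lambda$-closure $M_W^\lambda\subseteq M_W$, and crucially $\cf(\theta)>\lambda$ (which prevents $\Sigma_n$-witnesses from being stuck in the interval $[j_W(\kappa),\theta)$), pull the witnesses back into $V_{j_W(\kappa)}$. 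Making this reflection step work uniformly for all $\Sigma_n$-formulas is where the hypothesis $\cf(\theta)>\lambda$ (rather than merely $\theta\in C^{(n)}$) earns its keep.
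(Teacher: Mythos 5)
Your direction $(1)\Rightarrow(2)$ is fine and matches the paper's (which simply calls it evident). The direction $(2)\Rightarrow(1)$, however, has a fatal gap at the very first concrete step. You define $W$ by $X\in W$ iff $h\image\lambda\in h(X)$, where $h\colon V\to N$ is only a \emph{tall} embedding, i.e.\ $N^\kappa\subseteq N$. For $W$ to be an ultrafilter on $\mathcal{P}_\kappa(\lambda)$ at all you need $h\image\lambda\in h(\mathcal{P}_\kappa(\lambda))=(\mathcal{P}_{h(\kappa)}(h(\lambda)))^N$, and in particular $h\image\lambda\in N$. Since $\lambda>\kappa$ and $N$ is only closed under $\kappa$-sequences, there is no reason for $h\image\lambda$ to belong to $N$; indeed $h\image\lambda\in N$ is exactly the statement that $h$ witnesses $\lambda$-supercompactness, whereas tall cardinals are far weaker than supercompact ones (consistency-wise at the level of strong cardinals). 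So the measure $W$, the factor map $k$, and everything built on them do not exist in general. You cannot manufacture the supercompactness measure out of the tall embedding; you must take one supplied by the separate hypothesis that $\kappa$ is supercompact.

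Even setting that aside, your endgame aims at the wrong target: you try to show $j_W(\kappa)\in C^{(n)}$ for the supercompactness ultrapower itself, via a reflection argument you only sketch. There is no reason the image of $\kappa$ under a $\mathcal{P}_\kappa(\lambda)$-ultrapower should be $\Sigma_n$-correct, and $\cf(\theta)>\lambda$ will not rescue this. The correct use of the hypotheses (and the paper's argument) is the opposite composition: take $j\colon V\to M$ from a normal measure on $\mathcal{P}_\kappa(\lambda)$ with $\lambda^{<\kappa}=\lambda$; the hypothesis $\cf(\theta)>\lambda$ gives $j(\theta)=\theta$, so by elementarity $M$ contains a $(j(\kappa),\theta)$-extender $E$ witnessing that $j(\kappa)$ is tall with target $\theta$ \emph{inside $M$}; letting $i\colon M\to N=\Ult(M,E)$, the composite $\iota=i\circ j$ satisfies $\iota(\kappa)=\theta\in C^{(n)}$ exactly, and one then checks $N^\lambda\cap V\subseteq N$ using $M^\lambda\subseteq M$ together with $H_\theta^M=H_\theta^N$ and $\cf(\theta)>\lambda$. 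In short: push the image of $\kappa$ \emph{up to} the correct cardinal $\theta$ by a second ultrapower, rather than trying to prove correctness of the ordinal $j_W(\kappa)$ you happen to land on.
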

\begin{proof}
	The implication $(1)\Rightarrow(2)$ is evident. As for $(2)\Rightarrow(1)$ one argues as follows. Fix $\lambda>\kappa$ with $\lambda^{<\kappa}=\lambda$. Since $\kappa$ is an enhanced $C^{(n)}$-tall cardinal  there is $\theta\in C^{(n)}$ with $\cf(\theta)>\lambda$ such that $\kappa$ is tall with target $\theta$. Next,  let $j\colon V\rightarrow M$ be the elementary embedding induced by a supercompact measure on $\mathcal{P}_\kappa(\lambda)$. Using that $\cf(\theta)>\lambda=\lambda^{<\kappa}$ standard arguments yield $j(\theta)=\theta$. Thus, by elementarity,  $M$ thinks that $j(\kappa)$ is tall with target $\theta$. Let $E\in M$ be a witnessing $(j(\kappa),\theta)$-$M$-extender and let $i\colon M\rightarrow N\simeq \Ult(M,E)$ be its induced ultrapower. Clearly, the composition $\iota:=i\circ j$ yields an elementary embedding $\iota\colon V\rightarrow N$ with $\iota(\kappa)=\theta\in C^{(n)}$. It remains to show that $N^{\lambda}\cap V\s N$: Let $\langle x_\alpha\mid \alpha<\lambda\rangle\in N^\lambda\cap V$. By definition of $N$, there are $s_\alpha\in [\theta]^{<\omega}$ and functions $f_\alpha\colon [\zeta]^{|s_\alpha|}\rightarrow M$ in $M$ such that $i(f_\alpha)(s_\alpha)=x_\alpha$. Since $M^\lambda\cap V\s M$ it follows that $\langle f_\alpha\mid \alpha<\lambda\rangle\in M$ and thus $i(\langle f_\alpha\mid \alpha<\lambda\rangle)=\langle i(f_\alpha)\mid \alpha<\lambda\rangle\in N$. Similarly, $\langle s_\alpha\mid \alpha<\lambda\rangle\in (H_\theta^M)^{<\lambda}\s H^M_\theta=H^N_\theta$ in that $\theta$ is a strong limit cardinal with cofinality ${>}\lambda$. From these two facts we conclude that $\langle x_\alpha\mid \alpha<\lambda\rangle\in N$, as needed.
\end{proof}

Part of our interest in $\mathcal{A}$ rests on the following key fact:
\begin{theorem}\label{AimpliesCn}
Assume $\mathcal{A}$ holds. Every supercompact cardinal is  enhanced $C^{(1)}$-tall.  In particular, every supercompact cardinal is  $C^{(1)}$-supercompact.

Moreover, under $\mathcal{A}$, if $\delta$ is supercompact and $\delta<\lambda$  then
$$\text{$\delta$ is $\lambda$-supercompact  if and only if $\delta$ is $\lambda$-$C^{(1)}$-supercompact}.$$
\end{theorem}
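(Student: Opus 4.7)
The plan is to appeal to Theorem~\ref{CharacterizingCnsupercompacts}, which reduces the first assertion to showing that every supercompact cardinal $\kappa$ is enhanced $C^{(1)}$-tall. Fixing $\lambda>\kappa$, the goal is to produce a successor inaccessible $\theta>\lambda$ such that $\kappa$ is tall with target $\theta$: successor inaccessibles $\theta$ automatically satisfy $\theta\in C^{(1)}$ (they are strong limit) and $\cf(\theta)=\theta>\lambda$, which is exactly what enhanced $C^{(1)}$-tallness demands.

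The key lemma to establish is that every supercompact $\kappa$ is stationary-correct superstrong with target $\Lambda$ for every inaccessible $\Lambda>\kappa$. To prove it, I would fix such a $\Lambda$ and pick a $\Lambda$-supercompact embedding $j\colon V\rightarrow N$, so $\crit(j)=\kappa$, $j(\kappa)>\Lambda$ and $N^\Lambda\s N$. A routine rank induction on $\alpha\leq \Lambda$ then yields $V_\Lambda\s N$: at the successor step one uses that $|V_\alpha|<\Lambda$ for $\alpha<\Lambda$ (by inaccessibility of $\Lambda$) together with ${<}\Lambda$-closure of $N$ to place each subset of $V_\alpha$ inside $N$. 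Likewise $\mathrm{Cub}_\Lambda\s N$, since each cub $C\s\Lambda$ is a $\Lambda$-sequence of ordinals already lying in $N$. Mirroring the extender construction of Proposition~\ref{Prop1extendibility}, I would set $\mathcal{I}:=V_\Lambda\cup\mathrm{Cub}_\Lambda$ and consider the derived extender $\mathcal{E}:=\langle E_a\mid a\in[\mathcal{I}]^{<\omega}\rangle$ with $X\in E_a\,\Leftrightarrow\, X\s [V_\kappa]^{|a|}\,\wedge\, a\in j(X)$. The resulting ultrapower $j_\mathcal{E}\colon V\rightarrow M_\mathcal{E}$ is elementary, satisfies $\crit(j_\mathcal{E})=\kappa$, $\mathcal{I}\s M_\mathcal{E}$, and -- using the inaccessibility of $\Lambda$ to close the seeds -- $j_\mathcal{E}(\kappa)=\Lambda$, so $\kappa$ is stationary-correct superstrong with target $\Lambda$, as desired.

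Granted the key lemma, the first assertion is concluded as follows. Given $\lambda>\kappa$, Clause~(1) of $\mathcal{A}$ furnishes arbitrarily large inaccessibles, so I can pick a successor inaccessible $\theta>\lambda$ (take the next inaccessible after the least inaccessible above $\lambda$) and an inaccessible $\Lambda>\theta$. The key lemma yields that $\kappa$ is stationary-correct superstrong with inaccessible target $\Lambda$, and then Clause~(2) of $\mathcal{A}$ supplies the tallness of $\kappa$ with target $\theta$. Since $\theta$ is inaccessible, $\theta\in C^{(1)}$ and $\cf(\theta)=\theta>\lambda$, establishing enhanced $C^{(1)}$-tallness of $\kappa$; Theorem~\ref{CharacterizingCnsupercompacts} then delivers $C^{(1)}$-supercompactness.

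For the moreover clause, the direction $(\Leftarrow)$ is immediate from the definitions. For $(\Rightarrow)$, a supercompact $\delta$ is, by the just-proved first assertion, $C^{(1)}$-supercompact, hence $\lambda$-$C^{(1)}$-supercompact for every $\lambda>\delta$, and so the equivalence at each $\lambda$ is automatic. The one genuinely technical point throughout is the key lemma's propagation of the $\Lambda$-closure of $N$ to $V_\Lambda\cup\mathrm{Cub}_\Lambda\s N$; this is where the inaccessibility of $\Lambda$ is crucial, and I expect it to be the main -- though still routine -- obstacle to verify.
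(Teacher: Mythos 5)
There is a genuine gap: your key lemma is false, and its proof breaks at the decisive step. First, the lemma itself. If $\kappa$ is superstrong with target $\Lambda$, witnessed by $\iota\colon V\rightarrow M$ with $\iota(\kappa)=\Lambda$ and $V_{\Lambda}\subseteq M$, then $M$ and $V$ agree on inaccessibility below $\Lambda$; since $\kappa$ is measurable, the inaccessibles below $\kappa$ are unbounded in $\kappa$, so by elementarity $\Lambda=\iota(\kappa)$ is a limit of (genuine) inaccessibles. Hence no successor inaccessible --- in particular not the least inaccessible above $\theta$, which your argument allows as $\Lambda$ --- can ever be a superstrong target; this is precisely why axiom $\mathcal{A}$ asks only for \emph{tallness} at successor inaccessibles. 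Second, and independently, the derived-extender computation is wrong. From a $\Lambda$-supercompactness embedding $j\colon V\rightarrow N$ one has $j(\kappa)>\Lambda$, and the ultrapower by the extender with seeds in $\mathcal{I}=V_{\Lambda}\cup\mathrm{Cub}_{\Lambda}$ satisfies $j_{\mathcal{E}}(\kappa)>\Lambda$, not $j_{\mathcal{E}}(\kappa)=\Lambda$: the hull $\{j(f)(a)\mid a\in[\mathcal{I}]^{<\omega}\}$ contains every ordinal below $\Lambda$ (these are seeds) and also $\Lambda$ itself (as $j(f)(\{C\})$ with $f(\{x\})=\sup x$ and $C\in\mathrm{Cub}_{\Lambda}$ a seed), so the transitive collapse fixes $\Lambda$ and sends $j(\kappa)$ strictly above it. No choice of seeds repairs this: in Proposition~\ref{Prop1extendibility} the target of the derived embedding is $j(\delta)$, the image of the critical point under the \emph{original} embedding, and cannot be prescribed in advance.

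The paper circumvents this by reflecting into the supercompactness ultrapower rather than working in $V$. Given $\lambda$, pick a successor inaccessible $\theta>\lambda$ and let $j\colon V\rightarrow M$ witness $\theta^{+}$-supercompactness of $\delta$. Then $j\restriction V_{\delta+1}\in M$, so $M$ thinks $\delta$ is $(\delta+1)$-extendible with target $j(\delta)$, and Proposition~\ref{Prop1extendibility} applied \emph{inside $M$} makes $\delta$ stationary-correct superstrong there with inaccessible target $j(\delta)$. Since $\mathcal{A}$ holds in $M$ by elementarity and $\theta\in(\delta,j(\delta))$ is a successor inaccessible in $M$, Clause~(2) of $\mathcal{A}$ applied in $M$ yields a $(\delta,\theta)$-tall extender $E\in M$, which is a genuine $V$-extender because $M^{\theta^{+}}\subseteq M$. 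Your reduction to Theorem~\ref{CharacterizingCnsupercompacts} and your treatment of the moreover clause are fine once this substitute for the key lemma is in place.
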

\begin{proof}
Let $\delta$ be a supercompact cardinal and fix $\lambda>\delta$. Let $\theta$ be  a successor inaccessible 
above $\lambda$ -- this choice is possible by Clause~(1) of axiom $\mathcal{A}$. 

Let $j\colon V\rightarrow M$ be a witness for the  $\theta^+$-supercompactness of $\delta$. Then, 
$$M\models``\text{$\mathcal{A}\,\wedge\,\delta$ is $(\delta+1)$-extendible with target $j(\delta)$''.}$$
The first property follows from elementarity of $j$; 
the second from  $j\restriction V_{\delta+1}$ being a member of $M$ (by $\theta^+$-closure of $M$). In particular Proposition~\ref{Prop1extendibility} implies that $\delta$ is (inside $M$) stationary-correct superstrong with target $j(\delta)$.

From the perspective of $M$, $\theta$ is also a successor inaccessible. Invoking $\mathcal{A}$ inside $M$ the model thinks that   $\delta$ is tall with target $\theta$ and the same property holds true in $V$: Indeed, let 
 $E\in M$ be a $(\delta,\theta)$-$M$-extender witnessing this. Since $M^{\theta^+}\s M$  it follows that $E$ is a $(\delta,\theta)$-$V$-extender  and its $V$-ultrapower embedding $j_E\colon V\rightarrow M_E$ witnesses that $\delta$ is tall with target $\theta$. The argument in Theorem~\ref{CharacterizingCnsupercompacts} shows that $\delta$ is $\lambda$-$C^{(1)}$-supercompact.
\end{proof}
\begin{remark}
    Note that $\mathcal{A}$ actually implies that  every supercompact cardinal is $C^{(1)}$-supercompact with inaccessible targets.
\end{remark}

The next observation shows that under $\mathcal{A}$ every supercompact is $C^{(n)}$-supercompact, for each $n\geq 1$, in certain inner model. 
\begin{prop}
    Assume $\mathcal{A}$ holds. If $\kappa$ is supercompact then there is $M$ an inner model of $\mathrm{ZFC}$ where, for each $n\geq 1$, $M\models \kappa$ is $C^{(n)}$-supercompact.
\end{prop}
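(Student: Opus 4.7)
The plan is to take $M:=\mathrm{HOD}^V$ and verify that $\kappa$ is $C^{(n)}$-supercompact in $M$ for every $n\geq 1$. The strategy combines transfer of supercompactness from $V$ to $\mathrm{HOD}$ with a boost in $\Sigma_n$-correctness stemming from the fact that $\mathrm{HOD}$ is $\Sigma_2$-definable in $V$.

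First, I would verify that $\kappa$ remains supercompact in $\mathrm{HOD}$. Given $\lambda>\kappa$, Theorem~\ref{AimpliesCn} supplies a $\lambda^+$-supercompactness embedding $j\colon V\to N$ with $j(\kappa)$ inaccessible in $V$. The restriction $j\restriction \mathrm{HOD}\colon \mathrm{HOD}\to \mathrm{HOD}^N$ is elementary, and the closure $N^{\lambda^+}\subseteq N$ ensures that the normal fine ultrafilter over $\mathcal{P}_\kappa(\lambda)^{\mathrm{HOD}}$ derived from $j$ is ordinal-definable and therefore sits inside $\mathrm{HOD}$. Consequently $\kappa$ is $\lambda$-supercompact inside $M$.

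Next, I would handle the correctness of the target inside $M$. Since $\mathrm{HOD}$ is $\Sigma_2$-definable in $V$, there is an absolute constant $c$ such that, whenever $\mathrm{HOD}^{V_\theta}=\mathrm{HOD}\cap V_\theta$, one has $\theta\in C^{(n+c)}_V \Rightarrow \theta\in C^{(n)}_{\mathrm{HOD}}$. Because $\kappa$ is supercompact in $V$, there are class-many successor-inaccessible cardinals $\theta$ that are $\Sigma_{n+c}$-correct in $V$ and satisfy the absoluteness clause; fix one such $\theta>\lambda$. Applying Clause~(2) of axiom $\mathcal{A}$ inside a sufficiently closed supercompactness embedding, as in the proof of Theorem~\ref{AimpliesCn}, one produces a $(\kappa,\theta)$-extender whose ultrapower witnesses tallness of $\kappa$ with target $\theta$. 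Combined with the supercompactness from the first step, Theorem~\ref{CharacterizingCnsupercompacts} applied inside $M$ delivers the $C^{(n)}$-supercompactness of $\kappa$ in $M$.

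The main obstacle is ensuring that the tall embedding produced by Clause~(2) of $\mathcal{A}$ can be internalized to $\mathrm{HOD}$: axiom $\mathcal{A}$ asserts the existence of suitable extenders in $V$, whereas to obtain $\lambda$-$C^{(n)}$-supercompactness \emph{inside} $M$ one needs the corresponding $(\kappa,\theta)$-extender to live in $\mathrm{HOD}$. Bridging this gap relies on the definable well-ordering of $\mathrm{HOD}$ to pick a canonical extender, together with a verification that the required closure and $\Sigma_n$-correctness properties persist under the restriction to $\mathrm{HOD}$. This interplay between the reflection encoded in $\mathcal{A}$ and the definability of $\mathrm{HOD}$ is where the heart of the argument lies.
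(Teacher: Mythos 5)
Your proposal has two genuine gaps, either of which is fatal. First, the choice $M:=\mathrm{HOD}$ breaks down at the very first step: supercompactness of $\kappa$ in $V$ does not in general descend to $\mathrm{HOD}$. The ultrafilter on $\mathcal{P}_\kappa(\lambda)^{\mathrm{HOD}}$ derived from $j$ is defined from $j$, hence from a particular normal fine measure chosen in $V$, and there is no reason for that object to be ordinal definable; it is in fact known to be consistent that a supercompact cardinal is not even weakly compact in $\mathrm{HOD}$. Second, and more fundamentally, the plan of locating \emph{successor-inaccessible} cardinals $\theta\in C^{(n+c)}$ of $V$ cannot succeed once $n+c\geq 2$: the assertion ``there is an inaccessible cardinal above $\alpha$'' is $\Sigma_2$, so if $\theta$ is the least inaccessible above its inaccessible predecessor $\varepsilon$ then $V_\theta$ gets this $\Sigma_2$ statement (with parameter $\varepsilon$) wrong, whence $\theta\notin C^{(2)}$. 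Since Clause~(2) of axiom $\mathcal{A}$ only supplies tall targets at successor inaccessibles --- which are merely $\Sigma_1$-correct --- one cannot extract from $\mathcal{A}$ tall targets lying in $C^{(n)}_V$ for $n\geq 2$. This obstruction is precisely why the proposition asserts $C^{(n)}$-supercompactness in an inner model rather than in $V$.

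The paper's proof avoids both problems by taking $M:=V_{j(\kappa)}$, where $j(\kappa)$ is an inaccessible target of a $C^{(1)}$-supercompactness embedding furnished by Theorem~\ref{AimpliesCn}. In that model $\kappa$ is automatically supercompact, and by Tsaprounis \cite{TsaChain} the set of tall targets of $\kappa$ below $j(\kappa)$ contains a ${\geq}\kappa^+$-club; this set therefore meets $C^{(n)}$ \emph{as computed in $V_{j(\kappa)}$} (which is a club in $j(\kappa)$) at points of any prescribed large cofinality, so $\kappa$ is supercompact and enhanced $C^{(n)}$-tall in $V_{j(\kappa)}$, and Theorem~\ref{CharacterizingCnsupercompacts} applied inside $V_{j(\kappa)}$ concludes. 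If you wish to salvage your outline, the essential repair is to replace $\mathrm{HOD}$ by a rank-initial segment of this kind, where the relativized correctness classes are clubs and no transfer of measures into a definable inner model is needed.
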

\begin{proof}
    Under $\mathcal{A}$ if $\kappa$ is supercompact then it  is $C^{(1)}$-supercompact with inaccessible target $j(\kappa)$ (Theorem~\ref{AimpliesCn}). By Tsaprounis \cite[Proposition~2.8]{TsaChain}, $$C:=\{h(\kappa)<j(\kappa)\mid \text{$\kappa$ is tall with target $h(\kappa)$} \}$$
    contains a ${{\geq}\kappa^+}$-club. 
   Since for each $\lambda<j(\kappa)$ and $n<\omega$ the set $$C\cap C^{(n)}\cap  E^{j(\kappa)}_{>\cf(\lambda)}$$ is non-empty it follows that $$\text{$V_{j(\kappa)}\models ``\ZFC+\kappa$ is supercompact and enhanced $\Cn$-tall''}.$$ By Theorem~\ref{CharacterizingCnsupercompacts} $V_{j(\kappa)}$ models that $\kappa$ is $C^{(n)}$-supercompact.  
\end{proof}

Another interesting aspect of axiom $\mathcal{A}$ finds its  roots in the inner model program at the level of  supercompactness \cite{WooBookSuperscompacts}.  In \cite{FernandesSchidler} Fernandes and Schindler showed that if there is no inner model with a Woodin cardinal and  $L[E]$ is iterable  then, in $L[E]$, a cardinal is tall if and only if it is either strong or a measurable limit of strong cardinals. A related question is whether a similar equivalence is available between the classes of superstrong cardinals  and tall cardinals with strong limit targets. Especially interersting is whether this equivalence is compatible with the existence of a supercompact.  The said equivalence is postulated by the \emph{Extender Embedding Axiom}: 
\begin{definition}[Woodin, 2018]\label{EEA}
    The \emph{Extender Embedding Axiom} (EEA) says that every cardinal $\kappa$ carrying an elementary embedding $j\colon V\rightarrow M$ with $\crit(j)=\kappa$, $M^\kappa\s M$  and $j(\kappa)\in C^{(1)}$ is superstrong with target $j(\kappa).$
\end{definition}
Observe that the superstrong embeddings postulated by EEA might be different from the departing $j\colon V\rightarrow M$. In fact, a version of EEA saying that each $j\colon V\rightarrow M$ as above witnesses superstrongness is inconsistent. For instance, this version of EEA would imply that every $C^{(n)}$-supercompact cardinal is $C^{(n)}$-extendible, which is not possible by virtue of \cite{PovOmega}.

\smallskip

At the present time is unclear to the author whether EEA is consistent with  a supercompact cardinal -- nevertheless, Woodin has conjectured that it will follow from his axiom $V=\mathrm{Ultimate-}L$ (see \cite{midrasha}):

\begin{conjecture*}[Woodin, 2018]
$``\mathrm{ZFC}+V=\mathrm{Ultimate-}L$'' proves $\mathrm{EEA}$.
\end{conjecture*}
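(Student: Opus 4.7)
Since this is Woodin's open conjecture, any sketch I can offer is necessarily a program rather than a genuine proof. The plan is to work inside the conjectural $V=\mathrm{Ultimate}\text{-}L$ and, given an elementary embedding $j\colon V\rightarrow M$ with $\crit(j)=\kappa$, $M^\kappa\s M$ and $j(\kappa)\in C^{(1)}$, to manufacture a possibly distinct embedding $\iota\colon V\rightarrow N$ witnessing that $\kappa$ is superstrong with target $j(\kappa)$.

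First I would invoke the expected UB-extender sequence $\vec{E}$ of Ultimate-$L$ together with Woodin's conjectural universality/capturing principle for that sequence. For each $\lambda<j(\kappa)$ the derived $(\kappa,\lambda)$-extender $E(j,\lambda)$ belongs to $M\cap V_{j(\kappa)}$ (using $M^\kappa\s M$ and the strong-limit-ness of $j(\kappa)$). Universality should then provide a short extender $F_\lambda$ on $\vec{E}$ with critical point $\kappa$ whose induced ultrapower $j_{F_\lambda}\colon V\rightarrow M_{F_\lambda}$ agrees with $j$ on $V_\lambda$; in particular $j_{F_\lambda}(\kappa)\le j(\kappa)$ and the ultrapower is correct about $V_\lambda$.

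Next would come the amalgamation step: using the coherence of $\vec{E}$ and the fact that $j(\kappa)$ is a strong limit realized inside Ultimate-$L$, one glues the directed system $\{F_\lambda\mid\lambda<j(\kappa)\}$ into a single $(\kappa,j(\kappa))$-extender $E^*$ on $\vec{E}$ and forms the ultrapower $\iota\colon V\rightarrow N:=\ult(V,E^*)$. By construction $\iota(\kappa)=j(\kappa)$ and $V_{j(\kappa)}\s N$, which is exactly superstrongness of $\kappa$ with target $j(\kappa)$ as required by $\mathrm{EEA}$.

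The main obstacle is of course the entire Ultimate-$L$ program itself: the existence of Ultimate-$L$ as an iterable canonical inner model of a supercompact, the UB-coherence of its extender sequence, and especially the universality/capturing principle invoked above are all currently open. Modulo those inputs the remaining combinatorics is a fairly standard direct-limit/extender-algebra argument; but all the genuinely hard mathematical content is absorbed into the fine-structural theory of Ultimate-$L$, which is precisely why the assertion is advertised as a conjecture rather than a theorem. Finally, it is worth noting that the present paper's construction of a model of $\mathcal{A}$ together with the incompatibility of $\mathcal{A}$ with $\mathrm{EEA}$ shows that any proof of this conjecture must genuinely use the hypothesis $V=\mathrm{Ultimate}\text{-}L$ and cannot proceed from weaker fine-structural assumptions alone.
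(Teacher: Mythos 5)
This statement is not something the paper proves, nor something anyone has proved: it is stated (twice) as an open conjecture of Woodin, and the paper only uses it as a hypothesis to contrast with axiom $\mathcal{A}$ (Theorem~\ref{AprecludesEEA}). So there is no ``paper proof'' to match, and your sketch, as you yourself concede, is a conditional program: the existence and iterability of Ultimate-$L$, the coherence of its extender sequence, and the universality/capturing principle you invoke are precisely the open content, so nothing is actually proved.

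Beyond that, there is a concrete mathematical gap in the sketch even granting those inputs. The hypothesis of EEA only gives a \emph{tall} embedding $j\colon V\rightarrow M$ with $M^\kappa\s M$ and $j(\kappa)\in C^{(1)}$; such an $M$ need not contain $V_\lambda$ for any $\lambda$ in $(\kappa,j(\kappa))$, and consequently the $(\kappa,\lambda)$-extenders derived from $j$ need not have any strength. Hence ``agreement with $j$ on $V_\lambda$'' for an extender $F_\lambda$ on the sequence does not yield $V_\lambda\s \Ult(V,F_\lambda)$, and gluing the derived extenders can at best reproduce the $(\kappa,j(\kappa))$-extender of $j$ itself, whose ultrapower witnesses tallness but not superstrongness --- closing exactly that gap is the whole point of EEA. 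Any genuine argument would have to upgrade tallness to strength via the fine structure of Ultimate-$L$ (in the spirit of the Fernandes--Schindler analysis of tall versus strong in $L[E]$, which the paper cites as the motivation), and that step is absent from your outline. One small imprecision in your closing remark: the paper shows that $\mathcal{A}$ together with the existence of a supercompact cardinal refutes EEA; $\mathcal{A}$ alone is not shown to be incompatible with it.
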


Contingent to Woodin's  conjecture, the next shows that under  $V=\mathrm{Ultimate-}L$, $C^{(n)}$-supercompacts are  almost-$C^{(n)}$-extendibles in disguise.  
\begin{theorem}[{$\mathrm{EEA}$}]\label{AprecludesEEA} Every $C^{(n)}$-supercompact is almost-$C^{(n)}$-extendible. 
   
   In particular, $\mathcal{A}+``$There is a supercompact cardinal'' disproves $\mathrm{EEA}$. 
\end{theorem}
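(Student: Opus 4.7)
The plan is to funnel a $C^{(n)}$-supercompact embedding directly into EEA and then combine the result with the fact (recalled in \S\ref{PreliminariesLargeCardinals}, from \cite[\S2]{PovOmega}) that every almost-$C^{(1)}$-extendible is a limit of supercompacts. For the main claim I would fix a $C^{(n)}$-supercompact cardinal $\kappa$ and an arbitrary $\lambda>\kappa$, and check each of the three ingredients of $\lambda$-almost-$C^{(n)}$-extendibility separately. The $\lambda$-supercompactness of $\kappa$ is immediate from $C^{(n)}$-supercompactness.

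Next, I would invoke $C^{(n)}$-supercompactness at $\lambda$ to obtain $j\colon V\rightarrow M$ with $\crit(j)=\kappa$, $\lambda<j(\kappa)$, $M^\lambda\s M$ and $j(\kappa)\in C^{(n)}$. Assuming $n\geq 1$ (the case relevant for the ``in particular''), $j(\kappa)\in C^{(1)}$, so $j$ satisfies the hypotheses of EEA; the axiom then delivers a (possibly different) embedding witnessing that $\kappa$ is superstrong with target $\mu:=j(\kappa)$. This target already lies in $C^{(n)}$ by the choice of $j$, and $\cf^V(\mu)>\lambda$ holds automatically: $\mu$ is regular in $M$, while $M^\lambda\s M$ would otherwise import a $V$-cofinal sequence of length $\leq\lambda$ into $M$, contradicting regularity there. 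Since $\lambda$ was arbitrary, $\kappa$ is almost-$C^{(n)}$-extendible.

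For the ``in particular'' I would argue by contradiction. Suppose $\mathcal{A}$, EEA and a supercompact cardinal all hold. By Theorem~\ref{AimpliesCn}, under $\mathcal{A}$ every supercompact cardinal is $C^{(1)}$-supercompact; the main claim with $n=1$ then upgrades every supercompact to an almost-$C^{(1)}$-extendible. However, every almost-$C^{(1)}$-extendible is a limit of supercompacts by \cite[\S2]{PovOmega}. Applying this to the least supercompact cardinal produces the required contradiction.

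The only step I expect to need care is verifying that the superstrong embedding delivered by EEA inherits the \emph{correct} target, $\mu=j(\kappa)$, so that membership in $C^{(n)}$ and the cofinality estimate can both be read off from the original $C^{(n)}$-supercompact embedding rather than being reasoned out for the (possibly unknown) witness of superstrongness. This is built into the statement of EEA, so in the end no substantive work remains beyond the above bookkeeping.
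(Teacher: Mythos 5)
Your proposal is correct and follows the same route as the paper: feed the $C^{(n)}$-supercompactness embedding into EEA to get superstrongness with target $j(\kappa)\in C^{(n)}$ of $V$-cofinality ${>}\lambda$, and then combine Theorem~\ref{AimpliesCn} with the fact from \cite{PovOmega} that almost-$C^{(1)}$-extendibles are limits of supercompacts. You actually supply slightly more detail than the paper (the closure argument for $\cf(j(\kappa))>\lambda$ and the explicit restriction to $n\geq 1$ so that the EEA hypothesis $j(\kappa)\in C^{(1)}$ is met), both of which are sound.
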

\begin{proof}
    Let $n<\omega$ and $\delta$ be a $C^{(n)}$-supercompact cardinal. Fix an arbitrary $\lambda>\delta$. Clearly, $\delta$ is $\lambda$-supercompact. Also, since $\delta$ is $\lambda$-$C^{(n)}$-supercompact, EEA yields an embedding $j\colon V\rightarrow M$ with $\crit(j)=\delta$, $j(\delta)\in C^{(n)}$, $\cf(j(\delta))>\lambda$ and $V_{j(\delta)}\s M$. Combining these two facts we conclude that $\delta$ is almost-$C^{(n)}$-extendible. For the second claim invoke \cite[Proposition~2.3]{PovOmega} to infer that every almost-$C^{(1)}$-extendible is a limit of supercompacts. Under EEA this is equivalent to saying that every $C^{(1)}$-supercompact is a limit of supercompacts, which is not the case under $\mathcal{A}$ (by Theorem~\ref{AimpliesCn}).
\end{proof}
In spite $V=\mathrm{Ultimate-}L$ may entail EEA --which in turn rules out $\mathcal{A}$-- in the next section we prove that $\mathcal{A}$ is consistent with  a proper class of supercompact cardinals -- and in fact with $I_0$ cardinals.

\section{The proof of the main theorem}

In this section we prove the consistency of $\mathcal{A}$   with strong large cardinals.  

\begin{theorem}\label{ConsistencyofA}
    Assume  the $\mathrm{GCH}$ holds and that $\kappa$ is a huge cardinal. Then, the theory $\text{$``\mathrm{ZFC}+\mathcal{A}+$Vop\v{e}nka's Principle''}$ is consistent. 
\end{theorem}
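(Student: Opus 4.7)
The plan is to obtain the desired model through a two-step class forcing over a hugeness ground model. First, one uses McAloon's iteration $\mathbb{P}$ to pass to a model $V_1$ satisfying $\mathrm{CCA}^*$ in which $\kappa$ is still huge (Lemma \ref{lemma: McAloonOK}); the coding $\mathrm{CCA}^*$ provides a definable well-ordering that will be used throughout to ensure that the subsequent class iteration is correctly shifted by the extender embeddings needed later (cf.\ Clause~\eqref{C1HI}). Next, working in $V_1$, one extracts (via the indiscernibility Lemma~\ref{indiscernibility}, to which the excerpt alludes) a proper class $\mathcal{S}\subseteq\mathrm{Ord}$ of large cardinals with two crucial properties: (i) $\mathcal{S}$ is definable from the sequence of generics that the main iteration will produce, and (ii) for every stationary-correct superstrong $\delta$ with inaccessible target $\lambda$, the set $\mathcal{S}\cap(\delta,\lambda)$ is contained in the $\geq\delta^+$-club of enhanced-superstrong targets of $\delta$ furnished by Fact~\ref{KeyFactAboutSuperstrong}.

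Second, in $V_1$ one performs a class-length Easton-support iteration $\mathbb{I}$ whose only nontrivial stages are indexed by $\mathcal{S}$. At each $\chi\in \mathcal{S}$ one forces with $\mathbb{K}_\chi \ast \dot{\mathbb{C}}(\varepsilon_\chi,\chi)$, where $\varepsilon_\chi$ is the cardinal successor of $\sup(\mathcal{S}\cap \chi)$. The Kurepa-tree factor $\mathbb{K}_\chi$ is used to kill any residual measurability and, crucially, is $\chi$-closed and \emph{almost $\chi$-directed-closed} (Proposition~\ref{PropertiesKonigposet}), so that the tail of the iteration remains highly closed from the perspective of $V_\chi$; the coding factor $\mathbb{C}(\varepsilon_\chi,\chi)$ makes $\chi$ the first inaccessible above $\varepsilon_\chi$ in the extension, thereby ensuring that the successor inaccessibles of $V_2:=V_1^{\mathbb{I}}$ are exactly the elements of $\mathcal{S}$, and also renders $\mathcal{S}$ definable in $V_2$. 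Standard Easton-style lifting arguments (using the GCH, $\mathrm{CCA}^*$, and the high closure of the tail forcings) show that $\kappa$ remains huge in $V_2$, so $V_\kappa^{V_2}$ witnesses the consistency of Vop\v{e}nka's Principle along with everything we verify below.

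To deduce Axiom~$\mathcal{A}$ in $V_\kappa^{V_2}$, fix $\delta$ stationary-correct superstrong in $V_2$ with inaccessible target $\lambda$, and let $\theta\in(\delta,\lambda)$ be a successor inaccessible, necessarily with $\theta\in \mathcal{S}$. First one pulls the situation back: the stationary-correct superstrongness of $\delta$ with target $\lambda$ reflects to $V_1$ (via preservation of the relevant extender by the highly closed tail of $\mathbb{I}$). By the choice of $\mathcal{S}$ in Step~1, the ordinal $\theta$ belongs to the $\geq\delta^+$-club of enhanced superstrong (hence tall) targets of $\delta$ in $V_1$, so $V_1$ possesses a $(\delta,\theta)$-extender $E$ with $(M_E)^\delta\subseteq M_E$ and $j_E(\delta)=\theta$. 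Finally, one lifts $j_E\colon V_1\to M_E$ to $V_2$ via the standard master-condition technology: the initial segment $\mathbb{I}_\delta$ lifts by elementarity into an initial segment of $j_E(\mathbb{I})$, and the tail $\mathbb{I}_{[\delta,\mathrm{Ord})}$ is transferred using the almost-$\delta$-directed-closure of $\mathbb{K}_\delta$, combined with the fact that the $\mathrm{CCA}^*$ well-ordering guarantees $j_E(\mathbb{I})$ restricted to $[\delta,\theta)$ coincides with $\mathbb{I}_{[\delta,\theta)}$. The lifted embedding is then the required tallness witness in $V_2$ with target $\theta$.

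The main obstacle is precisely this last lifting step: one must carry the $(\delta,\theta)$-extender embedding through a class-length iteration while preserving $M$-closure under $\delta$-sequences \emph{and} ensuring the iteration shifts to an $M_E$-iteration whose initial segment matches $\mathbb{I}_{[\delta,\theta)}$ on the nose. The $\mathrm{CCA}^*$ coding and the carefully chosen closure/directedness properties of $\mathbb{K}_\chi$ and $\mathbb{C}(\varepsilon_\chi,\chi)$ are the key technical ingredients that make this shift coherent; verifying that $\mathcal{S}$ can be defined so as to simultaneously satisfy (i) and (ii) above --- i.e., that a single indiscernibility lemma uniformizes the targets across all stationary-correct superstrong $\delta$ in $V_2$ --- is the delicate heart of the argument.
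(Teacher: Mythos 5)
The central gap is in your final lifting step. To witness tallness of $\delta$ with target $\theta$ in the extension you must lift an extender embedding $j_{E}\colon V_1\to M_{E}$ with $j_{E}(\delta)=\theta$ through the main iteration, and for that you need an $M_{E}[G_\varepsilon]$-generic filter for the quotient $j_{E}(\mathbb{I}_\delta)/G_\varepsilon$ that lives \emph{inside} the final model. Your claim that ``$j_E(\mathbb{I})$ restricted to $[\delta,\theta)$ coincides with $\mathbb{I}_{[\delta,\theta)}$'' is false and cannot be arranged by any coding: in $M_{E}$ the iteration $j_{E}(\mathbb{I}_\delta)$ between $\varepsilon$ and $\theta$ runs through the $M_{E}$-inaccessibles up to $\theta=j_{E}(\delta)$ exactly as $\mathbb{I}_\delta$ runs up to $\delta$, whereas the true iteration at stage $\varepsilon$ collapses the interval $(\varepsilon,\theta)$ so as to make $\theta$ the next inaccessible. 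These are entirely different forcings, and neither master conditions nor directed closure manufactures the missing $M_{E}$-generic. The paper's solution --- the real engine of the construction --- is to build that generic into the iteration itself: at stage $\varepsilon$ one forces, besides $\mathbb{K}_\varepsilon$ and the coding poset, with the full-support product $\mathbb{T}^{\mathrm{NS}}_\varepsilon=\prod_{F\in\mathscr{E}_{\varepsilon,\mathcal{S}}}j_{F^*}(\mathbb{P}_{\crit(F)})/(G_\varepsilon\ast K_\varepsilon)$ over \emph{all} candidate extenders, so that the $V$-generic projects onto the needed $M_{E^*}$-generic. This anticipation device is entirely absent from your proposal, and without it Lemma~\ref{endgame} has no analogue.

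A second problem is your placement of the Kurepa forcing. You force with $\mathbb{K}_\chi$ at every $\chi\in\mathcal{S}$, whereas the paper applies $\mathbb{K}_\varepsilon$ only at inaccessible stages where $\mathcal{S}\cap\varepsilon$ is \emph{non-stationary}, and deliberately omits it when $\mathcal{S}\cap\varepsilon$ is stationary (\textbf{Case}~$(\beth)$). That omission is what makes the tail $\delta^{+}$-\emph{directed}-closed above such $\delta$ and is what lets the measurable and $C^{(n)}$-extendible cardinals below $\kappa$ survive; since $\mathcal{U}$-many $\delta\in\mathcal{S}$ have $\mathcal{S}\cap\delta$ stationary, your scheme kills their measurability via Proposition~\ref{PropertiesKonigposet}(4) and with it the supply of large cardinals needed for Vop\v{e}nka's Principle in $V[G]_\kappa$. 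Relatedly, ``standard Easton-style lifting shows $\kappa$ remains huge'' is neither attempted nor needed in the paper: $\mathcal{S}$ is a measure-one subset of $\kappa$, the final model is $V[G]_\kappa$, and VP there is obtained by showing that $\mathcal{U}$-many $\delta<\kappa$ remain $C^{(n)}$-extendible, via reflected $C^{(n)}_{\mathbb{P}}$-$\mathcal{S}$-extendibility and dense directed-closed subiterations $\mathbb{D}^\delta$.
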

Recall that \emph{Vop\v{e}nka's Principle}  is the schema asserting that every class of structures $\mathscr{C}$ in the same language does carry different $\mathcal{M},\mathcal{N}\in\mathscr{C}$ and an elementary embedding $j\colon \mathcal{M}\rightarrow\mathcal{N}.$ For an inaccessible cardinal $\kappa$, $V_\kappa\models \mathrm{VP}$ is equivalent to $\kappa$ being a Vop\v{e}nka cardinal (see p.~\pageref{Vopenka}). By virtue of Bagaria's \cite[Corollary~4.15]{Bag} the model of Theorem~\ref{ConsistencyofA} is plenty of supercompact cardinals -- in fact, it contains a proper class of $C^{(n)}$-extendible cardinals, for all $n\geq 1$. Thus, the consistency of $\mathcal{A}$ is not precluded by the strongest known large cardinals. In other words, axiom $\mathcal{A}$ is consistent with a mathematical universe rich and intricate (and this is very far from $\mathrm{Ultimate-}L$).
\subsection{Forcing  $\mathcal{A}$}\label{SectionForcingW}
By  forcing preliminary with McAloon iteration (Lemma~\ref{lemma: McAloonOK}) we may assume that our ground model carries a well-ordering $\prec$ satisfying
\begin{equation}\label{eq: dagger}
   \tag{$\dagger$}
  a  \prec b\;\Longleftrightarrow\; V_\lambda\models\text{$``a\prec b$'',}
\end{equation}
for each $a,b\in V_\lambda$ and $\lambda$ inaccessible. The absoluteness of $\prec$ will be crucial in the design of the iteration leading to a model of axiom $\mathcal{A}.$

The proof of Theorem~\ref{ConsistencyofA} will be broken into a series of lemmas. Let us fix  a normal measure $\mathcal{U}$  on $\kappa$ as in the statement of Lemma~\ref{lemmaalmoshuge}. 
\begin{lemma}\label{indiscernibility}
    There is $\mathcal{S}\in \mathcal{U}$ consisting of measurable cardinals whose members are indiscernibles 
    in the following strong sense: Let $\Phi(x,x_0,\dots, x_{n})$ be a first order formula in $\mathcal{L}_\in$, 
    $\alpha_0<\cdots<\alpha_n$ and $\beta_0<\cdots<\beta_n$  members of $\mathcal{S}$ and $a\in V_\kappa$ whose rank is less than $\min(\{\alpha_i\mid i\leq n\}\cup\{\beta_i\mid i\leq n\}).$ Then,
   $$ \text{$\langle V_\kappa,\in\rangle\models \Phi(a, \alpha_0,\dots, \alpha_n)\,\Longleftrightarrow\, \langle V_\kappa,\in\rangle\models \Phi(a, \beta_0,\dots, \beta_n).$}$$ Moreover, $\{\delta<\kappa\mid \text{$\mathcal{S}\cap \delta$ is stationary in $\delta$}\}\in \mathcal{U}$.
\end{lemma}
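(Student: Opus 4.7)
The plan is to extract $\mathcal{S}$ from $\mathcal{U}$ by a Rowbottom-style diagonal argument tailored to the normal ultrafilter at $\kappa$. First I would fix a rank-respecting enumeration $V_\kappa=\langle a_\xi\mid \xi<\kappa\rangle$ with the property that $\{a_\xi\mid\xi<\delta\}=V_\delta$ for every inaccessible $\delta<\kappa$. This is possible because, for consecutive inaccessibles $\delta'<\delta$, the set $V_\delta\setminus V_{\delta'}$ has cardinality $\delta$ and so can be listed in the slots $[\delta',\delta)$. The upshot --- to be used throughout --- is that whenever $a\in V_\alpha$ with $\alpha$ inaccessible, the enumeration index of $a$ is strictly below $\alpha$.

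Next, for each pair $(\xi,\Phi)$ with $\Phi(x,x_0,\dots,x_n)$ an $\mathcal{L}_\in$-formula, I would invoke the standard fact that any normal measure on a measurable cardinal contains a homogeneous set for every two-coloring of $[\kappa]^{n+1}$; applied to the coloring $(\alpha_0,\dots,\alpha_n)\mapsto V_\kappa\models\Phi(a_\xi,\alpha_0,\dots,\alpha_n)$ on $[\kappa\setminus(\rank(a_\xi){+}1)]^{n+1}$, this produces $H_{\xi,\Phi}\in\mathcal{U}$. For each $\alpha<\kappa$ set
$$B_\alpha:=\bigcap\{H_{\xi,\Phi}\mid \xi<\alpha\text{ and }\Phi\in\mathrm{Fml}(\mathcal{L}_\in)\};$$
this is an intersection of fewer than $\kappa$ many $\mathcal{U}$-large sets, hence $B_\alpha\in\mathcal{U}$ by $\kappa$-completeness. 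Now define
$$\mathcal{S}:=\{\alpha<\kappa\mid \alpha\in B_\alpha\}\cap\{\delta<\kappa\mid \delta\text{ is inaccessible and measurable}\},$$
which lies in $\mathcal{U}$ by normality of $\mathcal{U}$ (for the diagonal part) and by reflection from hugeness (for the auxiliary sets, since the set of measurables below $\kappa$ is easily in $\mathcal{U}$).

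To verify indiscernibility, take $\alpha_0<\dots<\alpha_n$ and $\beta_0<\dots<\beta_n$ in $\mathcal{S}$ together with $a\in V_\kappa$ of rank below $\min(\alpha_0,\beta_0)$. Since $\alpha_0$ and $\beta_0$ are inaccessible, the rank-respecting enumeration forces $a=a_\xi$ with $\xi<\alpha_0$ and $\xi<\beta_0$. Consequently each $\alpha_i\in B_{\alpha_i}\subseteq B_{\alpha_0}\subseteq H_{\xi,\Phi}$ and analogously for the $\beta_i$'s, whence the homogeneity of $H_{\xi,\Phi}$ pins down a common truth value for the two instances of $\Phi(a,\cdot,\dots,\cdot)$. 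For the ``Moreover'' clause, since $\mathcal{S}\in\mathcal{U}$ and $\mathcal{U}$ is normal, $\mathcal{S}$ is stationary in $\kappa$ and $j(\mathcal{S})\cap\kappa=\mathcal{S}$, so $M\models j(\mathcal{S})\cap\kappa\text{ is stationary in }\kappa$; elementarity of $j$ pulls this back to $\{\delta<\kappa\mid \mathcal{S}\cap\delta\text{ is stationary in }\delta\}\in\mathcal{U}$. The only delicate step is linking ``low-rank parameter'' to ``low enumeration index'' --- handled by the rank-respecting enumeration at inaccessible levels --- after which the argument is the textbook normal-measure diagonal construction.
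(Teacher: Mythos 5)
Your proposal is correct and follows essentially the same route as the paper: a Rowbottom homogeneous set in $\mathcal{U}$ for each parameter--formula pair, glued together by a diagonal intersection using normality, with the ``moreover'' clause obtained by reflecting the stationarity of $\mathcal{S}$ through the ultrapower $j\colon V\rightarrow M$. The only cosmetic difference is that you index parameters by a rank-respecting enumeration and use one two-coloring per formula, whereas the paper indexes by the elements $a\in V_\kappa$ themselves and packages the rank condition into a single three-valued coloring; the substance is identical.
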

\begin{proof}
    Let $\langle \Phi_n\mid n<\omega\rangle$ be an enumeration of the first order formulae in  the language of set theory. 
    For each $a\in V_\kappa$   define  a coloring $c_a\colon [\kappa]^{<\omega}\rightarrow 3$ as 
$$c_a(\vec\alpha):=\begin{cases}
    0, & \text{if $\rank(a)\geq \min(\vec\alpha)$};\\
        1, & \text{if $\rank(a)< \min(\vec\alpha)$ and $\langle V_\kappa,\in\rangle\models \Phi(a,\vec\alpha)$};\\
         2, & \text{if $\rank(a)< \min(\vec\alpha)$ and $\langle V_\kappa,\in\rangle\models \neg \Phi(a,\vec\alpha)$};
    \end{cases}
    $$
    By Rowbottom's theorem \cite[Theorem~7.17]{Kan}, for each $a\in V_\kappa$, there is  a $c_a$-homogeneous set $H_a\in \mathcal{U}$; to wit, for each $n<\omega$, $c_a\restriction [H_a]^n$ is constant.  Since $\mathcal{U}$ is uniform it is evident that $c_a`` [H_a]^n\neq \{0\}$ for all $n<\omega$.
    
    Define $H:=\{\alpha<\kappa\mid \forall a\in V_\kappa\,(\rank(a)<\alpha\,\Rightarrow\, \alpha\in H_a)\}.$ It is routine to check that $H\in \mathcal{U}$ and that members of $H$ are indiscernibles in the desired sense. Finally, the moreover part follows from normality of $\mathcal{U}$: If $j\colon V\rightarrow M$ is the ultrapower induced by $\mathcal{U}$ then $\text{$M\models``j(\mathcal{S})\cap\kappa$ is stationary in $\kappa$''}$. 
\end{proof}

For the rest of this section we adopt the following  notations:
\begin{notation}\label{ConvenientNotation}
    For each inaccessible  $\varepsilon<\kappa$   let us denote:
    
   $\br$ $\lambda_\varepsilon$ the next member of $\mathcal{S}$ past $\varepsilon$; 

    $\br$ $\mathscr{E}_{\varepsilon, \mathcal{S}}:=\{E\in V_\kappa\mid \exists\lambda\in\mathcal{S}\, (V_\kappa\models \Phi(E,\varepsilon, \mathcal{S}\cap \varepsilon, \lambda))\}$ where $\Phi(E,\varepsilon, \mathcal{S}\cap \varepsilon, \lambda)$ is the conjunction of the following two sentences:
        \begin{enumerate}
            \item $E$ is an extender with length $\varepsilon$ and strength $\varepsilon$. 
    \item There is a superstrong extender $F$ such that
    \begin{itemize}
        \item  $F\restriction\varepsilon=E$ (in particular, $\crit(E)=\crit(F)$);
        \item $j_{F}(\crit(E))=\lambda;$
        \item $(M_F)^{\crit(F)}\s M_F$;
        \item   $j_F(\mathcal{S})\cap (\varepsilon+1)=\mathcal{S}\cap (\varepsilon+1)$.\footnote{Note that $j_F(\mathcal{S})\cap (\varepsilon+1)$ is the same as $j_F(\mathcal{S}\cap\varepsilon)\cap (\varepsilon+1)$. As a result, the above statement can be easily stated in terms of the parameters $\varepsilon$ and $\mathcal{S}\cap\varepsilon.$}
    \end{itemize}

        \end{enumerate}

        $\br$ For $i\in\{0,1\}$,  $``\sigma\in^1\mathcal{S}$'' means $``\sigma\in\mathcal{S}$'' and $``\sigma\in^0\mathcal{S}$'' means $``\sigma\notin\mathcal{S}$''
\end{notation}
The next lemma reveals the role of indiscernibility of cardinals in $\mathcal{S}$:
\begin{lemma}\label{LemmaAboutPhi}
Let $\varepsilon<\kappa$ inaccessible and $E\in V_\kappa$ an extender with length and strength $\varepsilon$. Then, 
$$E\in \mathscr{E}_{\varepsilon,\mathcal{S}}\;\Longleftrightarrow\;V_\kappa\models \Phi(E,\varepsilon,\mathcal{S}\cap \varepsilon,\lambda_\varepsilon).$$
Moreover, for each $E\in \mathscr{E}_{\varepsilon,\mathcal{S}}$ there is a $\prec$-minimal extender $E^*$ witnessing it
\end{lemma}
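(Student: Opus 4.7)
The plan is to prove both directions via the $\mathcal{S}$-indiscernibility recorded in Lemma~\ref{indiscernibility}, together with the absolute well-ordering $\prec$ from \eqref{eq: dagger}. The direction $(\Leftarrow)$ is immediate: $\lambda_\varepsilon\in\mathcal{S}$ supplies the existential witness called for in the very definition of $\mathscr{E}_{\varepsilon,\mathcal{S}}$.

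For $(\Rightarrow)$, I would fix $E\in\mathscr{E}_{\varepsilon,\mathcal{S}}$ together with a witness $\lambda\in\mathcal{S}$ and superstrong extender $F$ as in Notation~\ref{ConvenientNotation}. The constraint $F\restriction\varepsilon=E$ combined with $E$ having length $\varepsilon$ forces $\mathrm{len}(F)\geq\varepsilon$, and the superstrongness of $F$ gives $\lambda=j_F(\crit(F))=\mathrm{len}(F)$, so $\lambda\geq\varepsilon$. In the generic case $\lambda>\varepsilon$, I would bundle the fixed data into a single parameter $a:=(E,\varepsilon,\mathcal{S}\cap\varepsilon)$. A direct rank calculation (using $E\in V_\kappa$ of length $\varepsilon$ and $\mathcal{S}\cap\varepsilon\subseteq\varepsilon$) yields $\rank(a)<\varepsilon+\omega$. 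Since both $\lambda$ and $\lambda_\varepsilon$ are inaccessibles in $\mathcal{S}$ strictly above $\varepsilon$, they exceed $\varepsilon+\omega$, and the hypothesis of Lemma~\ref{indiscernibility} is met. Applying that lemma we obtain
$$V_\kappa\models\Phi(a,\lambda)\;\Longleftrightarrow\;V_\kappa\models\Phi(a,\lambda_\varepsilon),$$
which is the desired equivalence. The borderline case $\lambda=\varepsilon$ (which forces $F=E$ and $\varepsilon\in\mathcal{S}$) must be dispatched separately: one produces a fresh witness of length $\lambda_\varepsilon$ by iterating $j_E$ along the normal measure on $\varepsilon$ available inside $M_E$, thereby reducing to the main case.

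For the moreover clause, the equivalence just proved lets us restrict attention to superstrong witnesses $F$ with $\mathrm{len}(F)=\lambda_\varepsilon$; all such $F$'s lie in $V_{\lambda_\varepsilon+\omega}$, so the collection of witnesses is a set. Since $\prec$ well-orders $V$, we simply let $E^{*}$ be its $\prec$-least element. Property~\eqref{eq: dagger} then guarantees that $E^{*}$ is computed identically inside every inaccessible $V_\mu$ above $\lambda_\varepsilon$, which is the feature exploited by the extender embeddings $j_{E^{*}}$ to shift the iteration correctly in later sections.

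The main obstacle is the borderline case $\lambda=\varepsilon$: because $\rank(a)\geq\varepsilon$, Lemma~\ref{indiscernibility} cannot be invoked directly with $\varepsilon$ itself as an indiscernible. One must therefore first construct, by hand, a second witness whose $\lambda$ parameter lies strictly above $\varepsilon$, and only then apply the indiscernibility swap. A secondary concern is to confirm that the inner existential clause ``there exists a superstrong $F$ with $\ldots$'' is genuinely first-order expressible over $V_\kappa$ with the four stated parameters $E,\varepsilon,\mathcal{S}\cap\varepsilon,\lambda$, so that Lemma~\ref{indiscernibility} applies in the strong form it requires.
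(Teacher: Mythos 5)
Your proposal matches the paper's proof: the backward direction is definitional, the forward direction is exactly the indiscernibility swap of Lemma~\ref{indiscernibility} applied to the parameter $(E,\varepsilon,\mathcal{S}\cap\varepsilon)$, whose rank lies below $\lambda_\varepsilon$, and the moreover clause is obtained by taking the $\prec$-least witness. The only divergence is your ``borderline case'' $\lambda=\varepsilon$: the paper does not treat it, writing simply $\rank\{E,\varepsilon,\mathcal{S}\cap\varepsilon\}<\lambda_\varepsilon\leq\lambda$, i.e.\ reading $\Phi$ so that the witnessing target lies strictly above $\varepsilon$ (the superstrong $F$ properly extends $E$, so its target is at least the next member of $\mathcal{S}$ past $\varepsilon$); your proposed repair by iterating $j_E$ is therefore not needed, and as sketched it would not obviously recover the coherence clause $j_F(\mathcal{S})\cap(\varepsilon+1)=\mathcal{S}\cap(\varepsilon+1)$ nor land exactly on a member of $\mathcal{S}$.
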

\begin{proof}
    Suppose that $E$ is an extender in $\mathscr{E}_{\varepsilon, \mathcal{S}}$. By definition, there is a target $\lambda\in\mathcal{S}$ witnessing $V_\kappa\models \Phi(E,\varepsilon,\mathcal{S}\cap \varepsilon,\lambda)$. Since members of $\mathcal{S}$ are indiscernibles and $\rank\{E,\varepsilon,\mathcal{S}\cap \varepsilon\}<\lambda_\varepsilon\leq \lambda$ it follows that  $V_\kappa\models \Phi(E,\varepsilon,\mathcal{S}\cap \varepsilon,\lambda_\varepsilon).$ Let $E^*$ be the $\prec$-least extender witnessing $E\in \mathscr{E}_{\varepsilon,\mathcal{S}}$. 
\end{proof}
We will define our main iteration $\mathbb{P}_\kappa$ in a prelimminary generic extension by Cohen forcing $\mathrm{Add}(\omega,1)$. This will ensure that  $\mathrm{Add}(\omega,1)\ast \dot{\mathbb{P}}_\kappa$ admits a \emph{gap below $\omega_1$}. As a result, superstrong embeddings in the generic extension by $\mathrm{Add}(\omega,1)\ast \dot{\mathbb{P}}_\kappa$ will be liftings of superstrong extenders in $V$. This can be seen as a consequence of \emph{Hamkin's Gap Forcing} theorem \cite{HamGap}. 

\smallskip

Working in a generic extension by ${\mathrm{Add}(\omega,1)}$ (for simplicity, call it $V$) let $$\mathbb{P}_\kappa:=\varinjlim\langle \mathbb{P}_\varepsilon; \dot{\mathbb{Q}}_{\varepsilon}\mid \varepsilon< \kappa\rangle$$ be the Easton-supported iteration defined by induction as follows:
\begin{IH}\label{Def: induction hypothesis}
   For each inaccessible $\sigma<\varepsilon$ with $\sigma\in^i \mathcal{S}$ and a $V$-generic filter $G_\sigma\s \mathbb{P}_\sigma$ we suppose the following hold:

   \medskip

   \underline{\textbf{Trivial case:}} If $V[G_\sigma]\models``\sigma$ is not inaccessible'' then  $(\dot{\mathbb{Q}}_\sigma)_{G_\sigma}:=\{\one\}.$

   \medskip

      \underline{\textbf{Non-trivial case:}} If $V[G_\sigma]\models``\sigma$ is inaccessible'' then we distinguish among two cases seeking to define $(\dot{\mathbb{Q}}_\sigma)_{G_\sigma}.$ Namely,

\medskip
   {\textbf{Case $(\aleph)$:}}\label{IH} If $\mathcal{S}\cap \sigma$ is non-stationary (in $V$) then:

   \smallskip

   $\br$  If $\mathscr{E}_{\sigma,\mathcal{S}}=\emptyset$ then we set $(\dot{\mathbb{Q}}_\sigma)_{G_\sigma}:=\mathbb{K}_\sigma\ast \dot{\mathbb{C}}(\sigma^{+2+i}_V,\lambda_\sigma).$

   \smallskip

   $\br$ If $\mathscr{E}_{\sigma,\mathcal{S}}$ is non-empty then we demand:
\begin{enumerate}
    \item\label{C1HI} For each extender $F\in \mathscr{E}_{\sigma,\mathcal{S}}$, 
    $j_{F^*}(\mathbb{P}_{\crit(F)})_\sigma=\mathbb{P}_\sigma$.
\item\label{C2HI}  For each extender $F\in \mathscr{E}_{\sigma,\mathcal{S}}$, 
    $j_{F^*}(\mathbb{P}_{\crit(F)})/{G}_\sigma\;\;\text{projects onto ${\mathbb{K}}_\sigma$;}$
    \item\label{C3HI}  The $\sigma$th-stage of the iteration is defined as $$(\dot{\mathbb{Q}}_\sigma)_{G_\sigma}={\mathbb{K}}_\sigma\ast \dot{\mathbb{T}}^{\mathrm{NS}}_\sigma \ast \dot{\mathbb{C}}(\sigma^{+2+i}_V,\lambda_\sigma)$$ where
     $$\textstyle \dot{\mathbb{T}}^{\mathrm{NS}}_\sigma=\prod_{F\in \mathscr{E}_{\sigma,\mathcal{S}}}j_{F^*}(\mathbb{P}_{\crit(F)})/({G}_\sigma\ast \dot{K}_\sigma).\footnote{Here the product forcing stands for the full-support product.}$$
    \item\label{C4HI} $\text{$(\dot{\mathbb{Q}}_\sigma)_{G_\sigma}$ is $\sigma$-closed}$ and $\text{$(\dot{\mathbb{Q}}_\sigma)_{G_\sigma}/{K}_\sigma$ is $\sigma^{+2+i}_V$-closed in $V[G_\sigma\ast K_\sigma]$}.$
\end{enumerate}

\medskip

 {\textbf{Case $(\beth)$:}} If $\mathcal{S}\cap \sigma$ is stationary (in $V$) then: 

 \smallskip

  $\br$  If $\mathscr{E}_{\sigma,\mathcal{S}}=\emptyset$ then we set $(\dot{\mathbb{Q}}_\sigma)_{G_\sigma}:={\mathbb{C}}(\sigma^{+2+i}_V,\lambda_\sigma).$

   \smallskip

   $\br$ If $\mathscr{E}_{\sigma,\mathcal{S}}$ is non-empty then we demand:
   \begin{enumerate}
       \item For each $F\in \mathscr{E}_{\sigma,\mathcal{S}}$, 
    $j_{F^*}(\mathbb{P}_{\crit(F)})_\sigma=\mathbb{P}_\sigma.$
      \item  The $\sigma$th-stage of the iteration is defined as $$(\dot{\mathbb{Q}}_\sigma)_{G_\sigma}={\mathbb{T}}^{\mathrm{NS}^+}_\sigma \ast \dot{\mathbb{C}}(\sigma^{+2+i}_V,\lambda_\sigma)$$ where
     $$\textstyle \dot{\mathbb{T}}^{\mathrm{NS}^+}_\sigma=\prod_{F\in \mathscr{E}_{\sigma,\mathcal{S}}}j_{F^*}(\mathbb{P}_{\crit(F)})/{G}_\sigma.$$
    \item  $\text{$(\dot{\mathbb{Q}}_\sigma)_{G_\sigma}$ is $\sigma^{+2+i}_V$-closed}.$ \qed
   \end{enumerate}
   \end{IH}

Let $\mathbb{P}_\varepsilon$ be the direct limit of $\langle \mathbb{P}_\sigma; \dot{\mathbb{Q}}_{\sigma}\mid \sigma< \varepsilon\rangle$ and $G_\varepsilon\s \mathbb{P}_\varepsilon$ a $V$-generic filter. Let us define $(\dot{\mathbb{Q}}_\varepsilon)_{G_\varepsilon}$ complying with the induction hypothesis. As before, if $V[G_\varepsilon]\models``\varepsilon$ is non-inaccessible'' then we continue and declare $(\dot{\mathbb{Q}}_\varepsilon)_{G_\varepsilon}$ to be trivial.  Otherwise $V[G_\varepsilon]\models``\varepsilon$ is inaccessible'' and we distinguish among two cases according to whether $\mathcal{S}\cap \varepsilon$ is $(V$-) stationary or not. 

\medskip

\underline{\textbf{Case $(\aleph)$:}} Suppose that  $\mathcal{S}\cap \varepsilon$ is non-stationary in $V$.  

\medskip

$\br$ If $\mathscr{E}_{\varepsilon,\mathcal{S}}=\emptyset$  then complying with the \textbf{Induction Hypothesis} we let \begin{equation}\label{CaseAleph1}
\tag{$\aleph$1}
(\dot{\mathbb{Q}}_\varepsilon)_{G_\varepsilon}:={\mathbb{K}}_\varepsilon\ast \dot{\mathbb{C}}(\varepsilon^{+2+i}_V ,\lambda_\varepsilon)\;\text{ provided $\varepsilon\in^i\mathcal{S}$.}
\end{equation}
 $\br$ Otherwise,  $\mathscr{E}_{\varepsilon,\mathcal{S}}\neq \emptyset$ and we proceed as follows. For each extender $E$ in $\mathscr{E}_{\varepsilon,\mathcal{S}}$ let $E^*$ be the $\prec$-least completion of $E$ to an enhanced superstrong extender given by Lemma~\ref{LemmaAboutPhi}. Let $j_{E^*}\colon V\rightarrow M_{E^*}$ be the corresponding ultrapower embedding. Say $\delta$ is the critical point of $j_{E^*}$. This embedding witnesses $$\text{$j_{E^*}(\delta)=\lambda_\varepsilon$,  $V_{\lambda_\varepsilon}\s M_{E^*}$, $(M_{E^*})^\delta\s M_{E^*}$ and $j_{E^*}(\mathcal{S})\cap (\varepsilon+1)=\mathcal{S}\cap (\varepsilon+1)$.}$$ 
\begin{lemma}\label{lemma: C1IH}
   $j_{E^*}(\mathbb{P}_\delta)_\varepsilon=\mathbb{P}_\varepsilon$. Thus, \eqref{C1HI} of the \textbf{Induction Hypothesis}.
\end{lemma}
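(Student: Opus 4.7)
\medskip

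The plan is to prove, by induction on inaccessible $\sigma\leq \varepsilon$, that $j_{E^*}(\mathbb{P}_\delta)_\sigma=\mathbb{P}_\sigma$. The base case $\sigma=\delta$ is immediate because $j_{E^*}\restriction V_\delta=\mathrm{id}$, and the passage through inaccessible limit stages is automatic since both iterations use Easton support and compute these limits as direct limits of smaller initial segments. The substantive content lies in the successor step: granted agreement of the first $\sigma$ stages, I must check that the $\sigma$-th factor $\dot{\mathbb{Q}}_\sigma$, as defined by the recursion on the $V$-side, coincides with its analogue on the $M_{E^*}$-side.

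To do this one inspects the parameters entering the definition of $\dot{\mathbb{Q}}_\sigma$: (i) whether $V[G_\sigma]\models \sigma$ is inaccessible, (ii) whether $\mathcal{S}\cap\sigma$ is stationary, (iii) whether $\sigma\in\mathcal{S}$, (iv) the ordinal $\lambda_\sigma$, and (v) the set $\mathscr{E}_{\sigma,\mathcal{S}}$ together with the $\prec$-least superstrong completions of its members. Items (i)--(iii) are absolute between $V$ and $M_{E^*}$: $\mathbb{P}_\sigma\in V_{\lambda_\varepsilon}\s M_{E^*}$, so the two models see the same extension $V[G_\sigma]$; every subset of $\sigma$ lies in $V_{\lambda_\varepsilon}\s M_{E^*}$, so stationarity of $\mathcal{S}\cap\sigma$ transfers; and the hypothesis $j_{E^*}(\mathcal{S})\cap(\varepsilon+1)=\mathcal{S}\cap(\varepsilon+1)$ takes care of the $\mathcal{S}$-membership data. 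For (iv) one verifies similarly that $\lambda_\sigma$, computed as the least member of $\mathcal{S}$ (resp.\ $j_{E^*}(\mathcal{S})$) above $\sigma$, takes the same value on both sides; the only case requiring care is when no member of $\mathcal{S}$ lies in $(\sigma,\varepsilon]$, and there the agreement of the two intersections with $\varepsilon+1$, together with $\lambda_\varepsilon=j_{E^*}(\delta)\in j_{E^*}(\mathcal{S})$, handles the match.

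The hardest clause is (v). By Lemma~\ref{LemmaAboutPhi}, $E\in\mathscr{E}_{\sigma,\mathcal{S}}$ is equivalent to the single sentence $V_\kappa\models\Phi(E,\sigma,\mathcal{S}\cap\sigma,\lambda_\sigma)$, whose witnesses are enhanced superstrong extenders $F$ of length $\lambda_\sigma\leq \lambda_\varepsilon$; the pieces of data actually interrogated by $\Phi$ (the extender $F$, the relevant initial segment of $M_F$, and the value $j_F(\mathcal{S})\cap(\sigma+1)$) all lie inside $V_{\lambda_\varepsilon}\s M_{E^*}$, so the truth of $\Phi$ at $E$ is absolute to $M_{E^*}$, giving $\mathscr{E}_{\sigma,\mathcal{S}}^V=\mathscr{E}_{\sigma,j_{E^*}(\mathcal{S})}^{M_{E^*}}$. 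The selection of the $\prec$-least completion $F^*$ is then identical in both models, since the absoluteness equation $(\dagger)$ installed by the preliminary McAloon coding guarantees that $\prec$ restricted to $V_{\lambda_\varepsilon}$ is computed the same way in $V$ and in $M_{E^*}$. Combining (i)--(v) yields $\dot{\mathbb{Q}}_\sigma^V=\dot{\mathbb{Q}}_\sigma^{M_{E^*}}$ and closes the induction at $\sigma=\varepsilon$. The main anticipated obstacle is the edge case in (v) concerning witnesses $F$ whose target is exactly $\lambda_\varepsilon$: such an $F$ sits only marginally above $V_{\lambda_\varepsilon}$, and one has to verify that its enhanced superstrongness (in particular the closure property $(M_F)^{\crit(F)}\s M_F$) is computed identically in $V$ and in $M_{E^*}$, which is where the hypothesis $(M_{E^*})^{\delta}\s M_{E^*}$ will be doing the essential work.
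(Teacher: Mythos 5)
Your overall strategy is the paper's: induct on the stages $\sigma<\varepsilon$, observe that the case distinctions (triviality, stationarity of $\mathcal{S}\cap\sigma$, membership $\sigma\in^i\mathcal{S}$) are absolute because $V_{\lambda_\varepsilon}\s M_{E^*}$ and $j_{E^*}(\mathcal{S})\cap(\varepsilon+1)=\mathcal{S}\cap(\varepsilon+1)$, show that $\lambda_\sigma$ and $\mathscr{E}_{\sigma,\mathcal{S}}$ are computed identically on both sides (the latter via Lemma~\ref{LemmaAboutPhi}), and use the absoluteness $(\dagger)$ of $\prec$ to see that the $\prec$-least completions $F^*$ agree, so that the tail products $\mathbb{T}^{\mathrm{NS}}_\sigma$ coincide. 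All of that matches the paper's proof.

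The one step that does not work as you wrote it is your treatment of $\lambda_\sigma$ in the case $\mathcal{S}\cap(\sigma,\varepsilon]=\emptyset$. You propose to ``match'' the two values using the agreement of the intersections with $\varepsilon+1$ together with the assertion $\lambda_\varepsilon=j_{E^*}(\delta)\in j_{E^*}(\mathcal{S})$. That assertion is unjustified: by elementarity it is equivalent to $\delta\in\mathcal{S}$, which is nowhere assumed for $\delta=\crit(E^*)$. Worse, the hypothesis $j_{E^*}(\mathcal{S})\cap(\varepsilon+1)=\mathcal{S}\cap(\varepsilon+1)$ says nothing about $j_{E^*}(\mathcal{S})\cap(\varepsilon,\lambda_\varepsilon)$, so even granting $\lambda_\varepsilon\in j_{E^*}(\mathcal{S})$ you cannot conclude that the $M_{E^*}$-computed next point of $j_{E^*}(\mathcal{S})$ past $\sigma$ is $\lambda_\varepsilon$. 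The correct move, and the one the paper makes, is to show this case is vacuous: if $\lambda_\sigma>\varepsilon$, then the stage-$\sigma$ factor $\dot{\mathbb{C}}(\sigma^{+2+i}_V,\lambda_\sigma)$ would make $\lambda_\sigma$ the first inaccessible past $\sigma$, destroying the inaccessibility of $\varepsilon$ in $V[G_\varepsilon]$ and contradicting the standing assumption that stage $\varepsilon$ is non-trivial. Hence $\lambda_\sigma\leq\varepsilon$, and then $\lambda_\sigma=(\lambda_\sigma)^{M_{E^*}}$ follows directly from $j_{E^*}(\mathcal{S})\cap(\varepsilon+1)=\mathcal{S}\cap(\varepsilon+1)$. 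This also dissolves your ``main anticipated obstacle'' about witnesses $F$ with target exactly $\lambda_\varepsilon$: since $\lambda_\sigma\leq\varepsilon<\lambda_\varepsilon$ for every stage $\sigma<\varepsilon$, all witnesses interrogated by $\Phi$ lie well inside $V_{\lambda_\varepsilon}$, and the stage-$\varepsilon$ factor itself is not part of the claim (the lemma asserts agreement of the iterations strictly below $\varepsilon$; the relation at stage $\varepsilon$ is only a projection, handled separately in Lemma~\ref{lemma: C2IH}).
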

\begin{proof}
    We argue by induction on $\sigma<\varepsilon$. Suppose that  $\mathbb{P}_\sigma=j_{E^*}(\mathbb{P}_\delta)_\sigma$ and respectively denote by $\mathbb{Q}_\sigma$ and $\mathbb{R}_\sigma$ the $\sigma$th-stages of these iterations.  If $V[G_\varepsilon\restriction\sigma]\models ``\sigma$ is non-inaccessible'' then we are done as $\mathbb{Q}_\sigma=\{\one\}=\mathbb{R}_\sigma.$

    Let us assume that $V[G_\sigma]\models ``\sigma$ is inaccessible''. Suppose that $\mathcal{S}\cap \sigma$ is non-stationary (the case where $\mathcal{S}\cap \sigma$ is stationary is treated analogously). Then, $j_{E^*}(S)\cap \sigma$ is also non-stationary in $M_{E^*}$ because $V_{\lambda_\varepsilon}\s M_{E^*}$ and $$j_{E^*}(\mathcal{S})\cap \sigma=\mathcal{S}\cap \sigma.$$ Thus, both iterations define  $\mathbb{Q}_\sigma$ and $\mathbb{R}_\sigma$ according to \textbf{Case $(\aleph)$}.  
    \begin{claim}
        $\lambda_\sigma=(\lambda_\sigma)^{M_{E^*}}$ and $\lambda_\sigma<\lambda_\varepsilon$
    \end{claim}
    \begin{proof}[Proof of claim]
          Suppose that $\lambda_\sigma>\varepsilon$. Since $V[G_\varepsilon\restriction\sigma]\models ``\sigma$ is inaccessible'' it follows that $\lambda_\sigma$ is the first inaccessible past $\sigma$ in $V[G_\varepsilon\restriction(\sigma+1)]$. However, we were assuming that $V[G_\varepsilon]\models``\varepsilon$ is inaccessible'', which yields a contradiction. 
           
         Therefore, $\lambda_\sigma\leq \varepsilon<\lambda_\varepsilon$. Since $j_{E^*}(\mathcal{S})\cap (\varepsilon+1)=\mathcal{S}\cap (\varepsilon+1)$ we have: on the one hand, $\lambda_\sigma\in j_{E^*}(\mathcal{S})\cap (\varepsilon+1)$, hence $(\lambda_\sigma)^{M_{E^*}}\leq \lambda_\sigma$; on the other hand, $(\lambda_\sigma)^{M_{E^*}}\in \mathcal{S}\cap (\varepsilon+1)$, hence $\lambda_\sigma\leq (\lambda_\sigma)^{M_{E^*}}$
    \end{proof}
    \begin{claim}
        $\mathscr{E}_{\sigma,\mathcal{S}}=(\mathscr{E}_{\sigma,j_{E^*}(\mathcal{S})})^{M_{E^*}}$.
    \end{claim}
    \begin{proof}[Proof of claim]
        Let $F$ be an extender with length and strength $\sigma$. Then,
$$F\in \mathscr{E}_{\sigma,\mathcal{S}}\,\Leftrightarrow\, V_\kappa\models \Phi(\sigma,\mathcal{S}\cap \sigma, F, \lambda_\sigma)\,\Leftrightarrow\, V_{\lambda_\varepsilon}\models \Phi(\sigma,\mathcal{S}\cap \sigma, F, \lambda_\sigma).$$
Since $V_{\lambda_\varepsilon}\s M$, $j_{E^*}(\mathcal{S})\cap (\sigma+1)=\mathcal{S}\cap (\sigma+1)$ and $\lambda_\sigma=(\lambda_\sigma)^{M_{E^*}}$ the above is equivalent to saying $F\in (\mathscr{E}_{\sigma,j_{E^*}(\mathcal{S})})^{M_{E^*}}$, which concludes the proof.
    \end{proof}
Suppose first $\mathscr{E}_{\sigma,\mathcal{S}}=\emptyset$. In that case the above claims yield
$$\mathbb{Q}_\sigma=\mathbb{K}_\sigma\ast \dot{\mathbb{C}}(\sigma^{+2+i}_V,\lambda_\sigma)=\mathbb{R}_\sigma.$$
Suppose that  $\mathscr{E}_{\sigma,\mathcal{S}}$ is non-empty. We have to check that $\mathbb{T}^{\mathrm{NS}}_\sigma=(\mathbb{T}^{\mathrm{NS}}_\sigma)^{M_{E^*}}$ which in turn is equivalent to checking that $F^*$ (the $\prec$-least extender witnessing $F\in\mathscr{E}_{\sigma, \mathcal{S}}$) is the same as $(F^*)^{M_{E^*}}$ (the $\prec$-$M_{E^*}$-least extender witnessing $F\in(\mathscr{E}_{\sigma, j_{E^*}(\mathcal{S})})^{M_{E^*}}$). This follows from the fact that $F^*,(F^*)^{M_{E^*}}\in V_{\lambda_\varepsilon}\s M_{E^*}$ and the absoluteness of the well-ordering $\prec$ (see \eqref{eq: dagger} in page \pageref{eq: dagger}).
\end{proof}

\begin{lemma}\label{lemma: C2IH}
   $j_{E^*}(\mathbb{P}_\delta)/G_\varepsilon$ projects onto $\mathbb{K}_\varepsilon$. Thus \eqref{C2HI} of the \textbf{Induction Hypothesis}.
\end{lemma}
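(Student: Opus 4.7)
My plan is to identify the $\varepsilon$-th stage of $j_{E^*}(\mathbb{P}_\delta)$ as computed inside $M_{E^*}$, and to observe that in the present Case~$(\aleph)$ situation it must start with a copy of $\mathbb{K}_\varepsilon$; the required projection will then be the canonical first-coordinate map $(k,\vec x)\mapsto k$.

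First I would invoke the just-established Lemma~\ref{lemma: C1IH} to write $j_{E^*}(\mathbb{P}_\delta)/G_\varepsilon = \mathbb{Q}_\varepsilon^{M_{E^*}[G_\varepsilon]}\ast (\text{tail})$, where $\mathbb{Q}_\varepsilon^{M_{E^*}[G_\varepsilon]}$ is the stage forcing prescribed by the \textbf{Induction Hypothesis} applied inside $M_{E^*}$ (with $j_{E^*}(\mathcal{S})$ in place of $\mathcal{S}$). The first key verification is that $M_{E^*}$ is in Case~$(\aleph)$ at stage $\varepsilon$: using $V_{\lambda_\varepsilon}\subseteq M_{E^*}$ and $j_{E^*}(\mathcal{S})\cap (\varepsilon+1) = \mathcal{S}\cap (\varepsilon+1)$, any club $C\subseteq\varepsilon$ witnessing the non-stationarity of $\mathcal{S}\cap\varepsilon$ in $V$ lies in $V_{\varepsilon+1}\subseteq M_{E^*}$, remains a club there (clubness being absolute), and is disjoint from $j_{E^*}(\mathcal{S})\cap\varepsilon = \mathcal{S}\cap\varepsilon$; so $M_{E^*}$ also sees $j_{E^*}(\mathcal{S})\cap\varepsilon$ as non-stationary.

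Inspecting both subcases of Case~$(\aleph)$ in the induction hypothesis, one sees that $\mathbb{Q}_\varepsilon^{M_{E^*}[G_\varepsilon]}$ always begins with the Kurepa-type forcing $\mathbb{K}_\varepsilon^{M_{E^*}[G_\varepsilon]}$, regardless of whether $(\mathscr{E}_{\varepsilon, j_{E^*}(\mathcal{S})})^{M_{E^*}}$ is empty or not. It then remains to identify $\mathbb{K}_\varepsilon^{M_{E^*}[G_\varepsilon]}$ with $\mathbb{K}_\varepsilon^{V[G_\varepsilon]}$. For this I would combine three facts: (a) $V_{\lambda_\varepsilon}^V = V_{\lambda_\varepsilon}^{M_{E^*}}$, (b) $\mathbb{P}_\varepsilon$ has Easton support (hence size ${\leq}\varepsilon$), and (c) $\mathbb{P}_\varepsilon$ is $\varepsilon^+$-cc; these together imply that $V[G_\varepsilon]$ and $M_{E^*}[G_\varepsilon]$ agree on all the parameters entering the definition of $\mathbb{K}_\varepsilon$, namely the subsets of $\varepsilon$ of size ${<}\varepsilon$ and the cardinal $\varepsilon^+$.

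The main technical hurdle, as I foresee it, is precisely this last absoluteness check: one must carefully confirm that $\varepsilon^+$ is preserved by $\mathbb{P}_\varepsilon$ and that $\mathcal{P}_{\varepsilon}(\varepsilon)^{V[G_\varepsilon]} = \mathcal{P}_{\varepsilon}(\varepsilon)^{M_{E^*}[G_\varepsilon]}$, so that the two models genuinely compute the same Kurepa forcing. Once this is granted, the canonical first-coordinate projection furnishes the sought surjection $j_{E^*}(\mathbb{P}_\delta)/G_\varepsilon \twoheadrightarrow \mathbb{K}_\varepsilon$, completing the verification of Clause~\eqref{C2HI} of the \textbf{Induction Hypothesis}.
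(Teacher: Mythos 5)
Your proposal is correct and follows essentially the same route as the paper's proof: transfer the \textbf{Induction Hypothesis} to $M_{E^*}$ by elementarity (up to $j_{E^*}(\delta)=\lambda_\varepsilon>\varepsilon$), use $V_{\lambda_\varepsilon}\s M_{E^*}$ together with $j_{E^*}(\mathcal{S})\cap\varepsilon=\mathcal{S}\cap\varepsilon$ to see that $M_{E^*}$ falls into \textbf{Case~$(\aleph)$} at stage $\varepsilon$, and conclude that the $\varepsilon$th stage of $j_{E^*}(\mathbb{P}_\delta)$ begins with $\mathbb{K}_\varepsilon$. The only difference is that you spell out the absoluteness of $\mathbb{K}_\varepsilon$ between $V[G_\varepsilon]$ and $M_{E^*}[G_\varepsilon]$, which the paper leaves implicit.
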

\begin{proof}
    Note that our \textbf{Induction Hypothesis} holds up to the critical point of $E^*$ because $\delta<\varepsilon$. By elementarity of $j_{E^*}$ the same induction hypothesis holds in $M_{E^*}$ at least up to $j_{E^*}(\delta)=\lambda_\varepsilon$, which is bigger than $\varepsilon$.

    Working in $M_{E^*}$ we have that $j_{E^*}(\mathcal{S})\cap \varepsilon$ is non-stationary. This is because $V_{\lambda_\varepsilon}\s M_{E^*}$,  $j_{E^*}(\mathcal{S})\cap \varepsilon=\mathcal{S}\cap \varepsilon$ and the latter is  non-stationary in $V$. Therefore we enter the casusitic given by \textbf{Case~$(\aleph)$}. Invoking Clause~\eqref{C3HI} we get, denoting $(\dot{\mathbb{R}}_\varepsilon)_{G_\varepsilon}$ the $\varepsilon$th-stage of the iteration $j_{E^*}(\mathbb{P}_\delta)$, $$M_{E^*}[G_\varepsilon]\models (\dot{\mathbb{R}}_\varepsilon)_{G_\varepsilon}=\mathbb{K}_\varepsilon\ast \dot{\mathbb{T}}^{\mathrm{NS}}_\varepsilon\ast \dot{\mathbb{C}}(\varepsilon^{+2+i}_V,\lambda_\varepsilon).\footnote{Here both $\dot{\mathbb{T}}^{\mathrm{NS}}_\varepsilon$ and $\lambda_\varepsilon$ are computed inside $M_{E^*}[G_\varepsilon]$. In particular, $(\lambda_\varepsilon)^{M_{E^*}[G_\varepsilon]}<\lambda_\varepsilon$.}$$
    Thus, $j_{E^*}(\mathbb{P}_\delta)/G_\varepsilon$ projects to $\mathbb{K}_\varepsilon$.
\end{proof}
Let us now define the $\varepsilon$th-stage of the main iteration $\mathbb{P}_\kappa$.
\begin{definition}[Tail forcing in \textbf{Case}~$(\aleph)$]
   Let $\dot{\mathbb{T}}^{\mathrm{NS}}_{\varepsilon}$ be a $\mathbb{K}_\varepsilon$-name  for the full-support product of the tail forcings $\prod_{E\in \mathscr{E}_{\varepsilon,\mathcal{S}}}j_{E^*}(\mathbb{P}_{\crit(E^*)})/({G}_\varepsilon\ast \dot{K}_\varepsilon).$ 
\end{definition}
Working in $V[G_\varepsilon]$ define: 
\begin{equation*}\label{CaseAleph2}
\tag{$\aleph$2}
(\dot{\mathbb{Q}}_\varepsilon)_{G_\varepsilon}:=\mathbb{K}_\varepsilon\ast \dot{\mathbb{T}}^{\mathrm{NS}}_\varepsilon\ast \dot{\mathbb{C}}(\varepsilon^{+2+i}_V ,\lambda_\varepsilon) \;\text{ provided $\varepsilon\in^i\mathcal{S}$.}
\end{equation*}
Let us check that this definition of $(\dot{\mathbb{Q}}_\varepsilon)_{G_\varepsilon}$ complies with the remaining clauses of the \textbf{Induction Hypothesis}. Clearly, by the way we posed $(\dot{\mathbb{Q}}_\varepsilon)_{G_\varepsilon}$ Clause~\eqref{C3HI} holds. Let us dispose with Clause~\eqref{C4HI}.
\begin{lemma}\label{lemma: C4IH}
    $(\dot{\mathbb{Q}}_\varepsilon)_{G_\varepsilon}$ is $\varepsilon$-closed and $(\dot{\mathbb{Q}}_\varepsilon)_{G_\varepsilon}/K_\varepsilon$ is $\varepsilon^{+2+i}_V$-closed in the model $V[G_\varepsilon\ast K_\varepsilon].$ Thus Clause~\eqref{C4HI} of the \textbf{Induction Hypothesis} holds.
\end{lemma}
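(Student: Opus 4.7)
The plan is to reduce both closure assertions to closure statements about each of the three factors of $(\dot{\mathbb{Q}}_\varepsilon)_{G_\varepsilon} = \mathbb{K}_\varepsilon \ast \dot{\mathbb{T}}^{\mathrm{NS}}_\varepsilon \ast \dot{\mathbb{C}}(\varepsilon^{+2+i}_V, \lambda_\varepsilon)$. Since $\mathbb{K}_\varepsilon$ is $\varepsilon$-closed by Proposition~\ref{PropertiesKonigposet}(1) and the coding poset $\dot{\mathbb{C}}(\varepsilon^{+2+i}_V, \lambda_\varepsilon)$ is $\varepsilon^{+2+i}_V$-directed-closed (as recorded at the end of Section~\ref{SectionCodingPoset}), both assertions follow once I show that $\dot{\mathbb{T}}^{\mathrm{NS}}_\varepsilon$ is $\varepsilon^{+2+i}_V$-closed in $V[G_\varepsilon \ast K_\varepsilon]$: this yields the $\varepsilon^{+2+i}_V$-closure of the post-$K_\varepsilon$ quotient, and \emph{a fortiori} the $\varepsilon$-closure of the full three-step iteration.

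Being a full-support product, the $\varepsilon^{+2+i}_V$-closure of $\dot{\mathbb{T}}^{\mathrm{NS}}_\varepsilon$ reduces to the $\varepsilon^{+2+i}_V$-closure of each factor $j_{E^*}(\mathbb{P}_{\crit(E^*)})/(G_\varepsilon \ast K_\varepsilon)$. Fix such a factor and write $\delta = \crit(E^*) < \varepsilon$. Since the Induction Hypothesis is in force below $\varepsilon$ in $V$, elementarity of $j_{E^*}\colon V \to M_{E^*}$ lifts it to all inaccessible stages $\sigma < j_{E^*}(\delta) = \lambda_\varepsilon$ in $M_{E^*}$, and hence in particular at $\sigma = \varepsilon$. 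Clause~\eqref{C4HI} applied at this $M_{E^*}$-stage then tells us that $\dot{\mathbb{T}}^{\mathrm{NS}, M_{E^*}}_\varepsilon \ast \dot{\mathbb{C}}(\varepsilon^{+2+i}_V, \lambda_\varepsilon)$ is $\varepsilon^{+2+i}_V$-closed in $M_{E^*}[G_\varepsilon \ast K_\varepsilon]$. Moreover, since the coding at stage $\varepsilon$ makes $\lambda_\varepsilon$ the first inaccessible above $\varepsilon^{+2+i}_V$, all stages of $j_{E^*}(\mathbb{P}_{\crit(E^*)})$ with index in $(\varepsilon, \lambda_\varepsilon)$ are trivial, and so the factor $j_{E^*}(\mathbb{P}_{\crit(E^*)})/(G_\varepsilon \ast K_\varepsilon)$ is precisely this same two-step iteration.

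The remaining task --- which I expect to be the main obstacle --- is to transfer the closure from $M_{E^*}[G_\varepsilon \ast K_\varepsilon]$ up to $V[G_\varepsilon \ast K_\varepsilon]$, since a descending sequence of length ${<}\varepsilon^{+2+i}_V$ in $V[G_\varepsilon \ast K_\varepsilon]$ need not belong to $M_{E^*}[G_\varepsilon \ast K_\varepsilon]$. My approach is coordinate-wise: the product-and-iteration structure lets one construct lower bounds factor by factor, and each factor's closure is an intrinsic property of a poset living inside $V_{\lambda_\varepsilon} = V_{\lambda_\varepsilon}^{M_{E^*}}$. The $\varepsilon^{+2+i}_V$-directed-closure of $\dot{\mathbb{C}}$ is absolute, while the closure of the nested $M_{E^*}$-product $\dot{\mathbb{T}}^{\mathrm{NS}, M_{E^*}}_\varepsilon$ unwinds through the same analysis applied to each of its own factors $j_{F^*}(\mathbb{P}_{\crit(F^*)})/(G_\varepsilon \ast K_\varepsilon)$, where absoluteness of the well-ordering $\prec$ (see~\eqref{eq: dagger}) guarantees that the extenders $F^*$ selected in $V$ and in $M_{E^*}$ coincide. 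Reading this back through the two reductions of the first paragraph delivers Clause~\eqref{C4HI} at stage $\varepsilon$ in $V$.
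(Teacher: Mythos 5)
Your reduction to the closure of $\dot{\mathbb{T}}^{\mathrm{NS}}_\varepsilon$ over $V[G_\varepsilon\ast K_\varepsilon]$ and your appeal to elementarity to import the Induction Hypothesis into $M_{E^*}$ both match the paper, but there are two problems. First, a factual error: the stages of $j_{E^*}(\mathbb{P}_{\crit(E^*)})$ with index in $(\varepsilon,\lambda_\varepsilon)$ are \emph{not} all trivial. The coding poset appearing at the $\varepsilon$th stage of $M_{E^*}$'s iteration is $\dot{\mathbb{C}}(\varepsilon^{+2+i}_V,(\lambda_\varepsilon)^{M_{E^*}[G_\varepsilon]})$, and $(\lambda_\varepsilon)^{M_{E^*}[G_\varepsilon]}<\lambda_\varepsilon=j_{E^*}(\crit(E^*))$ (the paper flags exactly this in a footnote), so the iteration resumes nontrivially at inaccessibles in $[(\lambda_\varepsilon)^{M_{E^*}[G_\varepsilon]},\lambda_\varepsilon)$. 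This is repairable --- apply Clause~\eqref{C4HI} at \emph{every} stage $\sigma\in[\varepsilon,\lambda_\varepsilon)$ inside $M_{E^*}$ to get that the entire tail $j_{E^*}(\mathbb{P}_{\crit(E^*)})/(G_\varepsilon\ast K_\varepsilon)$ is $\varepsilon^{+2+i}_V$-closed in $M_{E^*}[G_\varepsilon\ast K_\varepsilon]$ --- but your two-step description of the factor is what your subsequent transfer argument leans on, so the error propagates.

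Second, and more seriously, the step you yourself identify as the main obstacle --- transferring closure from $M_{E^*}[G_\varepsilon\ast K_\varepsilon]$ to $V[G_\varepsilon\ast K_\varepsilon]$ --- is not actually carried out. Closure is not an ``intrinsic property'' of a poset: it quantifies over descending sequences in the ambient model, and $V[G_\varepsilon\ast K_\varepsilon]$ may a priori contain sequences that $M_{E^*}[G_\varepsilon\ast K_\varepsilon]$ does not see. Your proposed ``unwinding through the same analysis applied to each of its own factors'' is circular: those factors are again tail iterations of the same kind, now computed one model deeper, and there is no base case. The missing idea is the paper's: the poset $j_{E^*}(\mathbb{P}_{\crit(E^*)})/(G_\varepsilon\ast K_\varepsilon)$ is a subset of $V[G_\varepsilon\ast K_\varepsilon]_{\lambda_\varepsilon}=V_{\lambda_\varepsilon}[G_\varepsilon\ast K_\varepsilon]\s M_{E^*}[G_\varepsilon\ast K_\varepsilon]$ (using $V_{\lambda_\varepsilon}\s M_{E^*}$ and the smallness of $\mathbb{P}_\varepsilon\ast\dot{\mathbb{K}}_\varepsilon$ relative to $\lambda_\varepsilon$), and $V[G_\varepsilon\ast K_\varepsilon]_{\lambda_\varepsilon}$ is closed under ${<}\lambda_\varepsilon$-sequences in $V[G_\varepsilon\ast K_\varepsilon]$ by inaccessibility of $\lambda_\varepsilon$ there. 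Hence any descending sequence of length ${<}\varepsilon^{+2+i}_V$ living in $V[G_\varepsilon\ast K_\varepsilon]$ already belongs to $M_{E^*}[G_\varepsilon\ast K_\varepsilon]$, where the closure established via the Induction Hypothesis furnishes a lower bound. Once you have this, the coordinate-by-coordinate assembly over the full-support product is fine.
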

\begin{proof}
Fix $E\in \mathscr{E}_{\varepsilon,\mathcal{S}}$. Since $V_{\lambda_\varepsilon}\s M_{E^*}$, $j_{E^*}(\mathcal{S})\cap \varepsilon$ is non-stationary in $M_{E^*}$ as well.  As a result, we can invoke Clause~\eqref{C4HI} of our \textbf{Induction Hypothesis} inside $M_{E^*}$ for the $\varepsilon$th-stage of the iteration $j_{E^*}(\mathbb{P}_\delta)$. Thus we 
infer that $$M_{E^*}[G_\varepsilon]\models \text{$``(\dot{\mathbb{R}}_\varepsilon)_{G_\varepsilon}$ is $\varepsilon$-closed and projects to $\mathbb{K}_\varepsilon$''}$$
 and since $\varepsilon\in^i\mathcal{S}$, $$M_{E^*}[G_\varepsilon]\models \text{$``(\dot{\mathbb{R}}_\varepsilon)_{G_\varepsilon}/\dot{K}_\varepsilon$ is $\varepsilon^{+2+i}_V$-closed''}.$$ 
 In fact, applying the \textbf{Induction Hypothesis} at all the stages $\sigma\in [\varepsilon,\lambda_\varepsilon)$
 \begin{equation*}\label{eqclosure}
 \tag{$\star$} M_{E^*}[G_\varepsilon\ast K_\varepsilon]\models \text{$``j_{E^*}(\mathbb{P}_\delta)/({G}_\varepsilon\ast {K}_\varepsilon)$ is $\varepsilon^{+2+i}_V$-closed''}.\end{equation*}
We cannot outright infer that $(\star)$ is absolute  between $M_{E^*}$ and $V$ -- after all, $M_{E^*}$ is closed only under $\delta$-sequences in $V$. 
\begin{claim} $V[G_\varepsilon\ast K_\varepsilon]\models \text{$``j_{E^*}(\mathbb{P}_\delta)/({G}_\varepsilon\ast {K}_\varepsilon)$ is $\varepsilon^{+2+i}_V$-closed''}.$
\end{claim}
\begin{proof}[Proof of claim]
    
Let  $\langle p_\alpha\mid \alpha<\beta\rangle\in V[G_\varepsilon\ast K_\varepsilon]$ be a decreasing sequence of conditions in the poset with $\beta<\varepsilon^{+2+i}_V$. 
Let us note a few things:
\begin{enumerate}
    \item[($\alpha$)] $V[G_\varepsilon\ast K_\varepsilon]_{\lambda_\varepsilon}=V_{\lambda_\varepsilon}[G_\varepsilon\ast K_\varepsilon]\s M_{E^*}[G_\varepsilon\ast K_\varepsilon]$;
    \item[($\beta$)] $j_{E^*}(\mathbb{P}_\delta)/(G_\varepsilon\ast K_\varepsilon)\s V[G_\varepsilon\ast K_\varepsilon]_{\lambda_\varepsilon}$;
    \item[($\gamma$)] $V[G_\varepsilon\ast K_\varepsilon]_{\lambda_\varepsilon}$ is closed under $\lambda_\varepsilon$-sequences in $V[G_\varepsilon\ast K_\varepsilon]$.
\end{enumerate}
$(\alpha)$ is true in that $\mathbb{P}_\varepsilon\ast \mathbb{K}_\varepsilon$ is small compared to $\lambda_\varepsilon$ and $V_{\lambda_\varepsilon}\s M_{E^*}$; $(\beta)$ is evident; 
$(\gamma)$ is an immediate consequence of the inaccessibility of $\lambda_\varepsilon$ in $V[G_\varepsilon\ast K_\varepsilon]$.
Combining these items we have that $\langle p_\alpha\mid \alpha<\beta\rangle\in M_{E^*}[G_\varepsilon\ast K_\varepsilon]$ so by equation \eqref{eqclosure} above there is $p_\beta\in M_{E^*}[H]$ a lower bound for it. 
\end{proof}
Since the extender $E\in\mathscr{E}_{\varepsilon,\mathcal{S}}$ was arbitrary we conclude that 
$$\textstyle \mathbb{T}^{\mathrm{NS}}_\varepsilon:=\prod_{E\in\mathscr{E}_{\varepsilon,\mathcal{S}}}j_{E^*}(\mathbb{P}_\delta)/(G_\varepsilon\ast K_\varepsilon)$$
is $\varepsilon^{+2+i}_V$-closed in $V[G_\varepsilon\ast K_\varepsilon]$. This combined with the definition of $(\dot{\mathbb{Q}}_\varepsilon)_{G_\varepsilon}$ given in equation \eqref{CaseAleph2} above implies that  
 $(\dot{\mathbb{Q}}_\varepsilon)_{G_\varepsilon}$ is $\varepsilon$-closed and $(\dot{\mathbb{Q}}_\varepsilon)_{G_\varepsilon}/K_\varepsilon$ is $\varepsilon^{+2+i}_V$-closed. Thus we are done with the lemma.
\end{proof}
The above discussion completes the inductive definition of the iteration $\mathbb{P}_\kappa$ should we fell in \textbf{Case $(\aleph)$}. Let us next discuss the other possible case.

\medskip

\underline{\textbf{Case $(\beth)$:}} Suppose that  $\mathcal{S}\cap \varepsilon$ is stationary in $V$. 

\smallskip

$\br$ If $\mathscr{E}_{\varepsilon,\mathcal{S}}=\emptyset$ then  complying with our \textbf{Induction Hypothesis} we define \begin{equation}\label{CaseBeth1}
\tag{$\beth$1}
(\dot{\mathbb{Q}}_\varepsilon)_{G_\varepsilon}:={\mathbb{C}}(\varepsilon^{+2+i}_V ,\lambda_\varepsilon)\;\text{ provided $\varepsilon\in^i\mathcal{S}$.}
\end{equation}
$\br$ Otherwise, $\mathscr{E}_{\varepsilon,\mathcal{S}}\neq \emptyset$ and we proceed as follows.  As before, for each $E\in\mathscr{E}_{\varepsilon,\mathcal{S}}$ we let $E^*$ be the $\prec$-least  extension of $E$ provided by Lemma~\ref{LemmaAboutPhi}. Since $\mathbb{P}_\varepsilon$ is definable using $\mathcal{S}\cap \varepsilon$ and the well-ordering $\prec$, $j_{E^*}$ shifts $\mathcal{S}\cap \varepsilon$ correctly and $\prec$ is sufficiently absolute we get $j_{E^*}(\mathbb{P}_{\crit(E^*)})_\varepsilon=\mathbb{P}_\varepsilon$. 
\begin{definition}[Tail forcing in \textbf{Case}~$(\beth)$]
    Denote by ${\mathbb{T}}^{\mathrm{NS}^+}_{\varepsilon}$  the full-support product of the tail forcings   $\textstyle \prod_{E\in\mathscr{E}_{\varepsilon,\mathcal{S}}}j_{E^*}(\mathbb{P}_{\crit(E^*)})/{G}_\varepsilon.$
\end{definition}
Complying with our \textbf{Induction Hypothesis} we define 
\begin{equation*}\label{CaseBeth2}
\tag{$\beth$2}
(\dot{\mathbb{Q}}_\varepsilon)_{G_\varepsilon}:= {\mathbb{T}}^{\mathrm{NS}^+}_\varepsilon\ast \dot{\mathbb{C}}(\varepsilon^{+2+i}_V ,\lambda_\varepsilon) \;\text{ provided $\varepsilon\in^i\mathcal{S}$.}
\end{equation*}
Arguing exactly as in the previous lemmas one checks that this definition complies with the requirements of the \textbf{Induction Hypothesis}. 

\smallskip

For the reader's benefit we summarize the definition of $(\dot{\mathbb{Q}}_\varepsilon)_{G_\varepsilon}$:
\begin{definition}[Main iteration]\label{DefinitionMainIteration} Let $i\in\{0,1\}$ be such that $\varepsilon\in^i\mathcal{S}$.\hfill

\smallskip

{\textbf{Case $(\aleph)$:}} \label{Casealeph}Suppose that  $\mathcal{S}\cap \varepsilon$ is non-stationary in $V$.  Then, 
$$({\mathbb{Q}}_\varepsilon)_{G_\varepsilon}:=
\begin{cases}
{\mathbb{K}}_\varepsilon\ast \dot{\mathbb{C}}(\varepsilon^{+2+i}_V ,\lambda_\varepsilon), & \text{if $\mathscr{E}_{\varepsilon,\mathcal{S}}=\emptyset$;}\\
{\mathbb{K}}_\varepsilon\ast \dot{\mathbb{T}}^{\mathrm{NS}}_\varepsilon\ast\dot{\mathbb{C}}(\varepsilon^{+2+i}_V ,\lambda_\varepsilon), & \text{if $\mathscr{E}_{\varepsilon,\mathcal{S}}\neq \emptyset$;}
\end{cases}
$$

\smallskip

{\textbf{Case $(\beth)$:}}\label{CaseBeth} Suppose  that $\mathcal{S}\cap \varepsilon$ is stationary in $V$.  Then, 
$$({\mathbb{Q}}_\varepsilon)_{G_\varepsilon}:=
\begin{cases}
 \mathbb{C}(\varepsilon^{+2+i}_V ,\lambda_\varepsilon), & \text{if $\mathscr{E}_{\varepsilon,\mathcal{S}}=\emptyset$;}\\
 {\mathbb{T}}^{\mathrm{NS}^+}_\varepsilon\ast\dot{\mathbb{C}}(\varepsilon^{+2+i}_V ,\lambda_\varepsilon), & \text{if $\mathscr{E}_{\varepsilon,\mathcal{S}}\neq \emptyset$;}
\end{cases}
$$
\end{definition}
This completes the inductive construction and yields $\mathbb{P}_\kappa$.

\smallskip

A few explanations about the design of the iteration $\mathbb{P}_\kappa$ are in order:
\begin{remark}
  \textbf{Case~($\aleph$)} has been posed so as to ensure that $\mathbb{Q}_\varepsilon$ kills the measurability of $\varepsilon$ whenever $\mathcal{S}\cap \varepsilon$ is non-stationary. Therefore if  $\delta$ happens to be measurable in  a generic extension by $\mathrm{Add}(\omega,1)\ast {\mathbb{P}_\kappa}$ then it must be that $\mathcal{S}\cap \delta$ is stationary in $V$.  Also, note that if $\mathcal{S}\cap\varepsilon$ is stationary in $V$ then $\mathbb{P}_\varepsilon$ will be $\varepsilon$-cc (see e.g. \cite[Proposition~{7.13}]{CumHandBook}) and as a result $\mathbb{P}_\kappa$ itself will preserve the stationarity of $\mathcal{S}\cap \varepsilon$ (because $\mathbb{P}_\kappa/G_\varepsilon$ is $\varepsilon$-closed).
\end{remark}
\begin{remark}
    The definition of the posets $\mathbb{T}^{\mathrm{NS}}_\varepsilon$ and $\mathbb{T}^{\mathrm{NS^+}}_\varepsilon$  have been  posed to anticipate the generics for the tail forcings $j_{E^*}(\mathbb{P}_{\crit(E^*)})/G_\varepsilon$. Here $j_{E^*}$ is an extender embedding that we would eventually like to lift. This is akin to the usual bookkeeping procedure involved in classical proofs; such as the consistency of PFA.  We invite our readers to jump to Theorem~\ref{Aholdsintheend} where we show how to use this to get $\mathcal{A}$ in the generic extension by $\mathrm{Add}(\omega,1)\ast \mathbb{P}_\kappa.$ The next pages will be instead devoted   to prove the main properties of $\mathbb{P}_\kappa$.
\end{remark}

The next lemma shows that we can indeed force with the coding poset $\mathbb{C}(\varepsilon^{+2+i}_V ,\lambda_\varepsilon)$ after both $\mathbb{T}^{\mathrm{NS}}_\varepsilon$ and $\mathbb{T}^{\mathrm{NS}^+}_\varepsilon$. We just give details when the iteration chooses the former forcing -- for the latter the argument is analogous.
\begin{lemma}\label{Closure}$\one\forces_{\mathbb{P}_\varepsilon\ast \dot{\mathbb{K}}_\varepsilon}``\dot{\mathbb{T}}^{\mathrm{NS}}_\varepsilon$ preserves the Mahloness of $\lambda_\varepsilon$''.
\end{lemma}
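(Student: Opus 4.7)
The plan is to reduce the preservation of Mahloness of $\lambda_\varepsilon$ to two ingredients: (1) $\lambda_\varepsilon$ is already Mahlo in $V[G_\varepsilon\ast K_\varepsilon]$; (2) $\mathbb{T}^{\mathrm{NS}}_\varepsilon$ is $\lambda_\varepsilon$-cc in $V[G_\varepsilon\ast K_\varepsilon]$. Ingredient~(1) is the easy half: $\lambda_\varepsilon\in\mathcal{S}$ is measurable in $V$, and by the inductive structure of the iteration one checks $|\mathbb{P}_\varepsilon\ast\dot{\mathbb{K}}_\varepsilen}|\leq \varepsilon^{+}<\lambda_\varepsilon$ (using GCH together with the bound $\lambda_\sigma\leq\varepsilon$ for inaccessible $\sigma<\varepsilon$ established in the claim inside Lemma~\ref{lemma: C1IH}). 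L\'evy--Solovay then yields that measurability, and in particular Mahloness, of $\lambda_\varepsilon$ persists into $V[G_\varepsilon\ast K_\varepsilon]$.

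For ingredient~(2) I would proceed in two stages. Factorwise: fix $E\in\mathscr{E}_{\varepsilon,\mathcal{S}}$. In $M_{E^*}[G_\varepsilon\ast K_\varepsilon]$ the factor $j_{E^*}(\mathbb{P}_{\crit(E^*)})/(G_\varepsilon\ast K_\varepsilon)$ is an Easton-supported iteration of length $\lambda_\varepsilon$ with stages bounded below $\lambda_\varepsilon$, and the standard Easton analysis shows it is $\lambda_\varepsilon$-Knaster there. The transfer of Knaster-ness to $V[G_\varepsilon\ast K_\varepsilon]$ rests on $V_{\lambda_\varepsilon}\subseteq M_{E^*}$: any putative $V[G_\varepsilon\ast K_\varepsilon]$-antichain of size $\lambda_\varepsilon$ is coded by a sequence of nice names of rank below $\lambda_\varepsilon$, and such sequences live inside $V_{\lambda_\varepsilon}^{\mathbb{P}_\varepsilon\ast\mathbb{K}_\varepsilon}\subseteq M_{E^*}[G_\varepsilon\ast K_\varepsilon]$, contradicting the $M_{E^*}$-side Knaster property. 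Productwise: GCH gives $|\mathscr{E}_{\varepsilon,\mathcal{S}}|\leq 2^\varepsilon=\varepsilon^{+}<\lambda_\varepsilon$, and the full-support product of fewer than $\lambda_\varepsilon$-many $\lambda_\varepsilon$-Knaster posets is itself $\lambda_\varepsilon$-Knaster via a routine delta-system argument coordinate by coordinate. Hence $\mathbb{T}^{\mathrm{NS}}_\varepsilon$ is $\lambda_\varepsilon$-Knaster, and a fortiori $\lambda_\varepsilon$-cc.

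With (1) and (2) in hand, the conclusion is that $\mathbb{T}^{\mathrm{NS}}_\varepsilon$ is a $\lambda_\varepsilon$-cc forcing of cardinality at most $\lambda_\varepsilon$ (again using GCH) preserving Mahloness of $\lambda_\varepsilon$: strong-limit-ness survives the cardinality bound, while regularity and stationarity of the set of inaccessibles below $\lambda_\varepsilon$ survive the chain condition. The main obstacle is the absoluteness of the Knaster property between $M_{E^*}[G_\varepsilon\ast K_\varepsilon]$ and $V[G_\varepsilon\ast K_\varepsilon]$: the model $M_{E^*}$ is only $\delta$-closed in $V$ for $\delta=\crit(E^*)<\varepsilon$, so it may in principle miss $V$-antichains of rank close to $\lambda_\varepsilon$. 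The transfer therefore hinges on representing antichains by nice-name fragments of bounded rank, which $V_{\lambda_\varepsilon}\subseteq M_{E^*}$ places inside $M_{E^*}[G_\varepsilon\ast K_\varepsilon]$.
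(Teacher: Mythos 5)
There is a genuine gap at the final step. From ``$\mathbb{T}^{\mathrm{NS}}_\varepsilon$ is $\lambda_\varepsilon$-cc'' you conclude that ``regularity and stationarity of the set of inaccessibles below $\lambda_\varepsilon$ survive the chain condition.'' The chain condition preserves the regularity of $\lambda_\varepsilon$ itself and the stationarity (as a set) of any ground-model stationary subset of $\lambda_\varepsilon$, but it says nothing about the regularity of cardinals $\theta<\lambda_\varepsilon$. In fact $\mathbb{T}^{\mathrm{NS}}_\varepsilon$ demonstrably singularizes and collapses cofinally many inaccessibles below $\lambda_\varepsilon$: each factor $j_{E^*}(\mathbb{P}_{\crit(E^*)})/(G_\varepsilon\ast K_\varepsilon)$ is a tail of the main iteration as computed in $M_{E^*}$, and at each of its inaccessible stages $\sigma$ it forces with a coding poset $\mathbb{C}(\sigma^{+2+i},\lambda_\sigma)$ that makes $\lambda_\sigma$ the \emph{first} inaccessible past $\sigma$, killing every inaccessible in between. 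So the ground-model stationary set of inaccessibles does not remain a set of regulars, and Mahloness of $\lambda_\varepsilon$ does not follow from the chain condition alone. This is precisely where the paper does its real work: it isolates the set $A\cap C$ of weakly compact $\theta>\varepsilon$ that are accumulation points of every $\Sigma_{E^*}:=j_{E^*}(\mathcal{S})\cap\lambda_\varepsilon$ (a $U_\varepsilon$-large, hence stationary, set), and for each such $\theta$ it factors $\mathbb{T}^{\mathrm{NS}}_\varepsilon$ into a $\theta$-cc lower part $\mathbb{L}_\theta$ and an upper part absorbed by a $\theta$-closed term forcing $\mathbb{A}_\theta$; Easton's lemma then gives $\theta$-distributivity of $\mathbb{A}_\theta$ over $\mathbb{L}_\theta$, and a projection argument shows $\theta$ stays regular. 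None of this is recoverable from your two ingredients.

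A secondary issue: the productivity of the $\lambda_\varepsilon$-cc for the full-support product of $|\mathscr{E}_{\varepsilon,\mathcal{S}}|$-many factors is not a ``routine delta-system argument coordinate by coordinate.'' Refining a $\lambda_\varepsilon$-sized family coordinatewise requires intersecting $|\mathscr{E}_{\varepsilon,\mathcal{S}}|$-many $\lambda_\varepsilon$-sized subsets, which need not leave a set of size $\lambda_\varepsilon$; the paper instead invokes the weak compactness of $\lambda_\varepsilon$ and the Cox--L\"ucke productivity result for chain conditions at weakly compact cardinals. Your ingredient~(1) (smallness of $\mathbb{P}_\varepsilon\ast\dot{\mathbb{K}}_\varepsilon$ relative to the measurable $\lambda_\varepsilon$, hence L\'evy--Solovay) is fine and is implicitly used by the paper as well.
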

\begin{proof}

 For the scope of this argument our ground model (denoted $W$) will be $V[G]$ being $G\s \mathbb{P}_\varepsilon\ast \mathbb{K}_\varepsilon$ a  $V$-generic filter.  
Recall that members of $\mathcal{S}$ were measurable in $V$. So let us fix $U_\varepsilon$  a normal measure (in $W$) 
on $\lambda_\varepsilon$ and set $$A:=\{\theta<\lambda_\varepsilon\mid \text{$\theta$ is weakly compact and $\theta>\varepsilon$}\}\in U_\varepsilon.$$ 


  We may assume that for each $E\in \mathscr{E}_{\varepsilon,S}$ the set 
  $\Sigma_{E^*}:=j_{E^*}(\mathcal{S})\cap \lambda_\varepsilon$ is unbounded in $\lambda_\varepsilon$ for otherwise the posets $\mathbb{T}^{\mathrm{NS}}_{\varepsilon, E^*}:=j_{E^*}(\mathbb{P}_{\crit(E)})/G$ would be small relative to $\lambda_\varepsilon$ and therefore they will preserve the Mahloness of $\lambda_\varepsilon$. 
  
  Said this let us denote $$\textstyle\text{ $C:=\bigcap_{E\in \mathscr{E}_{\varepsilon,\mathcal{S}}}\acc(\Sigma_{E^*})$,}$$ where $\acc(\Sigma_{E^*})$ are the accumulation points of $\Sigma_{E^*}$.  Note that $C$ is a club in $\lambda_\varepsilon$ in that  $|\mathscr{E}_{\varepsilon,\mathcal{S}}|<\lambda_\varepsilon$.  Let us show that $\lambda_\varepsilon$ remains Mahlo in $W[{\mathbb{T}}_\varepsilon^{\mathrm{NS}}]$:

   \smallskip

   $\br$ First, since  $\lambda_\varepsilon$ is weakly compact in $W$, the poset ${\mathbb{T}}^{\mathrm{NS}}_\varepsilon$ is $\lambda_\varepsilon$-cc: Each factor is $\lambda_\varepsilon$-cc and the support of the product (i.e., $|\mathscr{E}_{\varepsilon,\mathcal{S}}|$) is smaller than $\lambda_\varepsilon$. Thus, by weak compactness of $\lambda_\varepsilon$, $\mathbb{T}_{\varepsilon}^{\mathrm{NS}}$ is $\lambda_\varepsilon$-cc (see  \cite[Proposition~1.1]{CoxLucke}). Thus, $\lambda_\varepsilon$ remains regular in any extension by $\mathbb{T}^{\mathrm{NS}}_\varepsilon:=(\dot{\mathbb{T}}^{\mathrm{NS}}_\varepsilon)_G$.

\smallskip

   $\br$ Let $H\s \mathbb{T}^{\mathrm{NS}}_\varepsilon$ be generic and let us argue that, in $W[H]$, $A\cap C$ is a stationary set of regular cardinals below $\lambda_\varepsilon$. Clearly, $A\cap C$ is stationary in $W[H]$ because it belongs to the normal measure $U_\varepsilon$ and ${\mathbb{T}}^{\mathrm{NS}}_\varepsilon$ is $\lambda_\varepsilon$-cc.
   
   Let $\theta\in A\cap C$. Since $\theta$ is a weakly compact cardinal and an accumulation point of each $\Sigma_{E^*}$ it follows that both $(\mathbb{T}^{\mathrm{NS}}_{E^*,\varepsilon}\restriction\theta)$  and $\prod_{E}(\mathbb{T}^{\mathrm{NS}}_{E^*,\varepsilon}\restriction\theta)$ are $\theta$-cc.

   \smallskip
   
  Let us denote the lower and upper parts of $\mathbb{T}^{\mathrm{NS}}_{\varepsilon}$ as follows:
    \begin{itemize}
        \item $\mathbb{L}^{E^*}_{\theta}:= (\mathbb{T}^{\mathrm{NS}}_{E^*,\varepsilon}\restriction\theta)$ and $\mathbb{L}_\theta:= \prod_{E\in\mathscr{E}_{\varepsilon,\mathcal{S}}}\mathbb{L}^{E^*}_\theta$;
        \item $\dot{\mathbb{U}}^{E^*}_{\theta}:= (\dot{\mathbb{T}}^{\mathrm{NS}}_{E^*,\varepsilon}\setminus \theta)$ and $\dot{\mathbb{U}}_\theta:= \prod_{E\in\mathscr{E}_{\varepsilon,\mathcal{S}}}\dot{\mathbb{U}}^{E^*}_\theta.$
    \end{itemize}
   Following Foreman \cite{ForHandbook}, let 
   $\mathbb{A}^{E^*}_{\theta}$ denote the term-space forcing $\mathbb{A}(\mathbb{L}^{E^*}_{\theta},\dot{\mathbb{U}}^{E^*}_{\theta})$. Namely, conditions in $\mathbb{A}^{E^*}_{\theta}$ are $\mathbb{L}^{E^*}_{\theta}$-names $\sigma$ ordered as follows $$\sigma\leq_{\mathbb{A}^{E^*}_{\theta}}\tau\;\Longleftrightarrow\; \one\forces_{\mathbb{L}^{E^*}_{\theta}}\sigma\leq_{\dot{\mathbb{U}}^{E^*}_{\theta}}\tau.\footnote{To ensure the universe of $\mathbb{A}^{E^*}_\theta$ is a set one takes representatives from each equivalence class of $\tau\sim \sigma\;:\Longleftrightarrow\;\one\forces_{\mathbb{L}^{E^*}_\theta} \sigma=\tau$. In practice we will disregard this technical subtlelty.}$$

   Note that $\mathbb{A}^{E^*}_{\theta}$ is a $\theta$-closed partial order in $W$ -- this being an immediate consequence of the \emph{Forcing Maximality Principle}.  Thus, the full-support product $\mathbb{A}_\theta:=\prod_{E^*}\mathbb{A}^{E^*}_{\theta}$ is $\theta$-closed in $W$. {Since $\mathbb{L}_{\theta}$ is $\theta$-cc and  $ \mathbb{A}_{\theta}$ is $\theta$-closed, Easton's lemma (see \cite[Fact~5.10]{CumHandBook}) yields} $\one\forces_{\mathbb{L}_{\theta}}$ $``\mathbb{A}_{\theta}$ is $\theta$-distributive''. In particular,  $\mathbb{L}_\theta\times \mathbb{A}_\theta$ preserves the regularity of $\theta$. 

   \smallskip

   To complete the proof note the following:
   \begin{enumerate}
        \item[$(\alpha)$] $\mathbb{T}^{\mathrm{NS}}_\varepsilon$ projects onto $\mathbb{L}_\theta$;
       \item[$(\beta)$] $\one\forces_{\mathbb{L}_\theta}\mathbb{T}^{\mathrm{NS}}_\varepsilon/\dot{G}_{\mathbb{L}_\theta}\simeq \dot{\mathbb{U}}_\theta$;
       \item[$(\gamma)$] $\one\forces_{\mathbb{L}_\theta}\mathbb{A}_\theta$ projects to $\dot{\mathbb{U}}_\theta.$
   \end{enumerate}
   $(\alpha)$ is witnessed by $\langle p_\alpha\mid \alpha<|\mathscr{E}_{\varepsilon,\mathcal{S}}|\rangle\mapsto \langle p_\alpha\restriction\theta \mid \alpha<|\mathscr{E}_{\varepsilon,\mathcal{S}}|\rangle$; $(\beta)$ is evident; 
   and $(\gamma)$ is a consequence of the following general fact when applied with respect to $\langle \mathbb{P}_{E^*}\ast\dot{\mathbb{Q}}_{E^*}\mid E\in\mathscr{E}_{\varepsilon,S}\rangle$ where $\mathbb{P}_{E^*}:=\mathbb{L}^{E^*}_{\theta}$ and $\dot{\mathbb{Q}}_{E^*}:=\dot{\mathbb{U}}^{E^*}_{\theta}$.  
   \begin{claim}
For a family of posets        $\langle\mathbb{P}_\gamma\ast \dot{\mathbb{Q}}_\gamma\mid \gamma<\Gamma\rangle$ the trivial condition of $\prod_{\gamma<\Gamma}\mathbb{P}_\gamma$ forces that $\prod_{\gamma<\Gamma}\mathbb{A}(\mathbb{P}_\gamma,\dot{\mathbb{Q}}_\gamma)$ projects to $\prod_{\gamma<\Gamma}\dot{\mathbb{Q}}_\gamma$.
   \end{claim}
   \begin{proof}[Proof of claim]
       Let $K\s\prod_{\gamma<\Gamma}\mathbb{P}_\gamma$ be $V$-generic and for each $\gamma<\Gamma$ denote by $K_\gamma$ the induced $\mathbb{P}_\gamma$-generic. Working in $V[K]$, let us consider the map $$\pi\colon \langle \sigma_\gamma\mid \gamma<\Gamma\rangle\mapsto \langle (\sigma_\gamma)_{K_\gamma}\mid \gamma<\Gamma\rangle.$$
       Clearly, this is well-defined. It is also order-preserving as, for each $\gamma<\Gamma$, if $\one\forces_{\mathbb{P}_\gamma}\sigma_\gamma\leq_{\dot{\mathbb{Q}}_\gamma} \rho_\gamma$ then $(\sigma_\gamma)_{K_\gamma}\leq (\rho_\gamma)_{K_\gamma}.$ Finally, let us show that $\pi$ is a projection. Suppose that $\langle q_\gamma\mid \gamma<\Gamma\rangle\leq \langle (\sigma_\gamma)_{K_\gamma}\mid \gamma<\Gamma\rangle$. Since this is a true statement in $V[K]$ there is $p\in K$ forcing {$``\langle \dot{q}_\gamma\mid \gamma<\Gamma\rangle\leq \langle\sigma_\gamma\mid \gamma<\Gamma\rangle$''.} For each $\gamma<\Gamma$ let us define an auxiliary $\mathbb{P}_\gamma$-name $\tau_\gamma$ as follows: 
$$\tau_\gamma:=\{\langle \sigma, r\rangle\mid (r\parallel p_\gamma\,\Rightarrow\, \sigma=\dot{q}_\gamma)\,\vee\, (r\perp p_\gamma\,\Rightarrow\, \sigma=\sigma_\gamma)\}.$$
By design, $p_\gamma\forces_{\mathbb{P}_\gamma}\tau_\gamma=\dot{q}_\gamma$ and $\one\forces_{\mathbb{P}_\gamma}\tau_\gamma\leq \sigma_\gamma$. Thus, $\langle\tau_\gamma\mid \gamma<\Gamma\rangle$ is stronger than $\langle\sigma_\gamma\mid \gamma<\Gamma\rangle$ and its image under $\pi$ is exactly $\langle q_\gamma\mid \gamma<\Gamma\rangle$.
   \end{proof} 
   Combining $(\alpha)$--$(\gamma)$ we conclude that $\mathbb{T}^{\mathrm{NS}}_\varepsilon$ preserves the regularity of $\theta$. This completes the proof of the lemma.
\end{proof}

Let us fix $G\s\mathrm{Add}(\omega,1)\ast \dot{\mathbb{P}}_\kappa$ a $V$-generic filter. 
\begin{lemma}[Properties of the generic extension]\label{lemmacoding}\hfill
    \begin{enumerate}
   \item  Each member of $\mathcal{S}\cup\{\kappa\}$  
   is inaccessible in $V[G]$. Moreover, every successor inaccessible in $V[G]$ is a  successor member of  $\mathcal{S}$;
        \item $\mathcal{S}$ is definable with ordinal parameters in $V[G]$. Specifically, $$\mathcal{S}=\{\varepsilon<\kappa\mid V[G]\models \varphi(\varepsilon, \varepsilon^{+3}_V , \varepsilon^{+4}_V)\}$$ where $\varphi(\varepsilon, \varepsilon^{+3}_V , \varepsilon^{+4}_V)$ is the conjunction of
        \begin{itemize}
            \item  $``\varepsilon$ is inaccessible'',
            \item  $`` \varepsilon^{+3}_V $ is a cardinal'',
            \item  and $`` \varepsilon^{+4}_V $ is not a cardinal''.
        \end{itemize}
        \item If $\varepsilon<\kappa$ is a $V[G]$-measurable cardinal then  $\mathcal{S}\cap \varepsilon$ is stationary in 
        $$\text{$V$,  $V[G_\varepsilon]$ and  $V[G]$;}$$
    \end{enumerate}
\end{lemma}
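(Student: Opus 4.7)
The plan is to verify each clause by tracing the effect of $\mathrm{Add}(\omega,1)\ast\mathbb{P}_\kappa$ on the cardinal and inaccessibility structure. For the forward direction of (1), to see that each $\varepsilon\in\mathcal{S}\cup\{\kappa\}$ remains inaccessible in $V[G]$, I would factor through stage $\varepsilon$: the initial segment $\mathrm{Add}(\omega,1)\ast\mathbb{P}_\varepsilon$ has size less than $\varepsilon$ by Easton support and is $\varepsilon$-cc, preserving both regularity and strong limitness of $\varepsilon$; the stage forcing $\mathbb{Q}_\varepsilon$ is $\varepsilon$-closed by Clause~\eqref{C4HI} of the Induction Hypothesis (and directly in Case~$(\beth)$); and the tail $\mathbb{P}_\kappa/G_{\varepsilon+1}$ is highly closed relative to $\varepsilon$. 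For $\kappa$ itself, $|\mathbb{P}_\kappa|=\kappa$ together with the $\kappa$-cc of $\mathbb{P}_\kappa$ preserves inaccessibility.

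For the moreover in (1), let $\mu<\kappa$ be a successor inaccessible in $V[G]$ with immediate inaccessible predecessor $\mu^\flat<\mu$; both are then inaccessible in $V$. The crucial ingredient is that the coding poset $\mathbb{C}(\sigma^{+2+i}_V,\lambda_\sigma)$ added at every $V$-inaccessible stage $\sigma$ makes $\lambda_\sigma$ the first inaccessible past $\sigma^{+2+i}_V$ in the extension, and since inaccessibility is downward absolute, no $V$-inaccessible strictly between $\sigma^{+2+i}_V$ and $\lambda_\sigma$ survives in $V[G]$. Applied at stage $\mu^\flat$, this forces the next inaccessible of $V[G]$ above $\mu^\flat$ to be $\lambda_{\mu^\flat}\in\mathcal{S}$, whence $\mu=\lambda_{\mu^\flat}\in\mathcal{S}$. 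Since $\sup(\mathcal{S}\cap\mu)\leq\mu^\flat<\mu$, $\mu$ is a successor member of $\mathcal{S}$.

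For (2), the formula $\varphi$ encodes the case split in Definition~\ref{DefinitionMainIteration}. If $\varepsilon\in\mathcal{S}$, the stage-$\varepsilon$ coding $\mathbb{C}(\varepsilon^{+3}_V,\lambda_\varepsilon)$ preserves $\varepsilon^{+3}_V$ as a cardinal (the poset is $\varepsilon^{+3}_V$-directed-closed past an initial factor) while collapsing $\varepsilon^{+4}_V=(\varepsilon^{+3}_V)^+$ via its Levy factor $\Col(\varepsilon^{+3}_V,\dot{c}_0^{++})$. If $\varepsilon$ is inaccessible in $V[G]$ but $\varepsilon\notin\mathcal{S}$ (necessarily a regular limit of $\mathcal{S}$ by (1)), the coding $\mathbb{C}(\varepsilon^{+2}_V,\lambda_\varepsilon)$ instead collapses $\varepsilon^{+3}_V$. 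The tail is closed above $\lambda_\varepsilon$ and leaves this cardinal structure intact. Hence $V[G]\models\varphi(\varepsilon,\varepsilon^{+3}_V,\varepsilon^{+4}_V)$ exactly when $\varepsilon\in\mathcal{S}$.

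For (3), I would argue contrapositively: if $\mathcal{S}\cap\varepsilon$ is non-stationary in $V$, then at stage $\varepsilon$ we enter Case~$(\aleph)$ and $\mathbb{Q}_\varepsilon$ contains $\mathbb{K}_\varepsilon$. By Proposition~\ref{PropertiesKonigposet}(4), $\mathbb{K}_\varepsilon$ destroys the measurability of $\varepsilon$ via an $\varepsilon$-regressive $\varepsilon$-Kurepa tree $T$ with $\varepsilon^+$-many branches; since the tail is $\varepsilon^{+2+i}_V$-closed (Clause~\eqref{C4HI}), no new subsets of $\varepsilon$ nor new branches of $T$ are added, so $T$ continues to witness non-measurability in $V[G]$. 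Stationarity transfers from $V$ to $V[G_\varepsilon]$ via the $\varepsilon$-cc of $\mathbb{P}_\varepsilon$ (remark following Definition~\ref{DefinitionMainIteration}) and from $V[G_\varepsilon]$ to $V[G]$ via the $\varepsilon$-closure of $\mathbb{P}_\kappa/G_\varepsilon$. The main obstacle will be the moreover in (1): making rigorous the claim that the only inaccessibles of $V[G]$ below $\kappa$ are members of $\mathcal{S}$ and their limits requires simultaneously marshalling downward absoluteness of inaccessibility, the genericity of the coding at each relevant stage, and the closure computations recorded in Lemmas~\ref{lemma: C1IH}--\ref{lemma: C4IH}.
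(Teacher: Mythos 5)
Your handling of (2), (3) and the ``moreover'' clause of (1) tracks the paper's argument and is essentially correct. The genuine gap is in the forward direction of (1), where your justification rests on two false premises: for $\varepsilon\in\mathcal{S}$ the initial segment $\mathrm{Add}(\omega,1)\ast\mathbb{P}_\varepsilon$ is in general neither of size less than $\varepsilon$ nor $\varepsilon$-cc. Take $\varepsilon=\min\mathcal{S}$, or any successor member of $\mathcal{S}$: there is a (last nontrivial) stage $\sigma<\varepsilon$ with $\lambda_\sigma=\varepsilon$, and $\mathbb{Q}_\sigma$ there contains the coding poset $\mathbb{C}(\sigma^{+2+i}_V,\varepsilon)$, whose club-shooting component $\mathbb{C}(S_{\varepsilon,\sigma^{+2+i}_V})$ already has size $\varepsilon$ and carries an antichain of size $\varepsilon$ (the singletons $\{\alpha\}$ for $\alpha\in S_{\varepsilon,\sigma^{+2+i}_V}$ are pairwise incompatible under end-extension), possibly together with the tail product $\mathbb{T}^{\mathrm{NS}}_\sigma$, itself an iteration of length $\lambda_\sigma=\varepsilon$. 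So smallness and chain conditions of $\mathbb{P}_\varepsilon$ cannot be what preserves $\varepsilon$. The danger lies strictly below stage $\varepsilon$ (your closure observations about $\mathbb{Q}_\varepsilon$ and $\mathbb{P}_\kappa/G_{\varepsilon+1}$ only control the iteration from $\varepsilon$ onward): the forcing at stage $\sigma$ collapses cardinals cofinally in $\varepsilon$ and shoots a club through the singular cardinals below $\varepsilon$, and one must argue that $\varepsilon=\lambda_\sigma$ nonetheless survives. That is exactly what the stated properties of the coding poset (the $\chi$-distributivity of $\mathbb{C}(S_{\chi,\cdot})$ coming from fatness, plus the factorization of the collapse product) together with Lemma~\ref{Closure} (preservation of the Mahloness of $\lambda_\sigma$ by $\mathbb{T}^{\mathrm{NS}}_\sigma$, which uses the measurability and weak compactness of members of $\mathcal{S}$) are for; your proposal never invokes them in (1), even though your ``moreover'' step silently relies on the resulting fact that stage $\sigma$ makes $\lambda_\sigma$ \emph{the first inaccessible past $\sigma$} rather than merely killing the inaccessibles in between.

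The paper instead proves (1) by induction along the increasing enumeration $\langle\theta_\alpha\mid\alpha<\kappa\rangle$ of $\mathcal{S}$. At successor steps the stage forcing at the inaccessible predecessor of $\theta_{\alpha+1}$ explicitly arranges $\theta_{\alpha+1}=\lambda_\sigma$ to be the first inaccessible past $\sigma$, in particular preserving its inaccessibility. At limit steps with $\theta_\beta=\sup_{\alpha<\beta}\theta_\alpha$ a separate argument, absent from your proposal, is needed: a cofinal $f\colon\lambda\rightarrow\theta_\beta$ with $\lambda<\theta_\beta$ would already lie in $V[G_{\theta_\alpha}]$ for some $\alpha<\beta$ with $\lambda<\theta_\alpha$, i.e.\ in a generic extension by a forcing of size ${<}\theta_\beta$ of a model in which $\theta_\beta$ is measurable --- impossible. (Your $\varepsilon$-cc reasoning does happen to apply at those limit members of $\mathcal{S}$ that are limits of $\mathcal{S}$, since there all iterands below $\varepsilon$ have size ${<}\varepsilon$ and $\varepsilon$ is Mahlo, but it fails at every successor member and at every limit member $\theta_\beta$ with $\sup_{\alpha<\beta}\theta_\alpha<\theta_\beta$.)
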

\begin{proof}
(1) Since $\mathcal{S}\s\kappa$ is stationary  $\mathbb{P}_\kappa$ is $\kappa$-cc. Thus, $\kappa$ is regular in $V[G]$. The next arguments will show that $\kappa$ is  strong limit  in $V[G]$.

 Let $\langle \theta_\alpha\mid \alpha< \kappa\rangle$ be an enumeration of $\mathcal{S}$. If $\varepsilon<\kappa$ is the first $V$-inaccessible, $V[G_\varepsilon]\models``\varepsilon$ is inaccessible'' and $\lambda_\varepsilon=\theta_0$. Thus, $\mathbb{P}_\varepsilon\ast\dot{\mathbb{Q}}_\varepsilon$ makes $\theta_0$ be the first inaccessible past $\varepsilon$, which is preserved by the rest of the iteration $\mathbb{P}_\kappa/G_{\varepsilon+1}$. Next, suppose that $\langle \theta_\alpha\mid \alpha<\beta\rangle$ remain inaccessible in $V[G]$. If $\beta$ is successor one argues  exactly as before. So suppose $\beta$ is limit.

 \smallskip

 {$\br$ Suppose that $\theta_\beta=\sup_{\alpha<\beta}\theta_\alpha$}. In this case it suffices to check that $$\text{$V[G_{\theta_\beta}]\models ``\theta_\beta$ is regular''.}$$ Towards a contradiction, suppose there is $\lambda<\theta_\beta$ and a cofinal increasing map $f\colon \lambda\rightarrow\theta_\beta$.  Letting $\alpha<\beta$ with $\lambda<\theta_\alpha$ it follows that $f\in V[G_{\theta_\alpha}]$. This is no possible for it is a generic extension by a forcing of size ${<}\theta_\beta$ and $\theta_\beta$ is regular (in fact measurable) in $V$. 

 \smallskip

 $\br$ Suppose that $\theta_\beta$ is greater than $\sigma:=\sup_{\alpha<\beta}\theta_\alpha$. Since $V[G_\sigma]$ thinks that $\theta_\beta$ is measurable  we can let $\varepsilon\in (\sigma,\theta_\beta)$ be the first $V[G_\sigma]$-inaccessible. Clearly $\varepsilon$ is inaccessible in $V[G_\varepsilon]$ and $\theta_\beta=\lambda_\varepsilon$ so  the tail poset $\mathbb{P}_\kappa/G_\varepsilon$ makes $\theta_\beta$ be the first inaccessible past $\varepsilon$. This completes the discussion.

For the moreover part of Clause~(1) suppose that $\theta$ is a successor inaccessible in $V[G]$ and let $\varepsilon$ be its inaccessible predecessor. If $\theta<\lambda_\varepsilon$ then $(\dot{\mathbb{Q}}_\varepsilon)_{G_\varepsilon}$ would collapse $\theta$. Similarly, $\theta$ cannot be larger than $\lambda_\varepsilon$ because this latter remains inaccessible in $V[G]$. Therefore, $\theta=\lambda_\varepsilon\in \mathcal{S}$ and we are done. 

\smallskip

(2) Suppose that $\varepsilon\in \mathcal{S}$. By the  argument in (1), $V[G_\varepsilon]$ thinks that $\varepsilon$ is inaccessible. Thus  we fall  either in \textbf{Case~$(\aleph)$} or \textbf{Case~$(\beth)$} and the design of the iteration 
$\mathbb{P}_\kappa$ ensures that $ \varepsilon^{+3}_V $ is preserved and $ \varepsilon^{+4}_V $ is collapsed.  Conversely, suppose that $\varphi(\varepsilon, \varepsilon^{+3}_V , \varepsilon^{+4}_V )$ holds in $V[G]$. We claim that $V[G_\varepsilon]$ thinks that $\varepsilon$ is inaccessible: Otherwise, $\mathbb{Q}_\varepsilon$ would be the trivial forcing and $\mathbb{P}_\kappa/G_{\varepsilon+1}$ would be (more than) $ \varepsilon^{+4}_V $-closed, hence $\varphi(\varepsilon, \varepsilon^{+3}_V , \varepsilon^{+4}_V )$ would fail. Thus,  $V[G_\varepsilon]$ thinks that $\varepsilon$ is inaccessible and   we fall either in \textbf{Case~$(\aleph)$} or \textbf{Case~$(\beth)$}. Since $ \varepsilon^{+3}_V $ is preserved it must be the case that $\varepsilon\in \mathcal{S}$, as sought. 

  \smallskip

(3)  
Towards a contradiction, suppose that $\mathcal{S}\cap \varepsilon$  is non-stationary in $V[G_\varepsilon]$. Either $\mathcal{S}\cap \varepsilon$ is stationary in $V$ or it is not. In the first scenario, $\mathbb{P}_\varepsilon$ is a $\varepsilon$-cc iteration and as a result $\mathcal{S}\cap \varepsilon$ remains $V[G_\varepsilon]$-stationary -- a contradiction. In the second scenario,  $\mathbb{Q}_\varepsilon$  prevents $\varepsilon$  from being measurable in $V[G]$ -- this is because $\mathbb{Q}_\varepsilon$ forces with $\mathbb{K}_\varepsilon$ (see Proposition~\ref{Konigposet}). This contradicts our departing assumption and establishes that $\mathcal{S}\cap\varepsilon$ is stationary in $V[G_\varepsilon]$. 

Finally, note that $\mathbb{P}_\kappa/G_\varepsilon$ is an $\varepsilon$-closed iteration (see Clause~$(1)_\varepsilon$ in p.\pageref{IH}) so it preserves the stationarity of $\mathcal{S}\cap \varepsilon$  in $V[G]$.
\end{proof}

We are now in conditions to show that axiom $\mathcal{A}$ holds after forcing.

\begin{theorem}\label{Aholdsintheend}
    $\mathcal{A}$ holds in $V[G]_\kappa$.
\end{theorem}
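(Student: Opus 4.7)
The plan is to check both clauses of $\mathcal{A}$ inside $V[G]_\kappa$. Clause~(1) follows directly from Lemma~\ref{lemmacoding}(1): every member of the unbounded set $\mathcal{S}\s\kappa$ is inaccessible in $V[G]$, so $V[G]_\kappa$ contains a proper class of inaccessibles. For Clause~(2), fix a stationary-correct superstrong embedding $j\colon V[G]_\kappa\to M$ with $\crit(j)=\delta$, $j(\delta)=\lambda$ inaccessible, and a successor inaccessible $\theta\in(\delta,\lambda)$ of $V[G]_\kappa$. By Lemma~\ref{lemmacoding}(1), $\theta=\lambda_\varepsilon$ for the unique $V[G]_\kappa$-inaccessible predecessor $\varepsilon<\theta$. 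Since $\mathrm{Add}(\omega,1)\ast\dot{\mathbb{P}}_\kappa$ admits a gap below $\omega_1$, Hamkins's Gap Forcing Theorem~\cite{HamGap} yields a $V_\kappa$-definable elementary embedding $j_0:=j\restriction V_\kappa\colon V_\kappa\to N$ with $\crit(j_0)=\delta$, $j_0(\delta)=\lambda$, $V_\lambda\s N$, and $N$ stationary-correct at $\lambda$ (inherited from the stationary correctness of $j$).

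The crux is to show that the $(\delta,\varepsilon)$-extender $E$ derived from $j_0$ lies in $\mathscr{E}_{\varepsilon,\mathcal{S}}$. Fact~\ref{KeyFactAboutSuperstrong} applied to $j_0$ inside $V_\kappa$ gives a ${\geq}\delta^+$-club $C:=\{\tau<\lambda\mid \delta \text{ is enhanced superstrong with target }\tau\}$. Because $\delta$ is $V[G]_\kappa$-measurable, Lemma~\ref{lemmacoding}(3) gives that $\mathcal{S}\cap\delta$ is stationary in $\delta$ in $V$, so by elementarity $j_0(\mathcal{S}\cap\delta)=j_0(\mathcal{S})\cap\lambda$ is stationary in $\lambda$ in $N$, and stationary correctness propagates this to $V_\kappa$. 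Since members of $j_0(\mathcal{S})\cap\lambda$ are $V_\kappa$-measurables (hence regular of cofinality ${\geq}\delta^+$), the ${\geq}\delta^+$-club $C$ meets the stationary set, providing a witness to satisfiability of the formula $\Phi(E,\varepsilon,\mathcal{S}\cap\varepsilon,x)$ of Notation~\ref{ConvenientNotation} in $V_\kappa$. The indiscernibility of $\mathcal{S}$ (Lemma~\ref{indiscernibility}), whose hypothesis is met because all parameters $E,\varepsilon,\mathcal{S}\cap\varepsilon$ have rank ${\leq}\varepsilon$, then transfers this witness to $x=\lambda_\varepsilon$; together with Lemma~\ref{LemmaAboutPhi} this yields $E\in\mathscr{E}_{\varepsilon,\mathcal{S}}$.

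Let $E^*$ be the $\prec$-least completion of $E$ and $j_{E^*}\colon V\to M_{E^*}$ the associated embedding, with $j_{E^*}(\delta)=\lambda_\varepsilon=\theta$ and $(M_{E^*})^\delta\s M_{E^*}$. By Clause~\eqref{C1HI} of the Induction Hypothesis, $j_{E^*}(\mathbb{P}_\delta)_\varepsilon=\mathbb{P}_\varepsilon$, and by Definition~\ref{DefinitionMainIteration} the $E^*$-coordinate of $\mathbb{T}^{\mathrm{NS}}_\varepsilon$ (respectively $\mathbb{T}^{\mathrm{NS}^+}_\varepsilon$) is precisely the tail $j_{E^*}(\mathbb{P}_\delta)/(G_\varepsilon\ast\dot K_\varepsilon)$ (respectively $j_{E^*}(\mathbb{P}_\delta)/G_\varepsilon$), whose generic is built into $G$. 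Splicing this with $G_\varepsilon\ast K_\varepsilon$ produces, inside $V[G]_\kappa$, an $M_{E^*}$-generic filter $\bar H$ for $j_{E^*}(\mathbb{P}_\delta)$, and a standard Silver lifting yields $j_{E^*}\colon V_\kappa[G_\delta]\to M_{E^*}[\bar H]$ definable in $V[G]_\kappa$. Forming the $(\delta,\theta)$-extender $\tilde E$ derived from this partial lift and taking the $V[G]_\kappa$-internal ultrapower by $\tilde E$ produces an elementary $j_{\tilde E}\colon V[G]_\kappa\to M'$ with $\crit(j_{\tilde E})=\delta$, $j_{\tilde E}(\delta)=\theta$, and $\delta$-closure of $M'$ inside $V[G]_\kappa$ (inherited from $(M_{E^*})^\delta\s M_{E^*}$ in $V$ together with the Easton-support and chain-condition analysis of the factors), which is exactly the tallness of $\delta$ with target $\theta$. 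The main obstacle is the coherence clause $j_F(\mathcal{S})\cap(\varepsilon+1)=\mathcal{S}\cap(\varepsilon+1)$ built into $\Phi$: the witness $F$ has its critical point strictly below $\varepsilon$, so the relevant initial segment of $\mathcal{S}$ is moved nontrivially, and one must carefully combine the stationary-correctness-driven reflection with the indiscernibility transfer to ensure the coherence survives.
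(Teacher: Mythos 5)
Your overall strategy matches the paper's: use the Gap Forcing Theorem to descend to a superstrong extender over $W=V_\kappa$, combine Fact~\ref{KeyFactAboutSuperstrong} with the stationarity of (the image of) $\mathcal{S}$ to find a witness placing the derived extender $E$ in $\mathscr{E}_{\varepsilon,\mathcal{S}}$, transfer the target to $\lambda_\varepsilon=\theta$ by indiscernibility and Lemma~\ref{LemmaAboutPhi}, and then lift $j_{E^*}$ using the anticipated generics built into $\mathbb{T}^{\mathrm{NS}}_\varepsilon$. However, there is a genuine gap at the crux. You locate a target $\tau$ in $C\cap j_0(\mathcal{S}\cap\delta)$, but the definition of $\mathscr{E}_{\varepsilon,\mathcal{S}}$ requires the witnessing superstrong extender to have target in $\mathcal{S}$ itself, and $\Phi$ additionally requires the coherence $j_F(\mathcal{S})\cap(\varepsilon+1)=\mathcal{S}\cap(\varepsilon+1)$. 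Both reduce to the identity $j(\mathcal{S}\cap\delta)=\mathcal{S}\cap j(\delta)$ for the relevant superstrong embeddings, which you never establish; indeed you flag the coherence clause as ``the main obstacle'' and leave it to an unspecified combination of stationary correctness and indiscernibility. Neither tool can do this job: indiscernibility only transfers truth of formulas between members of $\mathcal{S}$ lying above the parameters, and stationary correctness only tells you $j_0(\mathcal{S}\cap\delta)$ is stationary, not that its members belong to $\mathcal{S}$.

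The paper closes exactly this gap with the coding apparatus: by Lemma~\ref{lemmacoding}(2), $\mathcal{S}$ is definable in $W[G]$ from the cardinal pattern $\varphi(\varepsilon,\varepsilon^{+3}_V,\varepsilon^{+4}_V)$ (this is the purpose of the posets $\mathbb{C}(\varepsilon^{+2+i}_V,\lambda_\varepsilon)$ in the iteration), and combining Laver's ground-model definability theorem with the agreements $W[G]_{j(\delta)}\s M_E$ and $W_{j(\delta)}\s M$ one computes $j(\mathcal{S}\cap\delta)=\mathcal{S}\cap j(\delta)$ outright (the Claim inside Lemma~\ref{prelimminarylemma}). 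With this in hand the paper first upgrades $\delta$ to an enhanced superstrong with target some $\lambda\in(C\cap\mathcal{S})\setminus(\theta+1)$, derives $F$ from \emph{that} embedding (so that $F\restriction\varepsilon=E$ and the coherence clause hold by construction), and only then invokes indiscernibility. A secondary, smaller issue in your write-up: you need the extenders witnessing the club of Fact~\ref{KeyFactAboutSuperstrong} to restrict to your fixed $E$ below $\varepsilon$, which is true for extenders derived from $j_0$ but is not part of the statement of the Fact as cited; the paper's reorganization (choose the target in $\mathcal{S}$ first, derive $E$ second) avoids having to check this.
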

\begin{proof}
First note that $V[G]_\kappa=W[G]$ where $W=V_\kappa$.
Working in $W[G]$, let $\delta$  be a stationary-correct superstrong cardinal with inaccessible target. Fix a witnessing extender ultrapower $j\colon W[G]\rightarrow M_E$ and let $\theta\in (\delta, j(\delta))$ be a successor inaccessible. We denote by $\varepsilon$ the inaccessible predecessor of $\theta$.\footnote{Note that $\varepsilon$ may be $\delta$.} 

\begin{lemma}\label{preparingtheground}\hfill
\begin{enumerate}
    \item $\theta\in \mathcal{S}$ and $\theta=\lambda_\varepsilon$;
    \item $\mathcal{S}\cap \delta$ is stationary in $W$;
\end{enumerate}
\end{lemma}
\begin{proof}
(1) By Lemma~\ref{lemmacoding}(1) $\theta\in \mathcal{S}$. It cannot be that $\lambda_\varepsilon<\theta$ for then $\theta$ would not be the successor inaccessible of $\varepsilon$. Likewise, it can either be that $\theta<\lambda_\varepsilon$ as $\varepsilon$ is inaccessible in $W[G_\varepsilon]$ and in that case the poset $\mathbb{Q}_\varepsilon$ would collapse $\theta$. 
Clause (2) is a consequence of Lemma~\ref{lemmacoding}(3).
\end{proof}



\begin{lemma}\label{prelimminarylemma}
   $\delta$ is enhanced superstrong with target $\lambda\in \mathcal{S}\setminus (\theta+1)$ in $W[G]$. 
\end{lemma}
\begin{proof}
First, note that the poset $\mathrm{Add}(\omega,1)\ast \dot{\mathbb{P}}_\kappa$ admits a \emph{gap below $\omega_1$}; to wit, $|\mathrm{Add}(\omega,1)|<\aleph_1$ and $\one\forces_{\mathrm{Add}(\omega,1)}``\dot{\mathbb{P}}_\kappa$ is $\aleph_1$-strategically-closed''. Thus, by \emph{Hamkins' Gap Forcing Theorem} \cite{HamGap}, 
 $j\restriction W\colon W\rightarrow M:= M_E\cap W$ is amenable to $W$ (i.e., $j\restriction x\in W$ for all $x\in W$) and it is definable within $W$. 

 \smallskip

Let us observe that $j\restriction W$ is a superstrong embedding (in $W$) with target $j(\delta)$. Clearly, $\crit(j\restriction W)=\delta$. In addition we claim that $W_{j(\delta)}\s M$: First, since $j(\delta)$ is a $W[G]$-inaccessible limit of inaccessibles it is  an inaccessible limit of members of $\mathcal{S}$ (by Lemma~\ref{lemmacoding}(1)). Since Easton-supports are taken at stage $j(\delta)$ this yields $\mathbb{P}_{j(\delta)}\s W_{j(\delta)}$. In particular, $W_{j(\delta)}[G_{j(\delta)}]$ makes sense. By our assumption upon $M_E$,  $W[G]_{j(\delta)}\s M_E$ but since the model $W_{j(\delta)}[G_{j(\delta)}]$ is always contained in $W[G]_{j(\delta)}$ we infer that
 $$W_{j(\delta)}\s W_{j(\delta)}[G_{j(\delta)}]\cap W\s  M_E\cap W=M.$$
\begin{claim}
  $j(\mathcal{S}\cap \delta)=\mathcal{S}\cap j(\delta)$.  
\end{claim}
\begin{proof}[Proof of claim]
    By Lemma~\ref{lemmacoding}(2), 
$$\mathcal{S}\cap \delta=\{\varepsilon<\delta\mid W[G]\models \text{$\varphi(\varepsilon,\varepsilon^{+3}_W,\varepsilon^{+4}_W)$}\}.$$
By  \emph{Laver's Ground Model Definability theorem} (see \cite{LavGroundModel}),  $W$ is a class definable  within $W[G]$. Thus, since $j$ moves $W$ to $M$, 
$$j(\mathcal{S}\cap \delta)=\{\varepsilon<j(\delta)\mid M_{E}\models\text{$\varphi(\varepsilon,\varepsilon^{+3}_M,\varepsilon^{+4}_M)$}\}.$$
Since $W[G]_{j(\delta)}\s M_E$ it follows that $\varepsilon^{+3}_M$ (resp. $\varepsilon^{+4}_M$) is a cardinal in $M_E$ if and only if it is so in $W[G]$. Similarly, since $W_{j(\delta)}\s M$ necessarily $\varepsilon^{+3+i}_M=\varepsilon^{+3+i}_W$ for $i\in\{0,1\}$. These two facts combined yield 
\begin{equation*}
    j(\mathcal{S}\cap \delta)=\{\varepsilon<j(\delta)\mid W[G]\models\text{$\varphi(\varepsilon,\varepsilon^{+3}_W,\varepsilon^{+4}_W)$}\}=\mathcal{S}\cap j(\delta).\qedhere
\end{equation*}
\end{proof}
Note that  $\mathcal{S}\cap \delta$ is $W[G]$-stationary as
$\delta$ is measurable in $W[G]$ (Lemma~\ref{lemmacoding}). 
Hence $j(\mathcal{S}\cap \delta)$ is stationary in $M_E$ and since this latter model is stationary-correct at $j(\delta)$,  $\mathcal{S}\cap j(\delta)$ is stationary in $W[G]$, as well. 
Finally, $\delta$ is superstrong with inaccessible target $j(\delta)$ so there is a ${\geq}\delta^+$-club  $C\s j(\delta)$ of possible superstrong targets witnessing that  $\delta$ is enhanced superstrong (see Fact~\ref{KeyFactAboutSuperstrong}). In particular, there must be a cardinal $\lambda\in (C\cap \mathcal{S})\setminus \theta+1$ for which $\delta$ is enhanced superstrong with target $\lambda.$ 
\end{proof}
Let $\iota\colon W[G]\rightarrow M$ be a superstrong embedding with $\crit(\iota)=\delta$, $\iota(\delta)=\lambda\in \mathcal{S}\setminus (\theta+1)$ and $M^\delta\s M$. We may assume, without losing any generality, that $\iota$ is the extender ultrapower by some $F^*\in W[G]$.  Once again by \emph{Hamkins'  Gap Forcing theorem} \cite{HamGap}, $F:=F^*\cap W$ is a supestrong extender in $W$ with $j_{F}(\delta)=\lambda$ and $(M_{F})^\delta\cap W\s (M_F).$ 

The definability of $\mathcal{S}$ again implies that $$j_{F}(\mathcal{S}\cap \delta)=j_{F^*}(\mathcal{S}\cap \delta)=\mathcal{S}\cap \lambda.$$
Letting $E:=F\restriction\varepsilon$ 
it is clear that $E\in \mathscr{E}_{\varepsilon, \mathcal{S}}$ (recall Notation~\ref{ConvenientNotation}). 
By indescernibility (i.e., by Lemma~\ref{LemmaAboutPhi}) there is an extender $E^*$ witnessing
$$\langle V_\kappa, \in \rangle\models \Phi(E, \varepsilon, \mathcal{S}\cap\varepsilon,\lambda_\varepsilon).$$ 
\begin{lemma}\label{endgame}
   $W[G]\models ``\delta$ is tall with target $\theta$''. 
\end{lemma}
\begin{proof}
By Lemma~\ref{preparingtheground},  $\lambda_\varepsilon=\theta.$ So $j_{E^*}\colon W\rightarrow M_{E^*}$ is such  that $j_{E^*}(\delta)=\theta$,  $W_{\theta}\s M_{E^*}$ and $(M_{E^*})^\delta\cap W\s M_{E^*}$. 
By our \textbf{Induction Hypothesis}, $j_{E^*}(\mathbb{P}_\delta)_\varepsilon=\mathbb{P}_\varepsilon$ (see also Lemma~\ref{lemma: C1IH}). In particular, $G_\varepsilon$ is generic for $j_{E^*}(\mathbb{P}_\delta)_\varepsilon$ over $M_{E^*}.$ Clearly, for each $H\s j_{E^*}(\mathbb{P}_\delta)/G_\varepsilon$  generic over $W[G_\varepsilon]$ 
    the embedding $j_{E^*}$ lifts (in $W[G_\varepsilon\ast H]$) to $$j_{E^*}\colon W[G_\delta]\rightarrow M_{E^*}[G_\varepsilon\ast H].$$ We argue that there is $H$ which is internal to our  generic extension $W[G]$. 

    \smallskip
    
    In $W[G_\varepsilon]$, $\varepsilon$ is inaccessible so we fall in one of the cases of Definition~\ref{DefinitionMainIteration}. 
    Suppose for instance that we fall  in \textbf{Case~($\aleph$)} -- the other case is treated analogously.  
    The poset $\mathbb{Q}_\varepsilon$  forces over $W[G_\varepsilon\ast K_\varepsilon]$ with $\mathbb{T}^{\mathrm{NS}}_\varepsilon$. By design, this latter poset projects to $\mathbb{T}^{\mathrm{NS}}_{\varepsilon, E^*}:=j_{E^*}(\mathbb{P}_\delta)/(G_\varepsilon\ast K_\varepsilon)$. Thus, any $W[G_\varepsilon\ast K_\varepsilon]$-generic for $(\mathbb{Q}_\varepsilon)_{G_\varepsilon\ast K_\varepsilon}$ yields a $W[G_\varepsilon\ast K_\varepsilon]$-generic for $j_{E^*}(\mathbb{P}_\delta)/(G_\varepsilon\ast K_\varepsilon)$. Let $H_\varepsilon$ be the induced generic. Clearly, $j_{E^*}$ lifts to $$j_{E^*}\colon W[G_\delta]\rightarrow M_{E^*}[G_\varepsilon\ast K_\varepsilon\ast  H_\varepsilon]$$ and the latter is definable in the generic extension  $W[G_{\varepsilon+1}]$.

   Let us now complete the lifting argument. First,  $(\dot{\mathbb{Q}}_\delta)_{G_\delta}$ is $\delta^+$-closed as $\mathcal{S}\cap \delta$ is stationary in $W$ (see \textbf{Case ($\beth$)} of Definition~\ref{DefinitionMainIteration}).
   Since the width\footnote{For the definition of the \emph{width} of an embedding see Footnote~\ref{Footnote: width}.} of the embedding $j_{E^*}$ is ${\leq}\delta$ and  $\mathbb{P}_\kappa/G_\delta$ is $\delta^+$-closed,  $j_{E^*}$ lifts to $$j_{E^*}\colon W[G]\rightarrow M_{E^*}[G_{\varepsilon}\ast K_\varepsilon\ast  H_\varepsilon][j_{E^*}``G_{[\delta, \kappa)}].$$ For details, see \cite[\S15]{CumHandBook}. This lifting  is completely internal to $W[G]$. 
   
 Since $M_{E^*}$ was closed under $\delta$-sequences in $W$  standard arguments show that $M_{E^*}[G_{\varepsilon}\ast K_\varepsilon\ast H_\varepsilon][j_{E^*}``G_{[\delta, \kappa)}]$ is closed under $\delta$-sequences in $W[G]$, as well.  Altogether, $\delta$ is a tall cardinal with target $\theta$ in $W[G]$. 
\end{proof}
This completes the proof of  axiom $\mathcal{A}$ in $W[G]=V[G]_\kappa$.
\end{proof}

\subsection{$\mathcal{A}$ is consistent with Vop\v{e}nka's Principle}

The main result of this section is Theorem~\ref{VopenkaIntheEnd} where we show that $\kappa$ remains a Vop\v{e}nka cardinal after forcing with $\mathbb{P}_\kappa$. Recall that we were assuming that $\kappa$ is almost huge and that $\prec$ is a absolute-enough well-ordering of the set-theoretic universe.

One could be tempted to invoke Brooke-Taylor's preservation theorem of Vop\v{e}nka cardinals \cite[Theorem~15]{Broo} but there is a technical caveat -- our iteration is not  \emph{progressively-directed-closed}.  This is because at a number of stages we are forcing with the Kurepa-type forcing $\mathbb{K}_\varepsilon$ which  is not $\varepsilon$-directed-closed.  Fortunately, for a $\mathcal{U}$-large set of $\delta<\kappa$ (recall that $\mathcal{F}\s \mathcal{U}$ by Lemma~\ref{lemmaalmoshuge}) $\mathbb{P}_\kappa$ contains a dense sub-iteration $\mathbb{D}^\delta$ such that:
\begin{enumerate}\label{ClausesDirectedClosure}
    \item[($\alpha$)] $\mathbb{D}^\delta$ is definable via $\mathcal{S}$, $\delta$ and $\prec$; 
    \item[($\beta$)] $(\mathbb{D}^\delta)_\delta$ forces the tail forcing $(\mathbb{D}^\delta)_{[\delta, \kappa)}$ to be $\delta^+$-directed-closed.
\end{enumerate}

\begin{lemma}\label{Nicesplitting}
    $\{\delta<\kappa\mid \text{There is $\mathbb{D}^\delta\s \mathbb{P}_\kappa$ with $(\alpha)$ and $(\beta)$ above}\}\in\mathcal{U}.$
\end{lemma}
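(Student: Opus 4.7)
The plan is to exploit the huge embedding $j\colon V\to M$ from Definition~\ref{def: hugeness}. By Lemma~\ref{lemmaalmoshuge}, the normal measure $\mathcal{U}=\{X\s\kappa\mid \kappa\in j(X)\}$ extends the Vop\v{e}nka filter $\mathcal{F}$, so Lemma~\ref{Nicesplitting} reduces to proving $\kappa\in j(A)$, where $A$ is the set in question. Working in $M$, I must therefore produce a dense sub-iteration $\mathbb{D}^\kappa\s j(\mathbb{P}_\kappa)$, definable from $j(\mathcal{S})$, $\kappa$ and $j(\prec)$, such that $(\mathbb{D}^\kappa)_\kappa$ forces $(\mathbb{D}^\kappa)_{[\kappa,j(\kappa))}$ to be $\kappa^+$-directed-closed. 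Observe first that $j(\mathbb{P}_\kappa)_\kappa=\mathbb{P}_\kappa$, since $j\restriction V_\kappa=\id$, $j(\mathcal{S})\cap\kappa=\mathcal{S}$, and the absoluteness of $\prec$ given by \eqref{eq: dagger} makes the definitions agree stage-by-stage below $\kappa$; so I set $(\mathbb{D}^\kappa)_\sigma:=\mathbb{P}_\sigma$ for $\sigma<\kappa$.

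For each $\sigma\in[\kappa,j(\kappa))$, I inspect the $\sigma$-th factor $\mathbb{Q}_\sigma$ of $j(\mathbb{P}_\kappa)$ according to Definition~\ref{DefinitionMainIteration}. The sole ingredient that fails to be $\kappa^+$-directed-closed is a factor of the form $\mathbb{K}^M_\sigma$, occurring only in Case~$(\aleph)$: the coding factor $\mathbb{C}(\sigma^{+2+i},\lambda_\sigma)$ is $\sigma^{+2+i}$-directed-closed by design, and the tail-product factors $\mathbb{T}^{\mathrm{NS}}_\sigma$ and $\mathbb{T}^{\mathrm{NS}^+}_\sigma$ are full-support products of iterations of the same shape, which admit $\kappa^+$-directed-closed dense subforcings by recursion. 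I replace each appearance of $\mathbb{K}^M_\sigma$ by its dense subposet $\{(t,h)\in\mathbb{K}^M_\sigma\mid \mathrm{ht}(t)>\kappa\}$ supplied by Proposition~\ref{PropertiesKonigposet}(3) with parameter $\gamma:=\kappa$, and perform the analogous substitution inside every tail factor $j^M_{F^*}(\mathbb{P}_{\crit(F)})/G_\sigma$ contributing to $\mathbb{T}^{\mathrm{NS}}_\sigma$ or $\mathbb{T}^{\mathrm{NS}^+}_\sigma$. The resulting $\mathbb{D}^\kappa$ is then dense in $j(\mathbb{P}_\kappa)$ and manifestly definable from $j(\mathcal{S})$, $\kappa$ and $\prec$.

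Property $(\beta)$ is verified by induction on $\sigma\in[\kappa,j(\kappa))$: every factor after the substitution is $\kappa^+$-directed-closed in the generic extension by the previous stages, while Easton-support iterations of $\kappa^+$-directed-closed posets preserve $\kappa^+$-directed-closure at inaccessible limits; at non-inaccessible limit stages the bounded-support convention keeps directed suprema inside the iteration. Therefore $(\mathbb{D}^\kappa)_\kappa=\mathbb{P}_\kappa$ forces $(\mathbb{D}^\kappa)_{[\kappa,j(\kappa))}$ to be $\kappa^+$-directed-closed, certifying $\kappa\in j(A)$ and hence $A\in\mathcal{U}$.

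The step I expect to be the main obstacle is the coherent treatment of the tail-product factors $j^M_{F^*}(\mathbb{P}_{\crit(F)})/G_\sigma$ appearing in $\mathbb{T}^{\mathrm{NS}}_\sigma$ and $\mathbb{T}^{\mathrm{NS}^+}_\sigma$: since each such factor lives inside its own ultrapower $M_{F^*}$, the $\kappa^+$-directed-closed dense subforcings must be defined uniformly across all these models so as to yield a single definable $\mathbb{D}^\kappa$. This coherence is precisely what the absoluteness of $\prec$ recorded in \eqref{eq: dagger} provides, since it forces the $\prec$-least extenders $F^*$ (Lemma~\ref{LemmaAboutPhi}) to be computed identically in $V$, $M$, and each $M_{F^*}$; consequently the substitution inside each tail factor is canonical and commutes with the ultrapower construction of $\mathbb{T}^{\mathrm{NS}}_\sigma$, closing the argument.
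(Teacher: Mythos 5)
Your route differs from the paper's in its outer packaging: the paper shows that the concrete $\mathcal{U}$-large set $\{\delta<\kappa\mid \mathcal{S}\cap\delta\text{ is stationary}\}$ is contained in the set in question (via Claim~\ref{ClaimDefinableDirected}, a recursion producing dense $|\sigma|^+$-directed-closed subforcings $\dot{\mathbb{D}}_{\varepsilon,\sigma}\s\dot{\mathbb{Q}}_\varepsilon$), whereas you verify $\kappa\in j(A)$ directly by transplanting essentially the same recursive substitution into $M$ above $\kappa$. The inner technical content — replacing Kurepa factors by their height-restricted dense subsets and performing the same replacement uniformly inside the tail-product factors, with coherence guaranteed by the absoluteness of $\prec$ — is the same as the paper's.

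However, there is a genuine gap at the single most delicate stage, $\sigma=\kappa$. You treat all $\sigma\in[\kappa,j(\kappa))$ uniformly and propose to replace any occurrence of $\mathbb{K}^M_\sigma$ by $\{(t,h)\in\mathbb{K}^M_\sigma\mid \mathrm{ht}(t)>\kappa\}$ ``with parameter $\gamma:=\kappa$.'' For $\sigma>\kappa$ this is legitimate (Proposition~\ref{PropertiesKonigposet}(3) applies since $\kappa<\sigma$), but for $\sigma=\kappa$ it is not: conditions in $\mathbb{K}_\kappa$ have trees of height below $\kappa$, so your proposed subposet is empty, and indeed $\mathbb{K}_\kappa$ admits \emph{no} $\kappa^+$-directed-closed dense subset (it is only almost $\kappa$-directed-closed, and it kills the measurability of $\kappa$). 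Your construction survives only because no Kurepa factor occurs at stage $\kappa$ of $j(\mathbb{P}_\kappa)$ at all: since $j(\mathcal{S})\cap\kappa=\mathcal{S}$ is stationary in $\kappa$ (and $M$ computes this correctly by its closure), stage $\kappa$ falls in \textbf{Case~$(\beth)$} of Definition~\ref{DefinitionMainIteration}. You never make this observation, yet it is exactly the pivot of the paper's argument — it is why the paper restricts to those $\delta$ with $\mathcal{S}\cap\delta$ stationary rather than claiming the property for all inaccessible $\delta$. As written, your ``sole problematic ingredient is $\mathbb{K}^M_\sigma$, which I restrict'' step fails at $\sigma=\kappa$; the proof needs the explicit case distinction isolating stage $\kappa$ and invoking the stationarity of $\mathcal{S}$ there.
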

\begin{proof}
    We show that $\{\delta<\kappa\mid \text{$\mathcal{S}\cap \delta$ is stationary}\}\in\mathcal{U}$ is included in the above-displayed set. First we prove a prelimminary claim:
    \begin{claim}\label{ClaimDefinableDirected}
        For each $\varepsilon<\kappa$ inaccessible and $\sigma<\varepsilon$ there is a $\mathbb{P}_\varepsilon$-name for a poset $\dot{\mathbb{D}}_{\varepsilon,\sigma}$ such that the trivial condition of  $\mathbb{P}_\varepsilon$ forces $$\text{$``\dot{\mathbb{D}}_{\varepsilon,\sigma}\in \dot{\hod}_{\{\sigma,\varepsilon, \mathcal{S},\prec\}}\;\wedge\; \dot{\mathbb{D}}_{\varepsilon,\sigma}\s \dot{\mathbb{Q}}_\varepsilon\;\text{is dense and $|\sigma|^+$-directed-closed}$''}.$$
    \end{claim}
    \begin{proof}[Proof of claim]
        We prove this by induction on $\varepsilon<\kappa$. If  $\varepsilon$ happens to be the first inaccessible cardinal we fall in \textbf{Case~$(\aleph)$} of the definition of $\mathbb{P}_\kappa$ (i.e., $\mathcal{S}\cap \varepsilon$ is not stationary). In those circumstances $\mathbb{P}_\varepsilon$ is trivial and $\mathscr{E}_{\varepsilon,\mathcal{S}}=\emptyset$. Thus, for each $\sigma<\varepsilon$  it suffices to take $$\mathbb{D}_{\varepsilon,\sigma}:=\{(p,\dot{c})\in\mathbb{K}_\varepsilon\ast \mathbb{C}(\varepsilon^{+2+i}_V,\lambda_\varepsilon)\mid \mathrm{ht}(t^p)>\sigma\}.$$ 

        Assume the claim holds for all inaccessibles $\epsilon<\varepsilon.$ Suppose that $\mathcal{S}\cap(\varepsilon+1)$ is non-stationary (the other case is treated analogously). Then, we fall in \textbf{Case~$(\aleph)$} of the definition of $\mathbb{P}_\kappa$ (see Definition~\ref{DefinitionMainIteration}). 
There are two cases to consider; namely, either $\mathscr{E}_{\varepsilon,S}=\emptyset$ or $\mathscr{E}_{\varepsilon,S}\neq \emptyset$. In the former scenario it suffices to take $\mathbb{D}_{\varepsilon,\sigma}$ as in the above-displayed equation. Otherwise, $\dot{\mathbb{Q}}_\varepsilon$ takes the form $\dot{\mathbb{K}}_\varepsilon\ast\dot{\mathbb{T}^{\mathrm{NS}}_\varepsilon}\ast \dot{\mathbb{C}}(\varepsilon^{+2+i}_V,\lambda_\varepsilon).$ Let $E\in \mathscr{E}_{\varepsilon,S}$. Set $\delta:=\crit(E)$. Since $\delta<\varepsilon$ our induction hypothesis applied within $M_{E^*}$ yields, for each $\varepsilon\leq \rho<\lambda_\varepsilon$
$$\one\forces^{M_{E^*}}_{j_{E^*}(\mathbb{P}_\delta)_\rho}``\text{There is $\dot{\mathbb{D}}_{\rho,\sigma}\s \mathbb{Q}_{\rho}$ witnessing the claim''}.$$
Let $\dot{\mathbb{D}}^{E^*}_{\varepsilon,\sigma}$ a $j_{E^*}(\mathbb{P}_\delta)_\varepsilon=\mathbb{P}_\varepsilon$-name for the iteration resulting from replacing each $\mathbb{Q}_\rho$ by $\mathbb{D}_{\rho,\sigma}.$ It turns out that this iteration is now definable using 
$$\varepsilon,\sigma,\prec\;\text{and}\;j_{E^*}(\mathcal{S})\cap \lambda_\varepsilon.$$
In turn, 
\begin{itemize}
    \item $E^*$ is definable using $\mathcal{S}\cap \varepsilon$,  $\varepsilon$ and $\lambda_\varepsilon$;
    \item and $\lambda_\varepsilon$ is definable using $\mathcal{S}$ and $\varepsilon$.
\end{itemize}
Therefore $\dot{\mathbb{D}}^{E^*}_{\varepsilon,\sigma}$ is $\mathbb{P}_\varepsilon$-forced to be definable via $\varepsilon,\sigma,\prec$ and $\mathcal{S}$. By the way the iteration $\dot{\mathbb{D}}^{E^*}_{\varepsilon,\sigma}$ is defined at stage $\varepsilon$ it factors as $$\{p\in\dot{\mathbb{K}}_\varepsilon\mid \mathrm{ht}(t^p)>\sigma\}\ast (\dot{\mathbb{D}}^{E^*}_{\varepsilon,\sigma})^{\mathrm{tail}}.$$

\smallskip

The full-support product of the family $\langle (\dot{\mathbb{D}}^{E^*}_{\varepsilon,\sigma})^{\mathrm{tail}}\mid {E\in\mathscr{E}_{\varepsilon,\mathcal{S}}}\rangle $ is forced to be a dense subforcing of $\dot{\mathbb{T}}^{\mathrm{NS}}_\varepsilon$ which is definable via $\varepsilon,\sigma$ and $\mathcal{S}$. Finally, let $\dot{\mathbb{D}}_{\varepsilon,\sigma}$ be a $\mathbb{P}_\varepsilon$-name for the poset
$$\textstyle  \{p\in\dot{\mathbb{K}}_\varepsilon\mid \mathrm{ht}(t^p)>\sigma\}\ast (\prod_{E\in\mathscr{E}_{\varepsilon,\mathcal{S}}}(\dot{\mathbb{D}}^{E^*}_{\varepsilon,\sigma})^{\mathrm{tail}})\ast \mathbb{C}(\varepsilon^{+2+i}_V,\lambda_\varepsilon).$$
This poset is forced to be dense in $\dot{\mathbb{Q}}_\varepsilon$, $|\sigma|^+$-directed-closed and definable using $\{\sigma,\varepsilon,\mathcal{S}\}$. This completes the verification of the induction step.
    \end{proof}
   For each $\delta$ in the set displayed in the lemma, let  $\mathbb{D}^\delta$ be the iteration which up to $\delta+1$ is defined as $\mathbb{P}$ and for all other inaccessible stages $\delta<\varepsilon$ it forces with $\dot{\mathbb{D}}_{\varepsilon,\delta}.$ It follows that $\mathbb{D}^\delta$ is dense in $\mathbb{P}_\kappa$, it is definable via $\mathcal{S},\prec$ and $\delta$ and $(\mathbb{D}^\delta)_\delta$ forces the tail poset $\dot{\mathbb{D}}^\delta_{[\delta,\kappa)}$ be $\delta^+$-directed closed. For this latter claim we use  that $\mathcal{S}\cap \delta$ is stationary as then we fall in \textbf{Case~($\beth$)} of Definition~\ref{DefinitionMainIteration}, which does not involve the Kurepa-type forcing.
\end{proof}

A second important fact is next lemma. We refer our readers to \S\ref{Preliminaries}, Definition~\ref{def: aextendible}, where we supplied the relevant definitions. 
\begin{lemma}\label{Manyextendibles}
    $\{\delta<\kappa\mid \langle V_\kappa,\in, \mathbb{P}_\kappa, \mathcal{S}\rangle\models\text{$``\delta$ is $C^{(n)}_{\mathbb{P}_\kappa}$-$\mathcal{S}$-extendible''}\}\in\mathcal{U}$.
\end{lemma}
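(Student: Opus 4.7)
The plan is to show the set in question is in the Vop\v{e}nka filter $\mathcal{F}$ on $\kappa$; combined with $\mathcal{F}\s\mathcal{U}$ from Lemma~\ref{lemmaalmoshuge} applied to the hugeness embedding of $\kappa$, this gives the result. Since $\mathbb{P}_\kappa$ and $\mathcal{S}$ are both $V_\kappa$-definable classes, this amounts to a parameterised instance of the classical Vop\v{e}nka-to-$C^{(n)}$-extendibility equivalence (cf.\ \cite[\S3]{Bag}), now carried out in the language enriched with the two class predicates.

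Concretely, for each $\mu<\kappa$ and each $n<\omega$ I would consider the natural sequence of structures
\[
\mathcal{M}^\mu_\alpha:=\langle V_{f(\alpha)},\in,\{\alpha\},R^\mu_\alpha\rangle \qquad (\mu<\alpha<\kappa),
\]
where $f(\alpha)$ is the least element of $C^{(n+2)}_{\mathbb{P}_\kappa}$ above $\alpha$ and $R^\mu_\alpha$ encodes $\mathbb{P}_\kappa\cap V_{f(\alpha)}$, $\mathcal{S}\cap f(\alpha)$ and a distinguished copy of $\mu$ via Kuratowski tagging. By Vop\v{e}nka-ness of $\kappa$, the set $Y^\mu\s\kappa$ of critical points of elementary embeddings $\iota\colon \mathcal{M}^\mu_\alpha\to \mathcal{M}^\mu_\beta$ between terms of this sequence belongs to $\mathcal{F}$. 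Any such $\iota$ fixes $\mu$, so $\delta:=\crit(\iota)>\mu$, and the $\Sigma_{n+2}$-correctness of $f(\alpha),f(\beta)$ with respect to $\mathbb{P}_\kappa$ transfers through $\iota$ to give $\iota(\delta)\in C^{(n)}_{\mathbb{P}_\kappa}$. Hence $\iota\restriction V_\nu\colon V_\nu\to V_{\iota(\nu)}$ witnesses $\nu$-$C^{(n)}_{\mathbb{P}_\kappa}$-$\mathcal{S}$-extendibility of $\delta$ in $V_\kappa$ for every $\nu\in (\delta, f(\alpha))$.

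Applying the previous observation uniformly in $\mu$ and taking the diagonal intersection $\triangle_{\mu<\kappa} Y^\mu\in \mathcal{F}$ (using the normality of $\mathcal{F}$, see \cite[Proposition~24.14]{Kan}), one produces an $\mathcal{F}$-measure one set of $\delta<\kappa$ which occur as critical points of crit-$\delta$ embeddings in the $\mu$-sequence for cofinally many $\mu<\delta$. The unboundedness of the associated indices $\alpha$ then yields extendibility witnesses at every level $\nu<\kappa$ above $\delta$, establishing that every such $\delta$ is fully $C^{(n)}_{\mathbb{P}_\kappa}$-$\mathcal{S}$-extendible in $V_\kappa$, as required.

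The main technical obstacle is the verification that $\iota(\delta)\in C^{(n)}_{\mathbb{P}_\kappa}$ as a statement of $V_\kappa$ (rather than merely of $V_{f(\beta)}$); this is precisely where the extra two reflection layers built into the definition of $f$ are essential. A secondary, milder concern is arranging that the same $\delta$ is realised as a critical point of embeddings with $\alpha$-indices unbounded in $\kappa$, which follows by iterating Vop\v{e}nka in the tail of each $\mu$-sequence.
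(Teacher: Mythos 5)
Your overall strategy coincides with the paper's: show that the displayed set belongs to the Vop\v{e}nka filter $\mathcal{F}$ by exhibiting a natural sequence of structures (carrying $\mathbb{P}_\kappa$ and $\mathcal{S}$ as predicates) all of whose embeddings have critical point in the set, and then invoke $\mathcal{F}\s\mathcal{U}$ from Lemma~\ref{lemmaalmoshuge}. However, there is a genuine gap at the central step: you never actually get a \emph{single} $\delta$ that is $\lambda$-$C^{(n)}_{\mathbb{P}_\kappa}$-$\mathcal{S}$-extendible for \emph{all} $\lambda<\kappa$. A Vop\v{e}nka embedding $\iota\colon\mathcal{M}^\mu_\alpha\rightarrow\mathcal{M}^\mu_\beta$ with $\crit(\iota)=\delta$ only yields witnesses for levels $\nu$ below $f(\alpha)$, and nothing forces the indices $\alpha$ realizing a \emph{fixed} $\delta$ as a critical point to be unbounded in $\kappa$: since the structures carry $\{\alpha\}$ as a distinguished element one in fact gets $\alpha\geq\delta$, but all such $\alpha$ could equal $\delta$ itself. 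Your diagonal intersection $\triangle_{\mu<\kappa}Y^\mu$ only controls the critical point from \emph{below} (it ensures $\delta\in Y^\mu$ for all $\mu<\delta$), which is irrelevant to how far \emph{up} the witnessing structures reach; and ``iterating Vop\v{e}nka in the tail'' produces new embeddings with no control over their critical points, so it does not re-realize the same $\delta$ at higher indices. This is not a secondary concern --- it is the whole difficulty of the lemma.

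The paper's proof resolves exactly this point with the least-counterexample device: define $f(\delta)=\delta$ if $\delta$ is fully $C^{(n)}_{\mathbb{P}_\kappa}$-$\mathcal{S}$-extendible in $\mathfrak{M}=\langle V_\kappa,\in,\mathbb{P}_\kappa,\mathcal{S}\rangle$, and otherwise $f(\delta)=\delta+\sigma$ with $\sigma$ least such that $(\delta+\sigma)$-extendibility fails; let $C$ be the club of closure points of $f$ lying in $(C^{(n)}_{\mathbb{P}_\kappa})^{\langle V_\kappa,\in,\mathbb{P}_\kappa\rangle}$, and build the structures $\mathcal{N}_\alpha$ of height $\gamma_\alpha\in C$ carrying $C\cap\gamma_\alpha$ and $\mathcal{S}\cap\gamma_\alpha$ as predicates (together with a first natural sequence forcing all critical points into $C$). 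Then if $\crit(j)=\delta$ were \emph{not} fully extendible, one gets $f(\delta)=\delta+\sigma<\gamma_\alpha$ (closure) and $f(\delta)<j(\delta)$ (since $j(\delta)\in C$), so $j\restriction V_{\delta+\sigma}$ witnesses precisely the level of first failure --- a contradiction. Note that this also supplies the inequality $j(\delta)>\delta+\sigma$ required by Definition~\ref{def: aextendible}, a point your restriction-to-$V_\nu$ step leaves unaddressed for $\nu\geq\iota(\delta)$. Without this (or an equivalent uniformization over $\lambda$), your argument establishes only that for each $\lambda$ \emph{some} $\delta$ is $\lambda$-extendible, which is strictly weaker than the lemma.
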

\begin{proof}
Let $\mathfrak{M}:=\langle V_\kappa,\in,\mathbb{P}_\kappa, \mathcal{S}\rangle$ and $f\colon \kappa\rightarrow\kappa$ be the function defined as
    $$f(\delta):=\begin{cases}
        \delta, & \text{if $\mathfrak{M}\models \text{$``\delta$ is $C^{(n)}_{\mathbb{P}_\kappa}$-$\mathcal{S}$-extendible''}$};\\
        \delta+\sigma, & \text{$\sigma$  is least with $\mathfrak{M}\models\text{$``\delta$ is not $(\delta+\sigma)$-$C^{(n)}_{\mathbb{P}_\kappa}$-$\mathcal{S}$-extendible''}$}.
    \end{cases}$$
    The set $$C:=\{\delta<\kappa\mid f``\delta\s \delta\,\wedge\, \langle V_\delta,\in,\mathbb{P}_\kappa \cap V_\delta\rangle\prec_{\Sigma_n}\langle V_\kappa,\in,\mathbb{P}_\kappa\rangle\}$$ is a club because it is the intersection of the closure points of $f$ with the club $(C^{(n)}_{\mathbb{P}_\kappa})^{\langle V_\kappa,\in,\mathbb{P}_\kappa\rangle}$ (by inaccessibility of $\kappa$, \cite[Proposition~6.2]{Kan}). 
    
    By normality of the Vop\v{e}nka filter (\cite[Proposition~24.14]{Kan}) $C\in \mathcal{F}$ so that it belongs to $\mathcal{U}$ as well.  Therefore, there is a natural sequence $\langle \mathcal{M}_\alpha\mid\alpha<\kappa\rangle$ such that every elementary embedding $j\colon \mathcal{M}_\alpha\rightarrow \mathcal{M}_\beta$ has $\crit(j)\in C$. 

     \smallskip

    Denote by $X$ the set displayed in the statement of the lemma. Our goal is to show that $X\in \mathcal{F}$; equivalently, that $\kappa\setminus X$ is not Vop\v{e}nka in $\kappa$. To show this we consider yet another natural sequence $\langle \mathcal{N}_\alpha\mid\alpha<\kappa\rangle$ given by
    $$\mathcal{N}_\alpha:=\langle V_{\gamma_\alpha},\in,\{\alpha\}, \mathcal{M}_\alpha, C\cap\gamma_\alpha,\mathcal{S}\cap\gamma_\alpha\rangle$$
    where $\gamma_\alpha:=\min \{\rho\in C\mid \rho>\sup(\mathcal{M}_\alpha\cap\Ord)\}$. 
    
    Since $\kappa$ is assumed to be almost huge (hence Vop\v{e}nka)  there are $\alpha<\beta<\kappa$ and non-trivial elementary embeddings  $j\colon \mathcal{N}_\alpha\rightarrow \mathcal{N}_\beta$ with $\crit(j)<\kappa$. We will show that the critical point of any $j\colon \mathcal{N}_\alpha\rightarrow \mathcal{N}_\beta$ as above must belong to $X$ -- this will establish that $\kappa\setminus X$ is not Vop\v{e}nka in $\kappa$.

    \smallskip

    Let $j\colon \mathcal{N}_\alpha\rightarrow \mathcal{N}_\beta$ be with $\crit(j)=\delta$. By way of contradiction, suppose $$\mathfrak{M}\models\text{$``\delta$ is not $C^{(n)}_{\mathbb{P}_\kappa}$-$\mathcal{S}$-extendible''}.$$ Thus $f(\delta)=\delta+\sigma$ for certain ordinal $\sigma$. Since $j\restriction \mathcal{M}_\alpha\colon \mathcal{M}_\alpha\rightarrow\mathcal{M}_\beta$ is an elementary embedding our previous comments yield $\delta\in C\cap \gamma_\alpha$. Also, $\gamma_\alpha$ is a closure point of $f$, so $\delta+\sigma=f(\delta)<\gamma_\alpha$. Thus there is an elementary embedding $j\colon V_{\delta+\sigma}\rightarrow V_{j(\delta+\sigma)}$ (namely, the restriction of $j$) with $\crit(j)=\delta$. 
    
    \smallskip
    Let us remark a few things: 
    \begin{enumerate}
        \item Since $\delta<j(\delta)$ and $j(\delta)\in C\cap \gamma_\beta$, 
        $$\delta+\sigma=f(\delta)<j(\delta).$$
        \item $j(\mathcal{S}\cap(\delta+\sigma))=j(\mathcal{S}\cap\gamma_\alpha)\cap j(\delta+\sigma)=(\mathcal{S}\cap \gamma_\beta)\cap j(\delta+\sigma)=\mathcal{S}\cap j(\delta+\sigma);$
        \item $\langle V_{j(\delta)},\in,\mathbb{P}_\kappa \cap V_{j(\delta)}\rangle\prec_{\Sigma_n}\langle V_\kappa,\in,\mathbb{P}_\kappa\rangle$ because $j(\delta)\in C.$
    \end{enumerate}
    
   From the above we infer that $\mathfrak{M}\models``\delta$ is $(\delta+\sigma)$-$C^{(n)}_{\mathbb{P}_\kappa}$-$\mathcal{S}$-extendible''  as witnessed by $j\colon \langle V_{\delta+\sigma},\in, \mathcal{S}\cap (\delta+\sigma)\rangle\rightarrow \langle V_{j(\delta+\sigma)},\in, \mathcal{S}\cap j(\delta+\sigma)\rangle$. Nonetheless this contradicts the fact that $f(\delta)=\delta+\sigma.$ 
\end{proof}

We are now in conditions to prove the promissed theorem:
\begin{theorem}\label{VopenkaIntheEnd}
 $V[G]_\kappa\models ``\kappa$ is Vop\v{e}nka''.
\end{theorem}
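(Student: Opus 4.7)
The plan is to invoke Bagaria's \cite[Corollary~4.15]{Bag} characterization of Vop\v{e}nka cardinals: an inaccessible $\kappa$ is Vop\v{e}nka if and only if, for every $n\geq 1$, the class of $C^{(n)}$-extendible cardinals is unbounded below $\kappa$. Fixing $n\geq 1$, I will show that $V[G]_\kappa$ contains cofinally many $C^{(n)}$-extendibles. Intersecting the three $\mathcal{U}$-large sets supplied by Lemmas~\ref{indiscernibility}, \ref{Nicesplitting} and~\ref{Manyextendibles} --an intersection that remains $\mathcal{U}$-large, hence unbounded in $\kappa$-- I obtain an unbounded set of $\delta<\kappa$ that are simultaneously $C^{(n)}_{\mathbb{P}_\kappa}$-$\mathcal{S}$-extendible in $V$, admit a dense sub-iteration $\mathbb{D}^\delta\subseteq\mathbb{P}_\kappa$ definable from $\{\mathcal{S},\prec,\delta\}$ whose tail past $\delta$ is $\delta^+$-directed-closed, and have $\mathcal{S}\cap\delta$ stationary in $\delta$. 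The bulk of the work is to show that each such $\delta$ is $C^{(n)}$-extendible in $V[G]_\kappa$.

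Given $\lambda^*\in(\delta,\kappa)$, I would pick $\lambda>\lambda^*$ with $\lambda\in C^{(n)}_{\mathbb{P}_\kappa}$ so that $\delta$ is $\lambda$-$C^{(n)}_{\mathbb{P}_\kappa}$-$\mathcal{S}$-extendible in $V$, via some
$$j:\langle V_\lambda,\in,\mathcal{S}\cap\lambda\rangle\to\langle V_\theta,\in,\mathcal{S}\cap\theta\rangle,$$
with $\crit(j)=\delta$, $j(\delta)>\lambda$, and $j(\delta)\in C^{(n)}_{\mathbb{P}_\kappa}$. Since $\mathbb{P}_\kappa$ and $\mathbb{D}^\delta$ are definable from $\{\mathcal{S},\prec\}$ and $\{\mathcal{S},\prec,\delta\}$ respectively, and $\prec$ is absolute between $V_\lambda$ and $V_\theta$ by~\eqref{eq: dagger}, $j$ automatically carries $\mathbb{D}^\delta\cap V_\lambda$ onto $\mathbb{D}^\delta\cap V_\theta$, fixing $\mathbb{D}^\delta\cap V_\delta$ pointwise. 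Writing $H:=G\cap\mathbb{D}^\delta$ and $H_\lambda:=H\cap V_\lambda$, I aim to lift $j$ to $j^*:V_\lambda[H_\lambda]\to V_\theta[H_\lambda\ast K]$ for some $V_\theta[H_\lambda]$-generic $K$ for the tail of $j(\mathbb{D}^\delta)$ past $\lambda$, constructed inside $V[G]_\kappa$ and containing $j\image H_{[\delta,\lambda)}$.

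This lifting is the principal obstacle. The plan is a standard master-condition/bookkeeping construction exploiting two facts: first, $\lambda\in C^{(n)}_{\mathbb{P}_\kappa}$ gives $V_\lambda[H_\lambda]=V[G]_\lambda$ (by adequacy of $\mathbb{P}_\kappa$), so the relevant maximal antichains of the tail are reflected into a piece of the universe of rank below $\theta$; second, the tail past $\lambda$ in $j(\mathbb{D}^\delta)$ is the image of a $\delta^+$-directed-closed forcing and is therefore $j(\delta)^+$-directed-closed in $V_\theta$, so the image $j\image H_{[\delta,\lambda)}$ --of size less than $j(\delta)$-- admits a master condition anchoring the $\theta$-step enumeration. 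With $K$ produced, the lift $j^*$ is definable in $V[G]_\kappa$ and satisfies $\crit(j^*)=\delta$ and $j^*(\delta)=j(\delta)\in C^{(n)}_{\mathbb{P}_\kappa}$. Finally, Fact~\ref{fact: preserving Cns} upgrades this to $j^*(\delta)\in C^{(n)}$ in $V[G]$, witnessing $\lambda^*$-$C^{(n)}$-extendibility of $\delta$ in $V[G]_\kappa$. As $\lambda^*$ was arbitrary, $\delta$ is $C^{(n)}$-extendible there for each $n\geq 1$, and Bagaria's characterization concludes that $\kappa$ is Vop\v{e}nka in $V[G]_\kappa$.
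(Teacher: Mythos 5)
Your overall strategy coincides with the paper's: Bagaria's characterization of Vop\v{e}nka cardinals, the intersection of the three $\mathcal{U}$-large sets from Lemmas~\ref{indiscernibility}, \ref{Nicesplitting} and \ref{Manyextendibles}, and a lifting of the $C^{(n)}_{\mathbb{P}_\kappa}$-$\mathcal{S}$-extendibility embeddings through the dense directed-closed subiterations $\mathbb{D}^\delta$ via a master condition. However, the step you yourself identify as "the principal obstacle" is where your proposal breaks down. You propose to \emph{construct} a $V_\theta[H_\lambda]$-generic filter $K$ for the tail of $j(\mathbb{D}^\delta)$ inside $V[G]_\kappa$ by a "master-condition/bookkeeping construction." This cannot work as stated: the tail forcing has up to $\beth_\theta$-many maximal antichains in $V_\theta[H_\lambda]$, while the only closure available to diagonalize against them is $j(\delta)^+$-directed-closure (and between $\lambda$ and $j(\delta)$ not even that -- see the next paragraph), so an internal antichain-by-antichain construction of $K$ is impossible. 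The paper avoids this entirely by a \emph{density} argument: it shows that the set $\mathcal{D}_\lambda$ of conditions $r\in\mathbb{P}_\kappa/G_\delta$ forcing the existence of the lifted embedding is dense (by gluing an arbitrary $r$ to the master condition $r^*$ bounding $j\image(G_{[\delta,\lambda)}\cap\mathbb{D}^\delta)$), and then the target-model generic is simply the relevant segment of the \emph{actual} generic $G$ below a condition of $\mathcal{D}_\lambda\cap G$. Your argument needs to be reorganized along these lines; the master condition alone does not produce $K$, it only guarantees that whatever generic $G$ happens to provide (below that condition) contains $j\image H_{[\delta,\lambda)}$.

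Two further inaccuracies in the same step. First, $j$ does not carry $\mathbb{D}^\delta\cap V_\lambda$ onto "$\mathbb{D}^\delta\cap V_\theta$": since $\delta=\crit(j)$ is a parameter in the definition of $\mathbb{D}^\delta$, elementarity gives $j(\mathbb{D}^\delta\cap V_\lambda)=\mathbb{D}^{j(\delta)}$ as computed in $V_\theta$. Consequently the image iteration agrees with $\mathbb{P}$ (not with a directed-closed poset) on the interval $(\lambda,j(\delta)]$, and is $j(\delta)^+$-directed-closed only \emph{past} $j(\delta)$; your claim that "the tail past $\lambda$" is $j(\delta)^+$-directed-closed is false. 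This is harmless only because the supports of the conditions $j(p)$, $p\in H_{[\delta,\lambda)}$, lie in $[j(\delta),j(\lambda))$, so the master condition may be taken trivial below $j(\delta)$ -- but this needs to be said. Second, to even speak of $j(\lambda)$ and of $j(p)$ for $p$ of rank close to $\lambda$ you should take the extendibility embedding with domain $V_\mu$ for some $\mu\in A_0$ strictly above $\lambda$ (as the paper does), rather than with domain $V_\lambda$ itself.
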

\begin{proof}
 This is equivalent to saying $V[G]_\kappa\models \mathrm{VP}$. To show  this we will employ Bagaria's characterization of $\mathrm{VP}$ (inside $V[G]_\kappa$); namely, $\mathrm{VP}$ holds if and only if, for each $n<\omega$, there is a $C^{(n)}$-extendible cardinal \cite{Bag}. 
 First, recall that $\mathcal{U}$ is a normal ultrafilter and that $\mathcal{S}\in\mathcal{U}$ so that $$A_0:=\{\delta<\kappa\mid \text{$\mathcal{S}\cap \delta$ is stationary}\}\in\mathcal{U}.$$
As argued in Lemmas~\ref{Nicesplitting} and \ref{Manyextendibles}, the following sets are also $\mathcal{U}$-large:
 $$A_1:=\{\delta<\kappa\mid \text{There is $\mathbb{D}^\delta\s \mathbb{P}_\kappa$ with $(\alpha)$ and $(\beta)$ in p.\pageref{ClausesDirectedClosure}}\},$$
 $$A_2:=\{\delta<\kappa\mid \langle V_\kappa,\in, \mathbb{P}_\kappa, \mathcal{S}\rangle\models\text{$``\delta$ is $C^{(n)}_{\mathbb{P}_\kappa}$-$\mathcal{S}$-extendible''}\}.$$

 We shall argue that each $\delta\in \cap_{i<3}A_i$ remains $C^{(n)}$-extendible in $V[G]_\kappa$. As in Theorem~\ref{Aholdsintheend} we denote $W=V_\kappa$ and note that $V[G]_\kappa=W[G]$.
 
 Fix a $\delta\in \cap_{i<3}A_i$ and let $\lambda>\delta$ be such that $\one\forces_{\mathbb{P}_\kappa} W[\dot{G}]_\lambda=W_\lambda[\dot{G}_\lambda]$. A cardinal $\lambda$ with the above-described property always exists (in fact there is a proper class of them below $\kappa$) by virtue of \cite[Lemma~5.7]{BP}.

 \begin{claim}
     Working in $W[G_\delta]$, the set defined by $$\mathcal{D}_\lambda:=\{r\in \mathbb{P}_\kappa/G_\delta\mid r\forces_{\mathbb{P}_\kappa/G_\delta} \varphi(\kappa,\delta,\lambda, n)\}$$ is dense. Here $\varphi(\kappa,\delta,\lambda, n)$ denotes the  assertion
$$\text{$``\exists\theta\in\kappa\,\exists \tau\colon W[G_\delta][\dot{G}_{[\delta,\kappa)}]_\lambda\rightarrow W[G_\delta][\dot{G}_{[\delta, \kappa)}]_\theta\,\crit(\tau)=\delta\;\text{and}\; \tau(\delta)\in \dot{C}^{(n)}\setminus (\lambda+1)$''.}$$
 \end{claim}
 \begin{proof}[Proof of claim]
 Let $r\in \mathbb{P}_\kappa/G_\delta$. Since $\mathbb{P}_\kappa$ has Easton support there is $\mu\in A_0$ inaccessible above $\lambda$ such that $r\in\mathbb{P}_\mu/G_\delta$. By our assumption that $\delta$ is $C^{(n)}_{\mathbb{P}_\kappa}$-$\mathcal{S}$-extendible in  $\langle W,\in,\mathbb{P}_\kappa,\mathcal{S}\rangle$ there is an elementary embedding $$j\colon \langle W_\mu,\in,\mathcal{S}\cap \mu\rangle\rightarrow \langle W_\theta,\in,\mathcal{S}\cap \theta\rangle$$ with $\theta<\kappa$, $\crit(j)=\delta$, $j(\delta)>\mu$ and $j(\delta)\in {C}^{(n)}_{\mathbb{P}_\kappa}$. 

 \smallskip

Since the tail $\mathbb{P}_{[\mu,\kappa)}$ is $\mu^+$-closed  the trivial condition of $\mathbb{P}_\mu$ forces $$\text{$``W[\dot{G}]_\lambda=W_\lambda[\dot{G}_\lambda]$''.}$$ Also,  $\mathbb{P}_\mu$ is a definable class in $\langle W_\mu, \in, \mathcal{S}\cap \mu\rangle$,  so that 
 $$\langle W_\mu, \in, \mathcal{S}\cap \mu\rangle\models \text{$``\one\forces _{\mathbb{P}_\mu}W_\lambda[\dot{G}_\lambda]=W[\dot{G}]_\lambda$''}$$
 and by elementarity, 
  $$\langle W_\theta, \in, \mathcal{S}\cap \theta\rangle\models \text{$``\one\forces _{\mathbb{P}_\theta}W_{j(\lambda)}[\dot{G}_{j(\lambda)}]=W[\dot{G}]_{j(\lambda)}$''.}$$

Since we have chosen $\delta$ in  $A_1$ there is a dense subiteration $\mathbb{D}^\delta$ of $\mathbb{P}_\kappa$ witnessing $(\alpha)$ and $(\beta)$ above. Since $\mathcal{S}\cap \mu$ is stationary  (as $\mu\in A_0$)  one has $$(\mathbb{D}^\delta)^{\langle W_\mu,\in, \mathcal{S}\cap \mu\rangle}=(\mathbb{D}^\delta)_\mu,$$ and,  by elementarity, 
$$(\mathbb{D}^{j(\delta)})^{\langle W_\theta,\in, \mathcal{S}\cap \theta\rangle}=(\mathbb{D}^{j(\delta)})_\theta.\footnote{The reader should bear in mind that by our prelimminary choice the well-ordering $\prec$ is absolute between $V_\mu$ (resp. $V_\theta$) and $V.$}$$
 Since $\delta\in A_1$ we have that $\one\forces_{\mathbb{D}_\delta}``\dot{\mathbb{D}}_{[\delta,\lambda)}$ is $\delta^+$-directed-closed'' and thus,
$$\text{$\one\forces_{\mathbb{D}_{j(\delta)}}``\dot{\mathbb{D}}_{[j(\delta),j(\lambda))}$ is $j(\delta)^+$-directed-closed''.}$$ 

Now work in $W[G_{j(\delta)}]$. Clearly $$\{j(p)\mid p\in G_{[\delta,\lambda)}\cap (\mathbb{D}^\delta)\}$$ is a directed subset of the tail forcing $$(\mathbb{D}^{j(\delta)}_{[j(\delta),j(\lambda))})$$ of cardinality ${<}j(\delta)$ so we can let $r^*\in \mathbb{D}^{j(\delta)}_{[j(\delta),j(\lambda))}\s \mathbb{P}_{j(\lambda)}/G_{j(\delta)}$ such that $$\text{$r^*\leq j(p)$ for all $p\in G_{[\delta,\lambda)}\cap (\mathbb{D}^\delta)_{[\delta,\lambda)}.$ }$$

Since $(\mathbb{D}^\delta)_\delta$ is an Easton-supported iteration it is easy to define  a condition $$r\wedge r^*\leq_{\mathbb{P}_{j(\lambda)}/G_\delta}r, r^*$$
(for details see \cite[p.319]{BP}).
This condition serves as a master condition and thus necessarily forces the existence of an elementary embedding 
$$\tau\colon  W_\mu[G_\delta][\dot{G}_{[\delta,\lambda)}] \rightarrow W_\theta[G_\delta][\dot{G}_{[\delta, j(\lambda))}].$$
Note that the restriction of $\tau$ to $W_\lambda[G_\lambda]$ is forced to be
$$\tau\restriction W_\lambda[\dot{G}_\lambda]\colon  W_\lambda[\dot{G}_\lambda] \rightarrow W_{j(\lambda)}[\dot{G}_{j(\lambda)}],$$
but by our preliminary comments this is equivalent to saying that 
$$\tau\restriction W_\lambda[\dot{G}_\lambda]\colon  W[\dot{G}]_\lambda \rightarrow W[\dot{G}]_{j(\lambda)}.$$
 On a different note, Fact~\ref{fact: preserving Cns} shows that $\one_{\mathbb{P}_\kappa}$ forces $``j(\delta)\in C^{(n)}$''.

 Putting everything into the same canopy one concludes that 
 $$W[G_\delta]\models r\wedge r^*\forces_{\mathbb{P}_\kappa/G_\delta}\varphi(\kappa,\delta,\lambda, n).$$
 This proves that, indeed, $\mathcal{D}_\lambda$ is a dense set in $W[G_\delta].$
 \end{proof}
Invoking the claim we let $r\in G_{[\delta,\kappa)}\cap \mathcal{D}_\lambda$, which yields an elementary embedding $j\colon W[G]_\lambda\rightarrow W[G]_\theta$ witnessing that $\delta$ is $\lambda$-$C^{(n)}$-extendible in $W[G]=V[G]_\kappa$. Thus we conclude that $\delta$ is $C^{(n)}$-extendible in $V[G]_\kappa$.
\end{proof}

Putting together Theorems~\ref{Aholdsintheend} and \ref{VopenkaIntheEnd} we infer that $V[G]_\kappa$ is a model of ZFC wittnessing the configuration claimed in Theorem~\ref{ConsistencyofA}.

\begin{remark}
Recall that  $``\mathrm{Ord}$ is Mahlo'' is the scheme asserting that for each first-order formula $\varphi(x,y)$, if for some parameter $a$, $\{\alpha\in\ord\mid \varphi(\alpha,a)\}$ is a closed unbounded class of ordinals then there is a regular cardinal $\theta$ for which $\varphi(\theta,a)$ holds. Note that $``\mathrm{Ord}$ is Mahlo'' holds in $V[G]_\kappa$. In fact, much more is true as Vop{e}nka's Principle holds in this model. Therefore in $V[G]_\kappa$ we obtain the stronger version of axiom $\mathcal{A}$ where a proper class of inaccessible cardinals is replaced by $``\mathrm{Ord}$ is Mahlo''.
\end{remark}

\subsection{$\mathcal{A}$ and  $I_0$ cardinals}\label{AandI0}
In this section we show that $\mathcal{A}$ is compatible with the existence of $I_0$-cardinals (see p.\pageref{DefiningI0} in \S\ref{Preliminaries} for definitions). Our departing hypothesis is that $\kappa$ carries a normal measure $\mathcal{U}$ such that, for each $X\s \kappa$, $$\{\delta<\kappa\mid \exists\lambda\, I_0(\delta,\lambda, X\cap \lambda)\}\in \mathcal{U}$$
and that $\prec$ is a well-ordering of $V$ witnessing equation \eqref{eq: dagger} in p.\pageref{eq: dagger}. Denote this assumption  $\oplus_\kappa$. Woodin has informed us that $\oplus_\kappa$ follows from the stronger hypothesis 
 $I_0^\sharp(\kappa,\lambda)$ employing Cramer's results on inverse reflection \cite{Cramer}. Recall that $I^\sharp_0(\kappa,\lambda)$ is a shorthand for ``There is a an elementary embedding with $j\colon L(V_{\lambda+1}^\sharp)\rightarrow L(V_{\lambda+1}^\sharp)$ with $\crit(j)=\kappa<\lambda$''. Another argument yielding $\oplus_\kappa$ can be found in \cite[Theorem~137]{WoodinSuitableII}. 



\begin{theorem}\label{MainTheoremwithStrongerlargecardinals}
  Assume that $\oplus_\kappa$ holds. Then there is a model for the theory $$\mathrm{ZFC}+\mathcal{A}+\exists\lambda\,I_0(\lambda).$$ 
\end{theorem}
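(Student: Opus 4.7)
The plan is to run the same iteration $\mathbb{P}_\kappa$ designed in the proof of Theorem~\ref{ConsistencyofA}, starting from the ground model furnished by $\oplus_\kappa$ (after the preliminary McAloon coding that secures the absolute well-ordering $\prec$ witnessing \eqref{eq: dagger}). Since $\oplus_\kappa$ gives a normal measure $\mathcal{U}$ on $\kappa$, Lemma~\ref{indiscernibility} produces the indiscernibility set $\mathcal{S}\in\mathcal{U}$, and the entire analysis of $\mathbb{P}_\kappa$ carried out in Lemmas~\ref{lemma: C1IH}--\ref{Closure}, Lemma~\ref{lemmacoding} and Theorem~\ref{Aholdsintheend} uses only measurability of $\kappa$ together with indiscernibility of $\mathcal{S}$. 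Consequently, for any $V$-generic $G$ for $\mathrm{Add}(\omega,1)\ast\dot{\mathbb{P}}_\kappa$, the inner model $V[G]_\kappa$ will satisfy $\mathrm{ZFC}+\mathcal{A}$. The remaining (and truly novel) task is to produce $\lambda<\kappa$ with $V[G]_\kappa\models I_0(\lambda)$.

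To produce such a $\lambda$, I would code $\mathcal{S}$ and the restrictions $\prec\restriction V_\delta$ (which are absolute by \eqref{eq: dagger}) into a single subset $X\subseteq\kappa$, and invoke $\oplus_\kappa$ applied to $X$ together with Lemma~\ref{indiscernibility}. This yields $\delta<\kappa$ such that $\mathcal{S}\cap\delta$ is stationary and such that, for some $\lambda\in(\delta,\kappa)$, there is $j\colon L(V_{\lambda+1})\rightarrow L(V_{\lambda+1})$ with $\crit(j)=\delta$ and $j(X\cap\lambda)=X\cap\lambda$; in particular, $j$ fixes $\mathcal{S}\cap\lambda$ and $\prec\cap V_\lambda$. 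Since $\mathbb{P}_\lambda$ is definable from exactly these two parameters, elementarity of $j$ forces $j(\mathbb{P}_{\delta_n})=\mathbb{P}_{\delta_{n+1}}$ for each $n<\omega$, where $\langle\delta_n\mid n<\omega\rangle$ is the critical sequence of $j$ with $\lambda=\sup_n\delta_n$; hence $\mathbb{P}_\lambda$ itself is $j$-invariant.

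Next I would show that $j$ lifts to an elementary embedding $j^*\colon L(V_{\lambda+1}[G_\lambda])\rightarrow L(V_{\lambda+1}[G_\lambda])$ inside $V[G_\lambda]$, witnessing $I_0(\delta,\lambda)$. The idea is to perform the lift inductively at each $\delta_n$: the key design feature of $\mathbb{P}_\kappa$ -- namely that at every inaccessible stage $\delta_n$ the factor $\mathbb{T}^{\mathrm{NS}}_{\delta_n}$ (or $\mathbb{T}^{\mathrm{NS}^+}_{\delta_n}$, see Definition~\ref{DefinitionMainIteration}) projects to $j_{E^*}(\mathbb{P}_{\crit(E^*)})/G_{\delta_n}$ for every relevant extender $E$ -- anticipates exactly the generic filter required to lift $j\restriction V[G_{\delta_n}]$ to $V[G_{\delta_{n+1}}]$, in the manner exploited in Lemma~\ref{endgame}. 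Chaining these lifts through $n<\omega$ and passing to the direct limit yields the desired $j^*$. Finally, since the tail $\mathbb{P}_\kappa/G_\lambda$ is an Easton iteration of $\geq\lambda$-closed factors, it is $\lambda^+$-closed in $V[G_\lambda]$ and adds no new subsets of $V_{\lambda+1}$; therefore $L(V_{\lambda+1}[G_\lambda])^{V[G_\lambda]}=L(V_{\lambda+1}[G_\lambda])^{V[G]_\kappa}$, and $j^*$ persists to $V[G]_\kappa$ witnessing $I_0(\lambda)$ there.

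The main obstacle will be the $\omega$-stage lift of $j$ through $\mathbb{P}_\lambda$. The iteration is very far from $\delta$-closed below $\delta$ (the Kurepa-type factors $\mathbb{K}_\varepsilon$ kill closure), so traditional diagonal master-condition arguments are unavailable. Instead, the lift must exploit the self-similarity of $\mathbb{P}_\kappa$ under $j$ -- the fact that $j$ is merely shifting the already-anticipated tail factors $\mathbb{T}^{\mathrm{NS}(^+)}_{\delta_n}$ onto themselves -- so that the generic realizing $j``G_{\delta_n}\subseteq\mathbb{P}_{\delta_{n+1}}$ at each step is readable off $G_\lambda$ itself. Verifying the $V$-amenability of each $j\restriction V_{\delta_n}$ and the sufficient definability of the limit embedding $j^*$ along the critical sequence (in the spirit of Cramer's inverse reflection techniques \cite{Cramer} and the Dimonte--Friedman analysis \cite{DimonteFriedman}) constitutes the technical heart of the argument.
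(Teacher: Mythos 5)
Your setup is right (run the same $\mathbb{P}_\kappa$ over the $\oplus_\kappa$ ground model, use $\oplus_\kappa$ together with Lemma~\ref{indiscernibility} to find $\delta$ with $\mathcal{S}\cap\delta$ stationary and $I_0(\delta,\lambda,\mathcal{S}\cap\lambda)$, note that $j$ then fixes $\mathbb{P}_\lambda$, and absorb the tail $\mathbb{P}_\kappa/G_\lambda$ by closure), and you correctly locate the obstacle in the $\omega$-stage lift through $\mathbb{P}_\lambda$. But the mechanism you propose for overcoming it does not work, and you are missing the idea the paper actually uses. The anticipation built into $\mathbb{T}^{\mathrm{NS}}_{\varepsilon}$ and $\mathbb{T}^{\mathrm{NS}^+}_{\varepsilon}$ only provides generics for the quotients $j_{E^*}(\mathbb{P}_{\crit(E^*)})/G_\varepsilon$ where $E^*$ ranges over the \emph{set-sized superstrong extenders} in $\mathscr{E}_{\varepsilon,\mathcal{S}}$; the $I_0$ embedding $j\colon L(V_{\lambda+1})\to L(V_{\lambda+1})$ is not of this form and its restrictions are not among the anticipated tails, so there is no ``self-similarity'' that lets you read the required $M$-generics off $G_\lambda$. (Indeed, the paper explicitly does \emph{not} obtain $I_0(\delta,\lambda)$ in every generic extension, only in one containing a specially chosen condition --- precisely because the lift cannot be read off an arbitrary $G_\lambda$.) Your invocation of Cramer's inverse reflection is also misplaced: in the paper it only serves to derive the hypothesis $\oplus_\kappa$ from $I_0^\sharp$, not to perform any lift.

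The idea you are missing is that one should pass to a \emph{dense subiteration} $\mathbb{D}_\lambda=\mathbb{D}(\lambda,\mathcal{S}\cap\lambda,\langle\delta_n\mid n<\omega\rangle)$ of $\mathbb{P}_\lambda$, built from the dense $|\sigma|^+$-directed-closed subforcings $\dot{\mathbb{D}}_{\varepsilon,\sigma}$ of Claim~\ref{ClaimDefinableDirected} (at non-stationary stages $\varepsilon\in(\delta_n,\delta_{n+1})$ one uses $\dot{\mathbb{D}}_{\varepsilon,\delta_n}$; the point is that $\{p\in\mathbb{K}_\varepsilon\mid \mathrm{ht}(t^p)>\delta_n\}$ \emph{is} $\delta_n^+$-directed-closed even though $\mathbb{K}_\varepsilon$ is not $\varepsilon$-directed-closed). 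This restores exactly the closure needed for a ``traditional'' master-condition argument, which you declared unavailable: since $j(\mathbb{D}_\lambda)$ agrees with $\mathbb{D}_\lambda$ beyond $\delta_1$ and each block $j(\mathbb{D}_\lambda)\restriction(\delta_n,\delta_{n+1})$ is forced to be $\delta_n^+$-directed-closed while $j``\mathbb{D}_\lambda$ restricted there has size $\le\delta_n$ (this uses the smallness estimate $|\mathbb{D}_\gamma|<\delta_n$ for $\gamma<\delta_n$, proved by iterating $j$), one assembles a single condition $q\in j(\mathbb{D}_\lambda)$ forcing $j``(\dot G\cap\mathbb{D}_\lambda)\subseteq\dot G$. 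Choosing a generic $G$ with $q\in G$ and using density of $\mathbb{D}_\lambda$ and $j(\mathbb{D}_\lambda)$ in $\mathbb{P}_\lambda$ then yields the lift to $L(V[G]_{\lambda+1})$. Without this detour through $\mathbb{D}_\lambda$ your inductive lift has no way past the Kurepa factors, so as written the proposal has a genuine gap at its technical heart.
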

Let $\mathcal{U}$ be a normal measure witnessing $\oplus_\kappa$. Arguing as in Lemma~\ref{indiscernibility} we find $\mathcal{S}\in\mathcal{U}$ consisting of indiscernibles such that $\{\delta<\kappa\mid \text{$\mathcal{S}\cap \delta$ is stationary}\}$ is $\mathcal{U}$-large. Thus, by virtue of $\oplus_\kappa$, the set  of all $\delta<\kappa$ such that
\begin{enumerate}
    \item[$(\alpha)$] $\mathcal{S}\cap \delta$ is stationary,
    \item[$(\beta)$] and $I_0(\delta,\lambda,\mathcal{S}\cap \lambda)$ holds for some $\lambda$\label{ClausesReflectingI0}
\end{enumerate}
is $\mathcal{U}$-large as well. Let $\delta$ be witnessing $(\alpha)$ and $(\beta)$ above and let us show that it witnesses $I_0(\delta,\lambda)$ in certain generic extension by $\mathrm{Add}(\omega,1)\ast \dot{\mathbb{P}}_\kappa$. To simplify notations our ground model $V$ will be a generic extension by Cohen forcing $\mathrm{Add}(\omega,1)$. The preservation of $I_0(\delta,\lambda)$ (as in Theorem~\ref{VopenkaIntheEnd}) requires passing to a dense subiteration of $\mathbb{P}_\kappa$ which is nicely behaved.

\smallskip

Let $j\colon L(V_{\lambda+1})\rightarrow L(V_{\lambda+1})$ be a witness for $I_0(\delta,\lambda, \mathcal{S}\cap \lambda)$ and denote by $\langle \delta_n\mid n<\omega\rangle$ the corresponding critical sequence. Since $\mathcal{S}\cap\delta$ is stationary and $j(\mathcal{S}\cap\lambda)=\mathcal{S}\cap\lambda$ it follows that
$\mathcal{S}\cap\delta_n$ is stationary for all $n<\omega.$

Define an Easton-supported iteration $\mathbb{D}_\kappa:=\mathbb{D}(\kappa, \mathcal{S},\langle \delta_n\mid n<\omega\rangle)$ as follows. Suppose that $\mathbb{D}_\varepsilon$ has been defined and let $G_\varepsilon\s \mathbb{D}_\varepsilon$ a $V$-generic. 
If $V[G_\varepsilon]\models``\varepsilon$ is not inaccessible'', $(\dot{\mathbb{Q}}^*_\varepsilon)_{G_\varepsilon}$ is declared to be the trivial poset. Otherwise we define it according to the following casuistics:

\begin{definition}\label{DefinitionDenseIteration}$(\dot{\mathbb{Q}}^*_\varepsilon)_{G_\varepsilon}$ is defined according to the following casuistic:
\smallskip

{\textbf{Case $(\aleph)$:}} If $\mathcal{S}\cap\varepsilon$ is non-stationary in $V$ then 
$$(\dot{\mathbb{Q}}^*_\varepsilon)_{G_\varepsilon}:=\begin{cases}
   (\dot{\mathbb{Q}}_\varepsilon)_{G_\varepsilon}, & \text{if $\varepsilon<\delta_0$;}\\
    (\dot{\mathbb{D}}_{\varepsilon,\delta_n})_{G_\varepsilon}, & \text{if $\varepsilon>\delta_0$ and $n=\max\{m<\omega\mid \delta_m<\varepsilon\}$.} 
\end{cases}$$

\smallskip

{\textbf{Case $(\beth)$:}}\label{CaseBeth} If $\mathcal{S}\cap\varepsilon$ is stationary in $V$ then
\label{Casealeph}
$(\dot{\mathbb{Q}}^*_\varepsilon)_{G_\varepsilon}:=(\dot{\mathbb{Q}}_\varepsilon)_{G_\varepsilon}.$
\end{definition}

Here  $(\dot{\mathbb{Q}}_\varepsilon)_{G_\varepsilon}$ is as in Definition~\ref{DefinitionMainIteration} and  $ (\dot{\mathbb{D}}_{\varepsilon,\delta_n})_{G_\varepsilon}$  is as in Claim~\ref{ClaimDefinableDirected}.

Finally, define
$$\mathbb{D}_\kappa:=\varinjlim\langle \mathbb{D}_\varepsilon;\dot{\mathbb{Q}}^*_\varepsilon\mid \varepsilon<\kappa\rangle.$$

A few datapoints in regards to $\mathbb{D}_\kappa$:
\begin{enumerate}
    \item $\mathbb{D}_\kappa$ (resp. $\mathbb{D}_\lambda$) is a dense subiteration of $\mathbb{P}_\kappa$ (resp. $\mathbb{P}_\lambda$);
    \item $\mathbb{D}_\lambda$ is definable in $L(V_{\lambda+1})$ via $\lambda, \mathcal{S}\cap \lambda,\prec$ and $\langle \delta_n\mid n<\omega\rangle$;
    \item \textbf{Case~($\aleph$)} above is well-posed as it precludes $\varepsilon$ be  one of the $\delta_n$'s.
    \item For each $n<\omega$, $\one\forces_{\mathbb{D}_{\delta_n}}\dot{\mathbb{D}}_{[\delta_n,\delta_{n+1})}$ is $\delta_n^+$-directed-closed.\label{DirectedClosureOfDenseIteration}
\end{enumerate}
We remind our readers that  $\mathbb{P}_\kappa$ (resp. $\mathbb{P}_\lambda$) was the ($\lambda$-stage of) the main iteration of \S\ref{SectionForcingW}; namely, the iteration given in Definition~\ref{DefinitionMainIteration} using $\mathcal{S}\cap\lambda.$
\begin{lemma}\label{LemmapreservingI0}
   $I_0(\delta,\lambda)$ holds in certain generic extension by $\mathbb{P}_\kappa$. 
\end{lemma}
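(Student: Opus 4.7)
The plan is to lift the embedding $j\colon L(V_{\lambda+1}) \to L(V_{\lambda+1})$ to an endomorphism of $L(V_{\lambda+1})^{V[G]}$ with critical point $\delta$, for a suitable $V$-generic $G \subseteq \mathbb{P}_\kappa$, following the master-condition strategy used in \cite{DimonteFriedman} to preserve $I_0$-embeddings. The dense subiteration $\mathbb{D}_\lambda$ has been tailored precisely so that the lift can be carried out stage by stage through the critical sequence. Since $\mathbb{D}_\kappa$ is dense in $\mathbb{P}_\kappa$, it suffices to work with a $V$-generic filter for $\mathbb{D}_\kappa$.

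The first observation is that $\mathbb{D}_\lambda$ is definable in $L(V_{\lambda+1})$ from the parameters $\lambda$, $\mathcal{S}\cap\lambda$, $\prec\restriction V_\lambda$ and the critical sequence $\langle \delta_n\mid n<\omega\rangle$. Since $j$ fixes the first three parameters and sends $\langle \delta_n\rangle$ to its shift $\langle \delta_{n+1}\rangle$, the image $j(\mathbb{D}_\lambda)$ is the iteration defined using the shifted sequence. Inspecting Definition~\ref{DefinitionDenseIteration} shows that $\mathbb{D}_\lambda$ and $j(\mathbb{D}_\lambda)$ coincide on every interval $[\delta_n,\delta_{n+1})$ with $n \geq 1$; they differ only on $[\delta_0,\delta_1)$, where $\mathbb{D}_\lambda$ uses the $\delta_0^+$-directed-closed dense subforcing $\dot{\mathbb{D}}_{\varepsilon,\delta_0}$ while $j(\mathbb{D}_\lambda)$ uses the full $\dot{\mathbb{Q}}_\varepsilon$. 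In particular, $\mathbb{D}_\lambda$ is a dense suborder of $j(\mathbb{D}_\lambda)$.

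Fix $G_\lambda \subseteq \mathbb{D}_\lambda$ a $V$-generic filter. Inside $V[G_\lambda]$ we construct a $V$-generic filter $H \subseteq j(\mathbb{D}_\lambda)$ containing $j``G_\lambda$, by building initial segments $H_{\delta_n}$ recursively. Set $H_{\delta_0}:=G_{\delta_0}$; this satisfies $j``G_{\delta_0}\subseteq H_{\delta_0}$ since $j\restriction V_{\delta_0}=\mathrm{id}$. Given $H_{\delta_{n+1}}$ a $V$-generic filter for $j(\mathbb{D}_\lambda)_{\delta_{n+1}}$ containing $j``G_{\delta_n}$, observe that $j``G_{[\delta_n,\delta_{n+1})}$ is a directed subset of $j(\mathbb{D}_\lambda)_{[\delta_{n+1},\delta_{n+2})}$ of cardinality $\leq\delta_{n+1}$, and this tail is $\delta_{n+1}^+$-directed-closed over $V[H_{\delta_{n+1}}]$ (by applying $j$ to Clause~\ref{DirectedClosureOfDenseIteration}). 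Hence a master condition below $j``G_{[\delta_n,\delta_{n+1})}$ exists, and a standard diagonalization inside $V[G_\lambda]$ against the $V$-dense subsets of the tail produces the next segment $H_{[\delta_{n+1},\delta_{n+2})}$. The resulting $H=\bigcup_n H_{\delta_n}$ induces a lift $\tilde j\colon V_\lambda[G_\lambda]\to V_\lambda[H]$, definable in $V[G_\lambda]$ via $\tilde j(\dot x^{G_\lambda}):=j(\dot x)^H$. The main obstacle is the last step, namely extending $\tilde j$ to an elementary embedding of $L(V_{\lambda+1})^{V[G_\lambda]}$ into itself; this is handled via the $I_0$-preservation technique of \cite{DimonteFriedman}, using that $\tilde j$ is determined by its action on $V_\lambda[G_\lambda]$ through the $L$-construction and that $j$'s action on the relevant $L(V_{\lambda+1})$-parameters is preserved by the lift. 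The resulting embedding witnesses $I_0(\delta,\lambda)$ in $V[G_\lambda]$, and hence in the extension $V[G]$.
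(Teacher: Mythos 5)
Your setup is right: $j(\mathbb{D}_\lambda)$ is the iteration built from the shifted critical sequence, it agrees with $\mathbb{D}_\lambda$ past $\delta_1$, and the master conditions at each $\delta_n$ and on each interval $(\delta_n,\delta_{n+1})$ exist for exactly the reasons you give (smallness of $\mathbb{D}_{\delta_n+1}$ plus $\delta_n^+$-directed closure of the tails). But the way you deploy them contains a genuine gap. You fix a generic $G_\lambda\s\mathbb{D}_\lambda$ and then try to \emph{construct} inside $V[G_\lambda]$ a $V$-generic filter $H\s j(\mathbb{D}_\lambda)$ above $j``G_\lambda$ by ``a standard diagonalization against the $V$-dense subsets of the tail.'' This step fails: the tail $j(\mathbb{D}_\lambda)_{[\delta_{n+1},\delta_{n+2})}$ is only $\delta_{n+1}^+$-directed-closed, whereas $V$ contains at least $2^{|j(\mathbb{D}_\lambda)_{\delta_{n+2}}|}$-many dense subsets of it, so they cannot all be met along a descending sequence of length $\delta_{n+1}^+$. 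Diagonalizations of this kind only succeed over a \emph{small} inner model (e.g.\ an ultrapower whose dense sets can be enumerated in the appropriate length, as in the proof of Lemma~\ref{lemma: McAloonOK}); here the relevant model is $V$ itself. Moreover, even granting $H$, you would obtain an embedding of $L(V_{\lambda+1})^{V[G_\lambda]}$ into $L$ of the $V_{\lambda+1}$ computed from $H$, and $I_0$ requires the target to be the \emph{same} structure $L(V[G]_{\lambda+1})$; nothing in your construction guarantees that $H$ and $G_\lambda$ generate the same extension.

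The correct route sidesteps both problems by never constructing a second generic. Since $\mathbb{D}_\lambda$ and $j(\mathbb{D}_\lambda)$ are each \emph{dense} in $\mathbb{P}_\lambda$, a single $V$-generic $G_\lambda\s\mathbb{P}_\lambda$ restricts to generics $G_\lambda\cap\mathbb{D}_\lambda$ and $G_\lambda\cap j(\mathbb{D}_\lambda)$ for the two posets, and both restrictions generate the same model $V[G_\lambda]$ (hence the same $L(V[G_\lambda]_{\lambda+1})$). One then assembles your interval-by-interval master conditions into a single condition $q\in j(\mathbb{D}_\lambda)$ forcing ``$p\in\dot G\cap\mathbb{D}_\lambda\Rightarrow j(p)\in\dot G$,'' and chooses the generic $G\s\mathbb{P}_\kappa$ with $q\in G$. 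The lifting criterion is then satisfied outright, $j$ lifts to $j\colon L(V_{\lambda+1})[G_\lambda\cap\mathbb{D}_\lambda]\rightarrow L(V_{\lambda+1})[G_\lambda\cap j(\mathbb{D}_\lambda)]$, both sides equal $L(V[G]_{\lambda+1})$ (using $\aleph_1$-closure of $\mathbb{P}_\lambda$ and the high closure of $\mathbb{P}_{(\lambda,\kappa)}$), and no genericity needs to be manufactured by hand. Note also that this only yields $I_0(\delta,\lambda)$ in \emph{some} generic extension --- the one containing $q$ --- which is exactly what the lemma claims.
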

\begin{proof}
First note that $j(\mathbb{D}_\lambda)=\mathbb{D}(\lambda, \mathcal{S}\cap\lambda,\langle \delta_n\mid n\geq 1\rangle)$. That is, $j(\mathbb{D}_\lambda)$ is an iteration which agrees with $\mathbb{P}_\lambda$ up to $(\delta_1+1)$ and beyond that it agrees with $\mathbb{D}_\lambda$. Clearly, $\mathbb{D}_\lambda\s j(\mathbb{D}_\lambda)$ and both  are dense subiterations of $\mathbb{P}_\lambda$.
\begin{claim}\label{TheDsaresmall}
    For each $n<\omega$, $|\mathbb{D}_\gamma|<\delta_n$ for all $\gamma<\delta_n$.
\end{claim}
\begin{proof}[Proof of claim]
    Suppose towards a contradiction that $|\mathbb{D}_{{\gamma_*}}|\geq \delta_0$ for some ${\gamma_*}<\delta_0$. By elementarity and since $j({\gamma_*})={\gamma_*}$, 
$$|j(\mathbb{D}_{\gamma_*})|=|j(\mathbb{D}_\lambda)_{j({\gamma_*})}|=|j(\mathbb{D}_\lambda)_{\gamma_*}|=|\mathbb{P}_{\gamma_*}|\geq \delta_1.$$
Since $j(\mathbb{P}_{\gamma_*})=\mathbb{P}_{\gamma_*}$ (because $\mathbb{P}_\lambda$ is definable via $\mathcal{S}\cap\lambda$) one can repeat this $\omega$-many times and conclude that $|\mathbb{P}_{\gamma_*}|\geq \lambda$, which is certainly false.

\smallskip

Fix an arbitrary $1\leq n<\omega$ and let us show that $|\mathbb{D}_\gamma|<\delta_{n}$ for all $\gamma<\delta_{n}$. By elementarity and our base case, $|j^{n}(\mathbb{D}_\lambda)_\gamma|<\delta_{n}$ for all $\gamma<\delta_{n}$. 

For each $\gamma<\delta_{n}$ we have  $$j^{n}(\mathbb{D}_\lambda)_\gamma=\mathbb{D}(\lambda,\mathcal{S}\cap\lambda, \langle \delta_{m}\mid m\geq n\rangle)_\gamma=\mathbb{P}_\gamma\supseteq \mathbb{D}_\gamma.$$
Therefore clearly $|\mathbb{D}_\gamma|<\delta_{n}$ for all $\gamma<\delta_{n}$. This completes the claim.
\end{proof}
The next is the master condition argument needed to lift our embedding:
\begin{claim}\label{ClaimLifting}
    There is $q\in j(\mathbb{D}_\lambda)$ with $q\forces_{j(\mathbb{D}_\lambda)}``p\in\dot{G}\cap\mathbb{D}_\lambda\,\Rightarrow\, j(p)\in\dot{G}$''.
\end{claim}
\begin{proof}[Proof of claim]
    The construction of $q$ is divided in three cases. 

\smallskip

$\br_{(\mathrm{I})}$ Let $q\restriction\delta_1$ be the trivial condition in $j(\mathbb{D}_\lambda)_{\delta_1}$.

\smallskip

$\br_{(\mathrm{II})}$ For the construction of the $j(\mathbb{D}_\lambda)_{\delta_n}$-name $q(\delta_n)$ notice that
\begin{enumerate}
    \item $\one\forces_{j(\mathbb{D}_\lambda)_{\delta_n}}\{j(p)(\delta_n)\mid p\in \dot{G}\cap\mathbb{D}_\lambda\}$ is a directed subset of  size ${<\delta_n}$, 
    \item and $\one\forces_{j(\mathbb{D}_\lambda)_{\delta_n}}\dot{\mathbb{Q}}^*_{\delta_n}$ is $\delta_n^+$-directed-closed.
\end{enumerate}

For Clause~(1): The trivial condition of $j(\mathbb{D}_\lambda)_{\delta_n}$ forces  $$\{j(p)(\delta_n)\mid p\in \dot{G}\cap \mathbb{D}_\lambda\}\s\{j(p(\delta_{n-1}))\mid (p\restriction \delta_{n-1}+1)\in \dot{G}\cap\mathbb{D}_{\delta_{n-1}+1}\}$$
and by the previous claim $|\mathbb{D}_{\delta_{n-1}+1}|<\delta_n$.

\smallskip

For Clause~(2): 
Since $\mathcal{S}\cap\delta_n$ is stationary  when defining $\mathbb{Q}^*_{\delta_n}$ we fall in \textbf{Case $(\beth)$} of Definition~\ref{DefinitionDenseIteration}, so the poset is $\delta_n^+$-directed-closed.

Finally, combine (1) and (2)  to produce a $j(\mathbb{D}_\lambda)_{\delta_n}$-name $q(\delta_n)$ such that $$\one\forces_{\mathbb{P}_{\delta_n}}\forall p\in \dot{G}\cap\mathbb{D}_\lambda\, (q(\delta_n)\leq j(p)(\delta_n)).$$

$\br_{(\mathrm{III})}$ For the construction of the $j(\mathbb{D}_\lambda)_{\delta_n+1}$-name $q\restriction (\delta_n,\delta_{n+1})$ one argues similarly. There are two key observations that one has to make; namely,
\begin{enumerate}
    \item $\one\forces_{j(\mathbb{D}_\lambda)_{\delta_n+1}}|(j``\mathbb{D}_\lambda)\restriction (\delta_n,\delta_{n+1})|\leq\delta_n$;
    \item $\one \forces_{j(\mathbb{D}_\lambda)_{\delta_n+1}}j(\mathbb{D}_\lambda)\restriction(\delta_n,\delta_{n+1})$ is $\delta_n^+$-directed-closed.
\end{enumerate}
Once the above is established it is easy to find a master condition; to wit, a $j(\mathbb{D}_\lambda)_{\delta_n+1}$-name $q\restriction (\delta_n,\delta_{n+1})$ for a condition in $j(\mathbb{D}_\lambda)\restriction(\delta_n,\delta_{n+1})$ such that $$\one\forces_{j(\mathbb{D}_\lambda)_{\delta_n+1}} \forall p\in \dot{G}\cap\mathbb{D}_\lambda\, (q\restriction (\delta_n,\delta_{n+1})\leq j(p)\restriction (\delta_n,\delta_{n+1})).$$
By design of  $j(\mathbb{D}_\lambda)$  the tail forcing $j(\mathbb{D}_\lambda)\restriction (\delta_n,\delta_{n+1})$ is $\delta_n^+$-directed-closed (see Clause~(4) in p.\pageref{DirectedClosureOfDenseIteration}). For this we have crucially used that $n\geq 1$ as then 
$$\one\forces_{j(\mathbb{D}_\lambda)_{\delta_n+1}}j(\mathbb{D}_\lambda)\restriction (\delta_n,\delta_{n+1})=\mathbb{D}_\lambda\restriction (\delta_n,\delta_{n+1}).$$


\smallskip

Suppose towards a contradiction that (1) is false and let $H\s j(\mathbb{D}_\lambda)_{\delta_n+1}$ a generic filter such that in $L(V_{\lambda+1})[H]$ there is an injective sequence $$\langle j(p_\alpha)\restriction(\delta_n,\delta_{n+1})\mid \alpha<\delta_n^+\rangle.$$
Since Easton-supports are taken at stage $\delta_n$, there is $\sigma_\alpha<\delta_n$ such that $\supp(p_\alpha\restriction\delta_n)\s \sigma_\alpha$. Working in $L(V_{\lambda+1})$ let $I\s \delta_n^+$ be with $|I|=\delta_n^+$ such that $\alpha\mapsto\sigma_\alpha$ is constant, say with value $\sigma_*$. Thus, in $L(V_{\lambda+1})[H]$, 
$$\langle j(p_\alpha)\restriction j(\sigma_*)\mid \alpha\in I\rangle$$
is still injective. By elementarity, this implies that
$$\langle p_\alpha\restriction \sigma_*\mid \alpha\in I\rangle$$
is also injective and thus $|\mathbb{D}_{\sigma_*}|\geq \delta_n^+$. Note that this latter assertion is precluded by Claim~\ref{TheDsaresmall}, so we obtain the desired contradiction.

\medskip

Let us check that $q$ forces $``p\in \dot{G}\cap\mathbb{D}_\lambda\Rightarrow j(p)\in \dot{G}$''. Let $q\in G$ and $p\in G\cap \mathbb{D}_\lambda$. Then $j(p)\restriction\delta_1\geq   p\restriction \delta_1\in G_{\delta_1}$. By construction, for each $n\geq 1$, $q\restriction [\delta_n,\delta_{n+1}]\leq j(p)\restriction [\delta_n,\delta_{n+1}]$, hence the latter belongs to $G_{[\delta_n,\delta_{n+1}]}$.
\end{proof}
We are now in conditions to lift $j\colon L(V_{\lambda+1})\rightarrow L(V_{\lambda+1})$. Let $G\s\mathbb{P}_\kappa$ be a $V$-generic with $q\in G$ (recall that $\mathbb{P}_\kappa$ is the iteration of \S\ref{SectionForcingW}). Since both $\mathbb{D}_\lambda$ and $j(\mathbb{D}_\lambda)$ are dense in $\mathbb{P}_\lambda$ we have that $G_\lambda\cap \mathbb{D}_\lambda$ and $G_\lambda\cap j(\mathbb{D}_\lambda)$ are $L(V_{\lambda+1})$-generics for the corresponding posets. 

\smallskip

By the previous claim it follows that $j$ lifts to
$$j\colon L(V_{\lambda+1})[G_\lambda\cap \mathbb{D}_\lambda]\rightarrow L(V_{\lambda+1})[G_\lambda\cap j(\mathbb{D}_\lambda)].$$
By density of $\mathbb{D}_\lambda$ in $\mathbb{P}_\lambda$, $$L(V_{\lambda+1})[G_\lambda\cap \mathbb{D}_\lambda]=L(V_{\lambda+1})[G_\lambda]$$ and by $\aleph_1$-closure of $\mathbb{P}_\lambda$,
$$L(V_{\lambda+1})[G_\lambda]=L(V[G_\lambda]_{\lambda+1}).$$
Similar comments apply to $L(V_{\lambda+1})[G_\lambda\cap j(\mathbb{D}_\lambda)]$. Thereby, $j$ lifts to 
$$j\colon L(V[G_\lambda]_{\lambda+1})\rightarrow L(V[G_\lambda]_{\lambda+1}).$$
Since $\lambda$ is singular, the iteration $\mathbb{P}_\kappa$ factors as a three-step iteration $$\mathbb{P}_\lambda\ast \{\one\}\ast \mathbb{P}_{(\lambda,\kappa)}.$$ Here the latter forcing is forced to be more than $(2^{\beth_\lambda})^+_{V[G_\lambda]}$-closed so that  $V[G]_{\lambda+1}=V[G_{\lambda}]_{\lambda+1}$. All in all this finally yields
\begin{equation*}
    j\colon L(V[G]_{\lambda+1})\rightarrow L(V[G]_{\lambda+1}).\qedhere
\end{equation*}
\end{proof}
Combining Lemma~\ref{LemmapreservingI0} with Theorem~\ref{Aholdsintheend} we get  Theorem~\ref{MainTheoremwithStrongerlargecardinals}.

\begin{remark}
    We do not know if the above argument can be adapted to get $\exists\lambda\, I_0(\delta,\lambda)$ for all $\delta$ witnessing Clauses~$(\alpha)$ and $(\beta)$ in p.\pageref{ClausesReflectingI0}. The issue is that for each such $\delta$ we have to choose a generic $G\s \mathbb{P}_\kappa$ which contains a condition $q$ witnessing  Claim~\ref{ClaimLifting} -- this condition depends upon $\delta$. If Claim~\ref{ClaimLifting} were true for a dense set of $q$'s then we would be able to prove that for all $\delta$'s witnessing $(\alpha)$ and $(\beta)$ $\exists\lambda\, I_0(\delta,\lambda)$ holds: Indeed, let $G\s \mathbb{P}_\kappa$ a $V$-generic and $\delta$ as above. Let $j\colon L(V_{\lambda+1})\rightarrow L(V_{\lambda+1})$ and define $\mathbb{D}_\lambda:=\mathbb{D}(\lambda,\mathcal{S}\cap\lambda, \langle \delta_n\mid n<\omega\rangle)$. Since Claim~\ref{ClaimLifting} works for a dense collection of $q\in j(\mathbb{D}_\lambda)$ then there is $q\in G_\lambda$ with that property. Using this we can argue as exactly before that $j$ lifts to $  j\colon L(V[G]_{\lambda+1})\rightarrow L(V[G]_{\lambda+1}).$
\end{remark}

\subsection{Open questions}\label{SectionOpenquestionsaboutW}
Let us consider an interesting strengthening of $\mathcal{A}$: 
\begin{definition}
Axiom $\mathcal{A}^{+}$ is the conjunction of the following assertions:
    \begin{enumerate}
\item The scheme $``\mathrm{Ord}$ is Mahlo'' holds;
 \item Suppose that $\delta$ is a stationary-correct superstrong with inaccessible target $\lambda$. Then, $\delta$ is tall with target $\theta$ \textbf{for all}  inaccessible  $\theta\in (\delta,\lambda)$.
 \end{enumerate}
\end{definition}

 An immediate consequence of $\mathcal{A}^{+}$ is the equivalence between supercompactness and $C^{(n)}$-supercompactness for all complexities $n<\omega$. So  $$\text{$\mathcal{A}^{+}+``$There is a supercompact''}$$ is incompatible with Woodin's EEA (see Definition~\ref{EEA})  in a very strong sense. Recall that so did  $\mathcal{A}+``$There is a supercompact'' (Theorem~\ref{AprecludesEEA}).
 \begin{prop}\label{pro: axiomA+}
     Assume $\mathcal{A}^{+}$ holds. Let $\delta$ be supercompact, $\lambda>\delta$ and  $n<\omega$. Then, 
     $\text{$\delta$ is $\lambda$-supercompact if and only if   $\delta$ is $\lambda$-$C^{(n)}$-supercompact.}$
 \end{prop}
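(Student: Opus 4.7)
The direction $(\Leftarrow)$ is immediate from the definitions. The plan for $(\Rightarrow)$ is to mirror Theorem~\ref{AimpliesCn}, exploiting that Clause~(2) of $\mathcal{A}^{+}$ applies to \emph{every} inaccessible in $(\delta, j(\delta))$, not only successor inaccessibles. This extra flexibility lets us place the target of the resulting tall embedding inside $C^{(n)}$.

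First, I would fix an inaccessible $\theta > \lambda$ lying in $C^{(n)}$. The case $n=0$ is trivial; for $n\geq 1$, Clause~(1) of $\mathcal{A}^{+}$ (``$\Ord$ is Mahlo'') applied to the first-order definable club class $C^{(n)}$ yields a regular $\theta > \lambda$ in $C^{(n)}$. Since every regular cardinal in $C^{(1)} \supseteq C^{(n)}$ is automatically a strong limit (by $\Sigma_1$-reflection of the existence of $2^\alpha$), such a $\theta$ is inaccessible.

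Next, fix $j \colon V \to M$ witnessing $\theta^{+}$-supercompactness of $\delta$. Then $j \restriction V_{\delta+1} \in M$, so Proposition~\ref{Prop1extendibility} gives that $M$ sees $\delta$ as stationary-correct superstrong with target $j(\delta)$; the latter is inaccessible in $M$ since it is measurable in $V$ and $V_{j(\delta)}^{M}$ is correct on the relevant initial segment. By elementarity $M \models \mathcal{A}^{+}$, and in $M$ the cardinal $\theta$ is inaccessible with $\delta < \theta < j(\delta)$. Applying Clause~(2) of $\mathcal{A}^{+}$ inside $M$ to this $\theta$ yields $M \models$ ``$\delta$ is tall with target $\theta$''. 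The witnessing $(\delta,\theta)$-extender $E \in M$ is genuinely a $(\delta,\theta)$-$V$-extender because $M^{\theta^{+}} \subseteq M$, and its ultrapower $j_{E}\colon V \to M_{E}$ witnesses tallness of $\delta$ with target $\theta$ in $V$.

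At this point, the passage to $\lambda$-$C^{(n)}$-supercompactness proceeds exactly as in Theorem~\ref{CharacterizingCnsupercompacts}: compose the ultrapower $j' \colon V \to M'$ by a normal measure on $\mathcal{P}_{\delta}(\lambda)$ (which fixes $\theta$ because $\cf(\theta) = \theta > \lambda = \lambda^{<\delta}$) with the ultrapower of $M'$ by an extender $F \in M'$ witnessing that $j'(\delta)$ is tall with target $\theta$; the composition $\iota \colon V \to N$ satisfies $\iota(\delta) = \theta \in C^{(n)}$ and $N^{\lambda} \cap V \subseteq N$. The most delicate step will be verifying the absoluteness of inaccessibility for $j(\delta)$ and $\theta$ between $V$ and $M$ and the transfer of the tallness extender $E$ from $M$ to $V$; both rest on the $\theta^{+}$-closure of $M$.
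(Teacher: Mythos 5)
Your proposal is correct and follows essentially the same route as the paper: invoke ``$\Ord$ is Mahlo'' to find an inaccessible $\theta\in C^{(n)}$ above $\lambda$, take a $\theta^+$-supercompact embedding $j\colon V\to M$, use Proposition~\ref{Prop1extendibility} inside $M$ (via $j\restriction V_{\delta+1}\in M$) to see $\delta$ as stationary-correct superstrong with inaccessible target $j(\delta)$, apply Clause~(2) of $\mathcal{A}^{+}$ in $M$ to get a $(\delta,\theta)$-tall extender, pull it back to $V$ by $\theta^+$-closure, and finish via the composition argument of Theorem~\ref{CharacterizingCnsupercompacts}. The only cosmetic quibble is that $j(\delta)$ is inaccessible in $M$ simply by elementarity of $j$, not because it is measurable in $V$.
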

 \begin{proof}
     By Theorem~\ref{CharacterizingCnsupercompacts} it suffices to show that, for each $n<\omega$, $\delta$ is an enhanced $C^{(n)}$-tall cardinal.  Fix $\delta<\lambda$  and let $\theta\in C^{(n)}$ be inaccessible above $\lambda$ -- this is possible as $\ord$ is Mahlo. Let $j\colon V\rightarrow M$ be a $\theta^+$-supercompact embedding with $\crit(j)=\delta$. Since $M$ thinks that $\delta$ is $(\delta+1)$-extendible and $\theta$ is inaccessible, axiom $\mathcal{A}^{+}$ yields a $(\delta,\theta)$-tall extender $E\in M$ such that $j_E(\delta)=\theta$.  In particular $\delta$ is tall with target $\theta$ in $V$, as sought.
 \end{proof}
The consistency of $\mathcal{A}^{+}$ with very large cardinals is  completely open:
\begin{question}\label{QuestionA+}
    Is $\mathcal{A}^{+}$ consistent with a proper class of extendible cardinals?
\end{question}
Our belief is that the consistency of $\mathcal{A}^{+}$  with (even) a proper class of supercompacts (if possible at all) will require completely new methods. In fact, the consistency of the following weaker configuration is widely open:
\begin{question}
   Is it consistent to have a proper class of supercompact cardinals and that every supercompact cardinal is $C^{(2)}$-supercompact?
\end{question}
An intriguing inner-model-theoretic question is how axiom $\mathcal{A}$ fits with  \emph{Woodin's $\mathrm{Ultimate-}L$ program}. 
Woodin  conjectured (2018) that  EEA is implied by the axiom $V=\mathrm{Ultimate-}L$  and in Theorem~\ref{AprecludesEEA} we showed that $\mathcal{A}$ is incompatible with EEA.\footnote{For the definition of the axiom $V=\mathrm{Ultimate-}L$ see \cite{midrasha}.} Nonetheless we ask: 
\begin{question}
    Does $V=\mathrm{Ultimate-}L$ imply $\mathrm{EEA}$? 
\end{question}
A question of a similar flavor is whether under $V=\mathrm{Ultimate-}L$ every tall cardinal with target past a regular cardinal $\gamma$ is in fact $\gamma$-strong. 
\begin{question}
    Does $V=\mathrm{Ultimate-}L$ yield the equivalence between tall and strong cardinals proved in  \cite{FernandesSchidler}? In general, is the configuration obtained in \cite{FernandesSchidler} compatible with a proper class of supercompact cardinals?
\end{question}
\subsection{Axiom $\mathcal{A}$ and the Ultimate-$L$ conjecture}\label{sec: ultimate L}
We would like to close this section devoting a few words to the connections between axiom $\mathcal{A}$ (resp. axiom  $\mathcal{A}^+$) and \emph{Woodin's $\mathrm{Ultimate-}L$ Conjecture}. Specifically, we would like to show that $\mathcal{A}$ imposes some limitations on the extent to which $\mathrm{Ultimate-}L$ resembles the set-theoretic universe.
\begin{conjecture}[Woodin, The $\mathrm{Ultimate-}L$ conjecture]
    The theory $$\text{$\mathrm{ZFC}+\text{``$\delta$ is an extendible cardinal''}$}$$ proves the existence of an inner model $N$ with:
    \begin{enumerate}
        \item $N$ has the $\delta$-cover, $\delta$-approximation and $\delta$-genericity properties;
        \item $N\models ``V=\mathrm{Ultimate-}L$''.
    \end{enumerate}
\end{conjecture}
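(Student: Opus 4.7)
The plan is to follow Woodin's strategy and construct $N$ as a \emph{weak extender model for the supercompactness of $\delta$}: an inner model such that for every $\lambda>\delta$ there exists a normal fine $\delta$-complete ultrafilter $\mathcal{U}$ on $\mathcal{P}_\delta(\lambda)$ with $\mathcal{P}_\delta(\lambda)\cap N\in\mathcal{U}$ and $\mathcal{U}\cap N\in N$. Woodin's fundamental theorem on weak extender models establishes that any such $N$ automatically satisfies the $\delta$-cover and $\delta$-approximation properties, so two of the three clauses of the conjecture would be delivered for free once a suitable $N$ is exhibited. The objective thus reduces to producing an inner model which is both fine-structural (so that $N\models``V=\mathrm{Ultimate}\text{-}L$'' can be verified) and a weak extender model for the supercompactness of $\delta$.

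First, I would set up a fine-structural construction with $N=L[\vec{E}]$, where $\vec{E}$ is a coherent sequence of partial (short and long) extenders following the $K^c$-style background-certified template, but enlarged to admit long extenders coding normal fine measures on $\mathcal{P}_\delta(\lambda)$ for every $\lambda$. The extendibility of $\delta$ supplies, for each $\lambda$, a background embedding $j\colon V_{\lambda+2}\to V_{j(\lambda)+2}$ with $\crit(j)=\delta$ and $j(\delta)\in C^{(1)}$; the derived normal fine ultrafilter on $\mathcal{P}_\delta(\lambda)$ is the data to be certified and placed onto $\vec{E}$ coherently, in a manner analogous to how total measures are certified below a measurable in the classical $L[\vec{E}]$-construction.

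Next, I would verify that the candidate $N$ is indeed a weak extender model: for each $\lambda$ the background embedding concentrates on $\mathcal{P}_\delta(\lambda)\cap N$ because $V_\lambda^N$ is closed under the pertinent Skolem functions, and $\mathcal{U}\cap N$ is computed within $N$ from the coherence of $\vec{E}$. Woodin's Universality Theorem would then transfer large-cardinal structure from $V$ into $N$ above $\delta$; in particular, each extendibility embedding of $V$ restricts to one of $N$, yielding both $N\models``\delta$ is extendible'' and the $\delta$-genericity property via the extender algebra coded at stage $\delta$ inside $N$. Combined with the fine-structural definition of $N$, this is meant to deliver $N\models``V=\mathrm{Ultimate}\text{-}L$''.

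The main obstacle --- and the reason this conjecture remains widely open --- will be iterability. Standard background-certified constructions are known to produce iterable mice up to the level of a Woodin limit of Woodins, but no general iterability theorem is presently available for the long-extender sequences required at the supercompact level; the usual copying and resurrection machinery is not known to survive the interaction between short and long extenders under countable Skolem hulls. I expect the real crux to consist of either (a) a genuinely new iterability framework for long-extender $\vec{E}$-sequences that preserves both short and long extender information under copying, or (b) an alternative path through the $\hod$ Conjecture identifying $N$ with $\hod$ of a suitably chosen generic extension and bypassing the explicit fine-structural construction entirely. Any complete solution will almost certainly subsume or settle the $\hod$ Conjecture along the way, and none of the currently available techniques suffice for either route.
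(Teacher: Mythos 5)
This statement is Woodin's $\mathrm{Ultimate\text{-}}L$ Conjecture, which the paper does not prove and does not claim to prove: it is quoted as an open conjecture (due to Woodin) solely in order to discuss, in \S\ref{sec: ultimate L}, what its truth \emph{would} imply in combination with axiom $\mathcal{A}$. So there is no proof in the paper against which your argument can be matched, and any purported proof would be a major breakthrough far beyond the scope of this manuscript.

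Your proposal is, by your own admission, not a proof but a programme. The construction you outline --- a background-certified $L[\vec{E}]$-style model with long extenders certified by the extendibility embeddings, verified to be a weak extender model for the supercompactness of $\delta$, with $\delta$-cover and $\delta$-approximation obtained from Woodin's theorem on weak extender models and $\delta$-genericity from an extender algebra --- is essentially the known strategy, and every step after the first paragraph rests on unproved hypotheses. Concretely: (i) there is no comparison or iterability theory for coherent sequences containing long extenders, so the candidate $N$ cannot currently be shown to exist, to be well-defined, or to be fine-structural; (ii) even granting existence, verifying $N\models ``V=\mathrm{Ultimate\text{-}}L$'' is not a formality --- the axiom $V=\mathrm{Ultimate\text{-}}L$ is a specific statement about $\Sigma_2$-truth being captured in $\mathrm{HOD}^{L(A,\mathbb{R})}$ for universally Baire $A$, and nothing in your sketch connects the $L[\vec{E}]$-construction to that definition; (iii) the $\delta$-genericity property is not known to follow from weak-extender-model status and would need its own argument. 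Your final paragraph concedes exactly these gaps (iterability, or a detour through the $\mathrm{HOD}$ Conjecture), which is an accurate assessment of the state of the art, but it also means the submission establishes nothing: the conjecture remains open, and the paper treats it as such.
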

Let us assume that the following  conjectures made by Woodin are true: 
\begin{enumerate}
    \item[$(\mathcal{C}_1)$] ZFC+$V=\mathrm{Ultimate-}L\vdash\mathrm{EEA}$;
    \item[$(\mathcal{C}_2)$] The $\mathrm{Ultimate-}L$ conjecture.
\end{enumerate}
Denote by $V$ the model for $\ZFC+\mathcal{A}+\mathrm{VP}$ obtained in Theorem~\ref{ConsistencyofA}. Combining $(\mathcal{C}_1)$ and $(\mathcal{C}_1)$ we get an inner model $N\s V$ with the $\delta$-covering, $\delta$-approximation and $\delta$-genericity properties satisfying $\mathrm{EEA}$. The combination of these three  implies that $N$ satisfies  \emph{Woodin's Universality Theorem} \cite{midrasha}. Thus, $N$ inherits all the ($C^{(1)}$-)supercompact/extendible cardinals past $\delta$. 
Let $\kappa$ be, for instance, the first supercompact cardinal past $\delta$. By Theorem~\ref{AprecludesEEA}, $\mathcal{A}$ implies that $\kappa$ is $C^{(1)}$-supercompact and the same holds in $N$ by Woodin's Universality theorem.  Since EEA holds in $N$, $\kappa$ is almost-$C^{(1)}$-extendible  (Theorem~\ref{AprecludesEEA}), hence a $\Sigma_3$-correct cardinal in $N$. But $N$ inherits from $V$ a proper class of supercompact cardinals so one concludes that $N\models ``\kappa$ is a supercompact limit of supercompact cardinals''. 

\smallskip

The above shows that even under the \emph{Ultimate-L Conjecture}, there may still be significant discrepancies between \(\mathrm{Ultimate}-L\) and \(V\). Similar (and indeed more radical) discrepancies arise if axiom \(\mathcal{A}^+\) was compatible with a proper class of extendible cardinals. While this does not endanger Woodin's conjectures, it highlights some limitations regarding how \(\mathrm{Ultimate-}L\) resembles \emph{\(V\) under the \(\mathrm{Ultimate-}L\) conjecture}. We believe these limitations are interesting from both mathematical and philosophical perspectives.

\section{On a question of Gitman and Goldberg}\label{GitmanGoldbergQ}
In private communication \cite{GG}, V. Gitman and G. Goldberg presented us with cardinal-preserving extendible cardinals. Just recently we learnt that this notion is due to V. Gitman and J. Osinski:
\begin{definition}[Gitman and Osinski]
    A cardinal $\kappa$ is called \emph{$\lambda$-cardinal-preserving extendible}  if 
there is an elementary embedding $j\colon V_\lambda\rightarrow M$ with $\crit(j)=\kappa$,  $j(\kappa)>\lambda$ and $\mathrm{Card}^M=\mathrm{Card}\cap M.$ Similarly, $\kappa$ is \emph{cardinal-preserving extendible} if it is {$\lambda$-cardinal-preserving extendible} for all $\lambda>\kappa$.
\end{definition}
This is a \emph{rank-version} of cardinal-preserving elementary embeddings studied by Caicedo and Woodin in \cite{Caicedo}. Recently, Goldberg showed that every cardinal-preserving extendible must be strongly compact \cite{GoldbergCardinalPresenving}. In private communication, Gitman and Goldberg asked the author whether every supercompact cardinal must be cardinal-preserving extendible. 

\begin{theorem}\label{TheoremGG}
   Suppose that $\kappa$ is a supercompact cardinal. Then there is a generic extension where  $\kappa$ is the first supercompact but it is not cardinal-preserving extendible.
\end{theorem}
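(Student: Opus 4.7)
The plan is to destroy the cardinal-preserving extendibility of $\kappa$ via Radin forcing at $\kappa$ while preserving its supercompactness. By passing to an inner model if necessary, I may assume that $\kappa$ is the least supercompact cardinal in $V$. Using Woodin's preparation technique, I will select a sufficiently long measure sequence $\vec{U}$ on $\kappa$ (constructed from a supercompactness embedding of $\kappa$) so that the corresponding Radin forcing $\mathbb{R}_{\vec{U}}$ preserves the supercompactness of $\kappa$; let $G\subseteq \mathbb{R}_{\vec{U}}$ be $V$-generic. Since $|\mathbb{R}_{\vec{U}}|=\kappa$, no new supercompact cardinals appear below $\kappa$ in $V[G]$, so $\kappa$ remains the least supercompact there.

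Next, I reduce ``not cardinal-preserving extendible'' to ``no tall embedding with a limit cardinal target''. Suppose for contradiction that $\kappa$ is cardinal-preserving extendible in $V[G]$. Given any $\lambda$, fix $j\colon V[G]_\lambda\rightarrow M$ witnessing $\lambda$-cardinal-preserving extendibility. Extract from $j$ the $(\kappa,j(\kappa))$-extender $E$ and form the ultrapower $j_E\colon V[G]\rightarrow N_E$. Standard arguments give $j_E(\kappa)=j(\kappa)$ and $N_E^\kappa\s N_E$, so $\kappa$ is tall with target $j(\kappa)$ in $V[G]$. Moreover, $V[G]_\lambda\models``\kappa$ is a (strong) limit cardinal'' (being supercompact), so by elementarity $j(\kappa)$ is a limit cardinal in $M$; cardinal-preservation then promotes this to $V[G]$. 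Hence it suffices to establish that $V[G]$ carries no tall embedding $\iota\colon V[G]\rightarrow N$ with $\crit(\iota)=\kappa$ and $\iota(\kappa)$ a limit cardinal.

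To derive this last assertion, I will argue as follows. The generic $G$ determines a Radin club $C\subseteq\kappa$ whose members carry a characteristic ``Radin signature'' definable from $G$: each $\gamma\in C$ supports, in $V[G]$, a generic filter for the Radin forcing  associated with the lower part of $\vec{U}$ at level $\gamma$. Since $\mathbb{R}_{\vec{U}}$ admits a gap below $\omega_1$ (after the standard Cohen preamble used in Woodin's preparation, or directly since the stem generates the rest), Hamkins' Gap Forcing theorem implies that every such $\iota$ is the lift of a ground-model embedding $\iota_0=\iota\restriction V$, amenable to $V$, with $\iota_0(\kappa)=\iota(\kappa)=:\theta$. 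By elementarity, $\iota(C)\s\theta$ is a club whose members each carry the corresponding Radin signature in $N$; in particular, $N$ believes that $\theta$ itself is the top of a Radin generic $\iota(G)$ for the $\theta$-analogue of $\mathbb{R}_{\vec{U}}$. The closure $N^\kappa\s N$ forces the initial segment of this generic (up to any required level) to live in $V[G]$ and to be $V$-generic for a Radin forcing at $\theta$ based on a measure sequence absent from $V$, since in $V$ the cardinal $\theta$ is merely a limit cardinal (Radin forcing above $\kappa$ is trivial) and no such measure sequence exists. This is the contradiction sought.

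The main obstacle is the last step: making rigorous the incompatibility between ``$\theta$ is a limit cardinal in $V$'' and ``$\theta$ carries a tail Radin generic visible in $V[G]$''. I expect this to rest on the definability of the Radin signature from $G$ together with a careful analysis of how the ground-model embedding $\iota_0$ shifts the measure sequence $\vec{U}$, in the spirit of Mitchell's and Cummings' analyses of Radin extensions. In particular, the closure of $N$ under $\kappa$-sequences must be leveraged to pull enough of the alleged generic at $\theta$ back into $V[G]$ to contradict either the structure of $V$ above $\kappa$ or the uniqueness of the Radin preparation at $\kappa$.
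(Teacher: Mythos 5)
Your setup (Radin forcing at $\kappa$ with a measure sequence long enough to preserve supercompactness, restricting to conditions whose Radin club consists of ground-model measurables) matches the paper's, but the endgame has a genuine gap, and you acknowledge as much. The contradiction you aim for does not exist: if $\iota\colon V[G]\rightarrow N$ is a tall embedding with $\crit(\iota)=\kappa$, then $\iota(G)$ (or its restriction below $\theta=\iota(\kappa)$) is generic over $N$, not over $V$ or $V[G]$, and the measure sequence on $\theta$ lives in $N$ (equivalently in the target of the pulled-back ground-model embedding), not in $V$. That $\theta$ is merely a limit cardinal in $V$ while the target model sees it as carrying a rich measure sequence is exactly the normal state of affairs for tall embeddings --- it is the whole point of tallness that $j(\kappa)$ can be large in $M$ without being large in $V$ --- so ``pulling the generic at $\theta$ back into $V[G]$'' cannot contradict ``the structure of $V$ above $\kappa$.'' No amount of analysis of how $\iota_0$ shifts $\vec U$ will rescue this; a different invariant is needed.

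The missing idea is a combinatorial one. By a theorem of D\v{z}amonja and Shelah, every $\lambda$ in the Radin club $C$ that has countable cofinality in $V[G]$ carries a $\square^*_\lambda$-sequence, and the existence of such a sequence is upward absolute between inner models agreeing on $\lambda^+$. Working directly with a cardinal-preserving $j\colon V[G]_{\kappa+1}\rightarrow M$, pick $\lambda\in j(C)$ with $\cf^M(\lambda)=\omega$ and $\lambda>\kappa$: then $M\models\square^*_\lambda$, cardinal-preservation gives $(\lambda^+)^M=(\lambda^+)^{V[G]}$, so $\square^*_\lambda$ holds in $V[G]$ at a singular cardinal of cofinality $\omega<\kappa$, contradicting the supercompactness of $\kappa$ by Shelah's theorem. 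Note also that your detour through the derived extender ultrapower $N_E$ is counterproductive here: it discards the cardinal-correctness of $M$, which is precisely what makes the absoluteness of $\square^*_\lambda$ go through without assuming the GCH (the paper's separate tallness result, which does pass through $M^\omega\s M$, needs the GCH for exactly this reason, and that hypothesis is not available in the present theorem). Finally, ``$|\mathbb{R}_{\vec U}|=\kappa$, hence no new supercompacts below $\kappa$'' is not a Levy--Solovay application, since the forcing is not small relative to cardinals below $\kappa$; this needs the factorization of Radin forcing below a condition, or simply the observation that the Radin club prevents any $\delta<\kappa$ from being supercompact (indeed measurable with the relevant reflection) in $V[G]$.
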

\begin{proof}
The proof employs Radin forcing. We use the approach and notations of \cite{CumWoo}. Our readers may find a more concise account in  
\cite[\S3.1.1]{PovPhD}.

    Let $u\in\mathcal{U}_\infty$ be a measure sequence whose corresponding Radin forcing $\mathbb{R}_u$ preserves supercompactness of $\kappa$. This result is due to Cummings and Woodin \cite{CumWoo} (see also \cite[Corollary~3.1.21]{PovPhD}). 

    \smallskip

    Set $$A:=\{v\in\mathcal{U}_\infty\cap V_\kappa\mid \text{$\kappa_v$ is measurable in $V$}\}.$$ This is a $\mathcal{F}(u)$-large set. Forcing with $\mathbb{R}_u$ below  $\langle (u,A)\rangle$ yields a generic club $C\s \kappa$ consisting of $V$-measurable cardinals. By virtue of a theorem of D\v{z}amonja and Shelah \cite{DzjaSh}, every cardinal $\lambda\in C$ of countable $V^{\mathbb{R}_u}$-cofinality carries a $\square^*_\lambda$-sequence an the existence of  $\square^*_\lambda$-sequence is absolute between inner models agreeing on $\lambda^+$. For the definition of the $\square^*_\lambda$-principle the reader may want to consult \cite[Definition~2.3]{CumForMag}. 

    \smallskip

  We claim that $$\text{$\langle (u,A)\rangle\forces_{\mathbb{R}_u}``\kappa$ is not $(\kappa+1)$-cardinal-preserving extendible''.}$$ Suppose otherwise and let $G\s \mathbb{R}_u$ a $V$-generic filter with $\langle (u,A)\rangle\in G$ such that $\kappa$ is $(\kappa+1)$-cardinal-preserving extendible in $V[G].$ Note that $\kappa$ is also supercompact in $V[G]$ thanks to our choice of the measure sequence.
  
  Let $j\colon V[G]_{\kappa+1}\rightarrow M$ be an embedding witnessing that $\kappa$ is $(\kappa+1)$-cardinal-preserving extendible. 
 Let $\lambda\in j(C)\cap (E^{j(\kappa)}_\omega)^{V[G]}$ be an $M$-cardinal (hence a $V[G]$-cardinal) greater than $\kappa$. By elementarity, $M$ thinks that $\square^*_\lambda$ holds and since $(\lambda^+)^M=(\lambda^+)^{V[G]}$,  $\square^*_\lambda$ holds in $V[G]$. Since $\cf^M(\lambda)=\omega$, $\cf^{V[G]}(\lambda)=\omega$. Combining these two facts it follows that $\square^*_\lambda$ holds at a singular cardinal of cofinality ${<}\kappa$, which contradicts  the supercompactness of $\kappa$ by a theorem of Shelah (see \cite[Fact~2.5]{CumForMag}). 
\end{proof}
Bearing in mind that every extendible cardinal is cardinal-preserving extendible and that every cardinal-preserving extendible is strongly compact the above yields the next corollary:
\begin{cor}
    It is consistent for the first cardinal-preserving extendible be greater than the first supercompact.
\end{cor}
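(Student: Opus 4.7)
The plan is to derive the corollary from Theorem~\ref{TheoremGG} together with two facts explicitly noted in the excerpt: every extendible cardinal is cardinal-preserving extendible, and every cardinal-preserving extendible cardinal is strongly compact (Goldberg \cite{GoldbergCardinalPresenving}). The strategy will be to start with an extendible cardinal lying above a supercompact, then kill the cardinal-preserving extendibility of the least supercompact by means of Theorem~\ref{TheoremGG}, while preserving everything above.

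I would begin with a ground model $V$ containing an extendible cardinal $\delta$, and let $\kappa<\delta$ be the least supercompact of $V$. To guarantee that no cardinal strictly below $\kappa$ is (or will be) cardinal-preserving extendible, I would first perform a Magidor-style identity-crisis preparation \cite{MagSuper} to arrange that $\kappa$ is simultaneously the least strongly compact cardinal in an intermediate model $V^{*}$; this is a set forcing of size less than $\delta$, so $\delta$ remains extendible in $V^{*}$ by L\'evy--Solovay. I would then invoke Theorem~\ref{TheoremGG} over $V^{*}$ at $\kappa$, obtaining a Radin extension $V^{*}[G]$ in which $\kappa$ remains the least supercompact but is no longer $(\kappa+1)$-cardinal-preserving extendible. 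Because $|\mathbb{R}_u|=\kappa<\delta$, the extendibility of $\delta$ is again preserved, so $\delta$ is still cardinal-preserving extendible in $V^{*}[G]$; in particular at least one cardinal-preserving extendible exists in $V^{*}[G]$ and $\kappa$ is not one.

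The remaining task, and the principal obstacle, is to rule out cardinal-preserving extendible cardinals $\mu<\kappa$ in $V^{*}[G]$. By Goldberg's theorem any such $\mu$ would have to be strongly compact in $V^{*}[G]$; since no $\mu<\kappa$ is strongly compact in $V^{*}$, it is enough to argue that $\mathbb{R}_u$ introduces no new strongly compact cardinals below $\kappa$. I would invoke the $\kappa$-c.c.\ of $\mathbb{R}_u$, together with the fact that conditions in $\mathbb{R}_u$ are hereditarily of size less than $\kappa$, to show by a standard reflection argument that any strong-compactness ultrafilter for $\mu<\kappa$ in $V^{*}[G]$ reflects down to one in $V^{*}$ (cf.\ the preservation machinery developed in \cite{CumWoo}). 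With this final step in hand, the least cardinal-preserving extendible in $V^{*}[G]$ must lie in the interval $(\kappa,\delta]$, yielding the claimed configuration.
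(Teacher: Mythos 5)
Your overall architecture is a faithful unpacking of the paper's one-line derivation: the corollary is indeed obtained from Theorem~\ref{TheoremGG} together with exactly the two facts you quote, and the two things one must supply are (i) a cardinal-preserving extendible above $\kappa$ in the final model (from an extendible $\delta$, preserved because the forcing is small relative to $\delta$), and (ii) the exclusion of cardinal-preserving extendibles below $\kappa$, routed through Goldberg's theorem by arranging that $\kappa$ is also the least strongly compact. Your steps are fine up to minor inaccuracies (for instance $|\mathbb{R}_u|$ is roughly $2^{\kappa}$ rather than $\kappa$, since conditions carry measure-one sets $A\subseteq V_\kappa$ and the measure sequence $u$ itself; this is still below $\delta$, so the L\'evy--Solovay step survives).

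The genuine gap is in your last step. The specific properties of $\mathbb{R}_u$ you invoke are false: Radin forcing is $\kappa^{+}$-cc but not $\kappa$-cc (conditions with distinct lower parts built from different measure sequences with the same critical points are pairwise incompatible, and there are $\kappa$ many lower parts), and conditions are not hereditarily of size ${<}\kappa$. More seriously, there is no ``standard reflection argument'' transferring a strong-compactness ultrafilter for $\mu<\kappa$ from $V^{*}[G]$ down to $V^{*}$. Downward absoluteness of large-cardinal properties through forcing is available only when the forcing is small relative to $\mu$ (L\'evy--Solovay) or admits a gap below $\mu$ (Hamkins' gap-forcing theorem), and $\mathbb{R}_u$ has neither feature for $\mu<\kappa$: it adds new bounded subsets of $\kappa$ cofinally often below $\kappa$ (e.g.\ new $\omega$-sequences converging to points of the generic club above $\mu$), so it admits no closure point in $(\mu,\kappa)$. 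In general, forcing that is large relative to $\mu$ can resurrect measurability and strong compactness, so the claim that $\mathbb{R}_u$ creates no strongly compacts below $\kappa$ must use specific features of $V^{*}$ and of the forcing. A workable repair is to arrange that in $V^{*}$ the failure of strong compactness of each $\mu<\kappa$ is witnessed by objects that $\mathbb{R}_u$ provably preserves --- for instance the non-reflecting stationary sets concentrated on cofinality $\omega$ from the Kimchi--Magidor construction, whose stationarity and non-reflection survive a Prikry-type forcing via the usual factorization of $\mathbb{R}_u$ into a small lower part and a weakly closed upper part with the Prikry property. As written, however, the assertion that no new strongly compact cardinals appear below $\kappa$ is unjustified, and it is precisely the point on which the corollary turns.
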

Yet another related question is whether cardinal-preserving extendible are necessarily extendible cardinals. After a visit to Harvard in the Spring 2024, J. Osinski posed the author this very question. The author showed that if $\delta$ is extendible and $\hod$ is cardinal correct then $\delta$ is cardinal-correct extendible in $\hod$. Combining this with unpublished results with Goldberg, one may show that the first cardinal-correct extendible can be the first strongly compact cardinal. This theorem will appear as a joint work in Osinski's forthcoming Ph.D. dissertation \cite{Osinski}.

\smallskip

  For a class $\mathcal{C}\s \mathrm{Card}$ let us say that a cardinal $\kappa$ is \emph{tall with targets in $\mathcal{C}$} if for each $\lambda>\kappa$ there is an elementary embedding $j\colon V\rightarrow M$ such that $\crit(j)=\kappa$, $j(\kappa)>\lambda$, $M^\kappa\s M$ and $j(\kappa)\in\mathcal{C}$. The next spin off of Theorem~\ref{TheoremGG} yields a prototype model where $\mathcal{A}$ fails:
\begin{theorem}\label{ConsistencyofAdoesnotHold}
	Assume the $\mathrm{GCH}$ holds and that there is a proper class of inaccessible cardinals. In the Radin-like extension of Theorem~\ref{TheoremGG}, $\kappa$ is not a $\mathrm{Lim}$-tall cardinal and thus axiom $\mathcal{A}$ fails.
\end{theorem}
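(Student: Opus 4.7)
The plan is to adapt the $\square^*$-based argument of Theorem~\ref{TheoremGG} to the weaker hypothesis of tallness with a limit-cardinal target. Suppose, for contradiction, that $\kappa$ is $\mathrm{Lim}$-tall in $V[G]$; fix a witness $j\colon V[G]\to M$ with $\crit(j)=\kappa$, $M^{\kappa}\subseteq M$, and $\theta:=j(\kappa)$ a $V[G]$-limit cardinal chosen above an arbitrary pre-specified threshold. By elementarity, $M$ sees $j(C)$ as the generic Radin club at $\theta$ arising from $j(\mathbb{R}_u)$; hence D\v{z}amonja--Shelah applied inside $M$ yields $M\models\square^*_\lambda$ for every $\lambda\in j(C)$ of $M$-countable cofinality.

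The crux is to produce $\lambda\in j(C)\cap(\kappa,\theta)$ that is simultaneously a $V[G]$-cardinal with $\cf^{V[G]}(\lambda)=\omega$ and that satisfies $(\lambda^+)^M=(\lambda^+)^{V[G]}$. Since $\theta$ is a $V[G]$-limit cardinal and the inclusion $M\subseteq V[G]$ forces $\mathrm{Card}^{V[G]}\subseteq\mathrm{Card}^M$, the $V[G]$-cardinals are unbounded in $\theta$. Using ground-model-definability of $V$ in $V[G]$ together with $j\restriction V_\kappa=\id$, the elements of $j(C)$ past $\kappa$ are $j(V)$-measurables sitting inside $V_\theta$ and, by preservation of cardinals under Radin, they remain $V[G]$-cardinals. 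This allows one to pick inside $V[G]$ an increasing $\omega$-sequence $\langle\lambda_n:n<\omega\rangle$ in $j(C)\cap(\kappa,\theta)$ whose supremum $\lambda$ is a $V[G]$-cardinal of $V[G]$-cofinality $\omega$; the closure $M^\omega\subseteq M$ places the sequence in $M$, so $\lambda\in j(C)$ by closedness of $j(C)$ and $\cf^M(\lambda)=\omega$. The harder half, namely $(\lambda^+)^M=(\lambda^+)^{V[G]}$, would be argued by showing that any candidate $V[G]$-bijection $\lambda\to(\lambda^+)^M$ is coded by a $\kappa$-sequence of $M$-elements and hence already lies in $M$, contradicting that $(\lambda^+)^M$ is an $M$-cardinal.

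Once such a $\lambda$ is in hand, any $\square^*_\lambda$-sequence in $M$ remains a $\square^*_\lambda$-sequence in $V[G]$ by absoluteness at the common successor cardinal $(\lambda^+)^M=(\lambda^+)^{V[G]}$. This supplies $\square^*_\lambda$ on a singular $V[G]$-cardinal $\lambda>\kappa$ of $V[G]$-cofinality $\omega<\kappa$, contradicting the supercompactness of $\kappa$ by Shelah's theorem \cite[Fact~2.5]{CumForMag}. The failure of $\mathrm{Lim}$-tallness then entails the failure of $\mathcal{A}$: by the argument of Theorem~\ref{AimpliesCn}, $\mathcal{A}$ together with the supercompactness of $\kappa$ would force $\kappa$ to be tall at every successor inaccessible above $\kappa$, and successor inaccessibles are limit cardinals. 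The main obstacle is condition $(\lambda^+)^M=(\lambda^+)^{V[G]}$: unlike Theorem~\ref{TheoremGG}, here $j$ only delivers $\kappa$-closure rather than cardinal correctness, so one must carefully rule out collapses of $M$'s $\lambda^+$ by bijections living in $V[G]\setminus M$.
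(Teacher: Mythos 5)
You have correctly set up the contradiction and correctly identified the crux, but you have not closed it, and the route you sketch for the key step would not work. The proposal hinges on establishing $(\lambda^+)^M=(\lambda^+)^{V[G]}$ for your chosen $\lambda\in j(C)$ of countable cofinality, and you suggest arguing that any $V[G]$-bijection $\lambda\to(\lambda^+)^M$ "is coded by a $\kappa$-sequence of $M$-elements and hence already lies in $M$". Since $\lambda>\kappa$, such a bijection is a $\lambda$-sized object and cannot be captured by the $\kappa$-closure of $M$; indeed, $\kappa$-closure alone gives no control over cardinal arithmetic at $\lambda$, which is exactly the obstacle you flag in your last sentence. So as written the proof has a genuine gap at its central step.

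The missing idea is the $\mathrm{GCH}$ hypothesis in the statement, which your proposal never invokes. The paper's proof computes, using $\cf^{V[G]}(\lambda)=\omega$, the $\mathrm{GCH}$ (which survives into $V[G]$ and into $M$ by elementarity), and the fact that $M^\omega\subseteq M$ forces ${}^{\omega}\lambda\cap M={}^{\omega}\lambda\cap V[G]$:
$$(\lambda^+)^M=(\lambda^{\aleph_0})^M=(\lambda^{\aleph_0})^{V[G]}=(\lambda^+)^{V[G]}.$$
(The point is that if $(\lambda^+)^M$ had $V[G]$-cardinality $\lambda$, then $V[G]$ would see a surjection of $\lambda$ onto ${}^{\omega}\lambda$, contradicting K\"onig's theorem at a cardinal of cofinality $\omega$.) With this equality in hand, the downward transfer of $\square^*_\lambda$ from $M$ to $V[G]$ and the appeal to Shelah's theorem go through exactly as you describe, as does the final reduction from failure of $\mathrm{Lim}$-tallness to failure of $\mathcal{A}$. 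Everything else in your write-up (locating $\lambda$ as a limit of an $\omega$-sequence from $j(C)$, the D\v{z}amonja--Shelah input, the concluding step via Theorem~\ref{AimpliesCn}) matches the paper's argument.
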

\begin{proof}
	We follow the notations of Theorem~\ref{TheoremGG}. The claim is that $$\text{$\langle (u,A)\rangle\forces_{\mathbb{R}_u}``\kappa$ is not $C^{(1)}$-tall''.}$$
	Suppose otherwise and let $j\colon V[G]\rightarrow M$ be an elementary embedding with $\crit(j)=\kappa$, $j(\kappa)\in \mathrm{Lim}$ and $M^\kappa\s M$. Since $j(\kappa)$ is a limit cardinal in $V[G]$ there is a $V[G]$-cardinal $\lambda\in j(C)$ of countable $M$-cofinality (hence of countable $V[G]$-cofinality). Now, since the GCH holds and $M^\omega\s M$, $$(\lambda^+)^M=(\lambda^{\aleph_0})^M=(\lambda^{\aleph_0})^{V[G]}=(\lambda^+)^{V[G]}.$$ The contradiction is now achieved  as in the previous theorem. 
\end{proof}
The above suggests that all the embeddings witnessing supercompactness of $\kappa$ in the Radin-like extension arise from measures on $\mathcal{P}_\kappa(\lambda)$.  Succinctly putted, it suggests that supercompactness is the strongest large-cardinal notion preserved by Radin forcing. 
This yields two interesting problems:
\begin{question}
    Is there a Radin-like extension where $\kappa$ is both supercompact and tall with targets in  $\mathrm{Succ}$?
\end{question}

\begin{question}
What kind of tall embeddings may exist in a Radin extension?
\end{question}


\begin{thebibliography}{HMP22}

\bibitem[AC00]{AptCum}
Arthur~W Apter and James Cummings.
\newblock Identity crises and strong compactness.
\newblock {\em The Journal of Symbolic Logic}, 65(4):1895--1910, 2000.

\bibitem[AC01]{AptCum2}
Arthur~W. Apter and James Cummings.
\newblock Identity crises and strong compactness: {II}. strong cardinals.
\newblock {\em Archive for Mathematical Logic}, 40(1), 2001.

\bibitem[AG98]{AptGit}
Arthur~W Apter and Moti Gitik.
\newblock The least measurable can be strongly compact and indestructible.
\newblock {\em The Journal of Symbolic Logic}, 63(4):1404--1412, 1998.

\bibitem[AS83]{AbrShe}
Uri Abraham and Saharon Shelah.
\newblock Forcing closed unbounded sets.
\newblock {\em The Journal of Symbolic Logic}, 48(3):643--657, 1983.

\bibitem[AS97a]{AptSheII}
Arthur Apter and Saharon Shelah.
\newblock Menas result is best possible.
\newblock {\em Transactions of the American Mathematical Society},
  349(5):2007--2034, 1997.

\bibitem[AS97b]{AptShe}
Arthur~W. Apter and Saharon Shelah.
\newblock On the strong equality between supercompactness and strong
  compactness.
\newblock {\em Transactions of the American Mathematical Society},
  349(1):103--128, 1997.

\bibitem[Bag12]{Bag}
Joan Bagaria.
\newblock ${C^{(n)}}$-cardinals.
\newblock {\em Archive for Mathematical Logic}, 51(3):213--240, 2012.

\bibitem[BT11]{Broo}
Andrew~D. Brooke-Taylor.
\newblock Indestructibility of {V}op{\v{e}}nka's {P}rinciple.
\newblock {\em Archive for Mathematical Logic}, 50(5):515--529, 2011.

\bibitem[Cai07]{Caicedo}
Andr{\'e}s~Eduardo Caicedo.
\newblock Cardinal preserving elementary embeddings.
\newblock In {\em Logic Colloquium}, volume~35, pages 14--31, 2007.

\bibitem[CFM01]{CumForMag}
James Cummings, Matthew Foreman, and Menachem Magidor.
\newblock Squares, scales and stationary reflection.
\newblock {\em Journal of Mathematical Logic}, 1(01):35--98, 2001.

\bibitem[CL17]{CoxLucke}
Sean Cox and Philipp L\"ucke.
\newblock Characterizing large cardinals in terms of layered posets.
\newblock {\em Annals of Pure and Applied Logic}, 168(5):1112--1131, 2017.

\bibitem[Cra12]{Cramer}
S.~Cramer.
\newblock {\em Inverse limit reflection and the structure of
  {$L(V_{\lambda+1})$}}.
\newblock PhD thesis, UC Berkeley, 2012.

\bibitem[Cum10]{CumHandBook}
James Cummings.
\newblock Iterated forcing and elementary embeddings.
\newblock In {\em Handbook of Set Theory}, pages 775--883. Springer, 2010.

\bibitem[CW]{CumWoo}
James Cummings and W.~Hugh Woodin.
\newblock A book on {R}adin forcing.
\newblock Unpublished book.

\bibitem[DF14]{DimonteFriedman}
Vincenzo Dimonte and Sy-David Friedman.
\newblock Rank-into-rank hypotheses and the failure of {GCH}.
\newblock {\em Archive for Mathematical Logic}, 53:351--366, 2014.

\bibitem[DS95]{DzjaSh}
Mirna D{\v{z}}amonja and Saharon Shelah.
\newblock On squares, outside guessing of clubs and ${I}_{<f}[\lambda]$.
\newblock {\em Fundamenta Mathematicae}, 148(2):165--198, 1995.

\bibitem[For09]{ForHandbook}
Matthew Foreman.
\newblock Ideals and generic elementary embeddings.
\newblock In {\em Handbook of set theory}, pages 885--1147. Springer, 2009.

\bibitem[FS21]{FernandesSchidler}
Gabriel Fernandes and Ralf Schindler.
\newblock Tall cardinals in extender models.
\newblock {\em Notre Dame Journal of Formal Logic}, 62(3):425--454, 2021.

\bibitem[GG23]{GG}
Victoria Gitman and Gabriel Goldberg.
\newblock {\em Private communication}, 2023.

\bibitem[Git10]{Gitik-handbook}
Moti Gitik.
\newblock Prikry-type forcings.
\newblock In {\em Handbook of Set Theory. {V}ols. 1, 2, 3}, pages 1351--1447.
  Springer, Dordrecht, 2010.

\bibitem[Gol21]{GoldbergCardinalPresenving}
Gabriel Goldberg.
\newblock A note on cardinal preserving embeddings.
\newblock {\em arXiv preprint arXiv:2103.00072}, 2021.

\bibitem[Ham01]{HamGap}
Joel~David Hamkins.
\newblock Gap forcing.
\newblock {\em Israel Journal of Mathematics}, 125(1):237--252, 2001.

\bibitem[HMP22]{HMP}
Yair Hayut, Menachem Magidor, and Alejandro Poveda.
\newblock Identity crisis between supercompactness and {V}op\v{e}nka's
  principle.
\newblock {\em The Journal of Symbolic Logic}, 87(2):626--648, 2022.

\bibitem[Kan09]{Kan}
Akihiro Kanamori.
\newblock {\em The higher infinite}.
\newblock Springer Monographs in Mathematics. Springer-Verlag, Berlin, second
  edition, 2009.
\newblock Large cardinals in Set Theory from their beginnings, Paperback
  reprint of the 2003 edition.

\bibitem[KM80]{KimMag}
Yechiel Kimchi and Menachem Magidor.
\newblock The independence between the concepts of compactness and
  supercompactness.
\newblock Circulated manuscript, 1980.

\bibitem[KY12]{KY}
Bernhard K{\"o}nig and Yasuo Yoshinobu.
\newblock Kurepa trees and namba forcing.
\newblock {\em The Journal of Symbolic Logic}, 77(4):1281--1290, 2012.

\bibitem[Lav07]{LavGroundModel}
Richard Laver.
\newblock Certain very large cardinals are not created in small forcing
  extensions.
\newblock {\em Annals of Pure and Applied Logic}, 149(1-3):1--6, 2007.

\bibitem[Mag76]{MagSuper}
Menachem Magidor.
\newblock How large is the first strongly compact cardinal? {Or} a study on
  identity crises.
\newblock {\em Annals of Mathematical Logic}, 10(1):33--57, 1976.

\bibitem[McA71]{McAloon}
Kenneth McAloon.
\newblock Consistency results about ordinal definability.
\newblock {\em Annals of Mathematical Logic}, 2(4):449--467, 1971.

\bibitem[Osing]{Osinski}
Jonathan Osinski.
\newblock {\em TBD}.
\newblock PhD thesis, Hamburg University, forthcoming.

\bibitem[Pov20]{PovPhD}
Alejandro Poveda.
\newblock {\em Contributions to the theory of large cardinals through the
  method of forcing}.
\newblock PhD thesis, Universitat de Barcelona, 2020.

\bibitem[Pov24]{PovOmega}
Alejandro Poveda.
\newblock Two results on extendible cardinals.
\newblock {\em Proceedings of the American Mathematical Society},
  152(06):2669--2681, 2024.

\bibitem[Rei07]{Reitz}
Jonas Reitz.
\newblock The ground axiom.
\newblock {\em The Journal of Symbolic Logic}, 72(4):1299--1317, 2007.

\bibitem[Tsa12]{TsaPhD}
Konstantinos Tsaprounis.
\newblock {\em Large cardinals and resurrection axioms}.
\newblock PhD thesis, Universitat de Barcelona, 2012.

\bibitem[Tsa14]{TsaChain}
Konstantinos Tsaprounis.
\newblock Elementary chains and ${C^{(n)}}$-cardinals.
\newblock {\em Archive for Mathematical Logic}, 53(1-2):89--118, 2014.

\bibitem[Tsa18]{Tsan}
Konstantinos Tsaprounis.
\newblock On {$C^{(n)}$}-extendible cardinals.
\newblock {\em The Journal of Symbolic Logic}, 83(3):1112--1131, 2018.

\bibitem[Woo11]{WoodinSuitableII}
W.~Hugh Woodin.
\newblock Suitable extender models {II}: beyond {$\omega$}-huge.
\newblock {\em Journal of Mathematical Logic}, 11(02):115--436, 2011.

\bibitem[Woo17]{midrasha}
W.~Hugh Woodin.
\newblock In search of {U}ltimate-{$L$} the 19th midrasha mathematicae
  lectures.
\newblock {\em Bulletin of symbolic logic}, 23(1):1--109, 2017.

\bibitem[Woo24]{WooBookSuperscompacts}
W.~Hugh Woodin.
\newblock {\em Strategic-extender models for supercompactness}.
\newblock Book in preparation, 2024.

\end{thebibliography}
\end{document}